\definecolor{darkblue}{rgb}{0.0, 0.0, 0.55}
\newcommand\N{\mathbb N}
\newcommand\Z{\mathbb Z}
\newcommand\Q{\mathbb Q}
\newcommand\R{\mathbb R}
\newcommand\x{\ushort X}
\newcommand\g{\ushort g}
\newcommand\al\alpha
\newcommand\la\lambda
\newcommand\de\delta
\newcommand\ep\varepsilon
\newcommand\ph\varphi
\newcommand\si\sigma
\newcommand\ta\tau
\newcommand\ps\psi
\newcommand\Ph\Phi
\newcommand\Ps\Psi
\renewcommand\O{\mathcal O}
\newcommand\m{\mathfrak m}
\newcommand\bigdotcup{\mathop{\dot\bigcup}}
\DeclareMathOperator\hess{Hess}
\DeclareMathOperator\conv{conv}
\DeclareMathOperator\sgn{sgn}
\DeclareMathOperator\st{st}
\DeclareMathOperator\tr{tr}
\DeclareMathOperator\id{id}
\DeclareMathOperator\extr{extr}
\DeclareMathOperator\convbd{convbd}
\DeclareMathOperator\lasserre{Lasserre}
\theoremstyle{definition}
\newtheorem{thm}{Theorem}
\newtheorem{cor}[thm]{Corollary}
\newtheorem{pro}[thm]{Proposition}
\newtheorem{rem}[thm]{Remark}
\newtheorem{ex}[thm]{Example}
\newtheorem{lem}[thm]{Lemma}
\newtheorem{df}[thm]{Definition}
\newtheorem{nt}[thm]{Notation}
\title[Lasserre relaxations and pure states]
{On the exactness of Lasserre relaxations and\\pure states over real closed fields}
\author[T.-L. Kriel]{Tom-Lukas Kriel}
\address{Fachbereich Mathematik und Statistik, Universität Konstanz, 78457 Konstanz, Germany}
\email{tom-lukas.kriel@uni-konstanz.de}
\author[M. Schweighofer]{Markus Schweighofer}
\address{Fachbereich Mathematik und Statistik, Universität Konstanz, 78457 Konstanz, Germany}
\email{markus.schweighofer@uni-konstanz.de}
\subjclass[2010]{Primary 13J30, 14P10, 46L30, 52A20; Secondary 12D15, 52A20, 52A41, 90C22, 90C26}
\date{September 13, 2018}
\keywords{moment relaxation, Lasserre relaxation, pure state, basic closed semialgebraic set, positive polynomial, sum of squares, polynomial optimization, semidefinite programming, linear matrix inequality, spectrahedron, semidefinitely representable set}
\begin{document}
\begin{abstract}
Consider a finite system of non-strict polynomial inequalities with solution set $S\subseteq\R^n$.
Its Lasserre relaxation of degree $d$ is a certain natural linear matrix inequality in the original variables
and one additional variable for each nonlinear monomial of degree at most $d$. It defines a spectrahedron
that projects down to a convex semialgebraic set containing $S$. In the best case, the projection equals the convex hull of $S$.
We show that this is very often the case for sufficiently high $d$
if $S$ is compact and ``bulges outwards'' on the boundary of its convex hull.

Now let additionally a polynomial objective function $f$ be given, i.e., consider a polynomial optimization problem.
Its Lasserre relaxation of degree $d$ is now a semidefinite program.
In the best case, the optimal values of the polynomial optimization problem and its relaxation agree.
We prove that this often happens if $S$ is compact and $d$ exceeds some bound that depends on the description of
$S$ and certain characteristicae of $f$ like the mutual distance of its global minimizers on $S$.
\end{abstract}

\maketitle
\newpage
\tableofcontents
\newpage


\section{Introduction}

\subsection{Basic notation}
Throughout the article, $\N$ and $\N_0$ denote the set of positive and nonnegative integers,
respectively. All rings will be commutative with $1$ and, correspondingly, ring homomorphisms always map $1$
to $1$ by definition. We fix $n\in\N_0$ and denote by $\x:=(X_1,\dots,X_n)$ a tuple of $n$ variables. For any ring
$A$ in which $0\ne1$, we write \[A[\x]:=A[X_1,\dots,X_n]\] for the polynomial ring in these variables over $A$.
For $\al\in\N_0^n$, we denote
\[|\al|:=\al_1+\ldots+\al_n\qquad\text{and}\qquad \x^\al:=X_1^{\al_1}\dotsm X_n^{\al_n}.\]
If $A$ is a ring and $p=\sum_\al a_\al\x^\al\in A[\x]$
with all $a_\al\in A$, the degree of $p$ is defined as
\[\deg p:=\max\{|\al|\mid a_\al\ne0\}\]
if $p\ne0$ and $\deg p:=-\infty$ if $p=0$. 
For each ring $A$ and all $d\in\N_0$, we set
\[A[\x]_d:=\{p\in A[\x]\mid\deg p\le d\}.\]
In particular, $\R[\x]_d$ is the real vector space of all real polynomials of degree at most $d$.
We call polynomials of degree at most $1$ \emph{linear} polynomials, of degree at most $2$ \emph{quadratic} polynomials
and so on. We call a polynomial a \emph{linear form} if \emph{all of its monomials} are linear.

\subsection{Systems of polynomial inequalities and polynomial optimization problems}
Fix $\g=(g_1,\ldots,g_m)\in\R[\x]^m$, consider the \emph{basic closed semialgebraic} set \cite[Chapter 2]{pd1}
\[S(\g):=\{x\in\R^n\mid g_1(x)\ge0,\ldots,g_m(x)\ge0\}.\]
Given $\g$, one would like to understand $S(\g)$ as well as possible. This is the task of
\emph{solving systems of polynomial inequalities}. Moreover, given in addition $f\in\R[\x]$,
one would like to compute the infimum (or the supremum) of $f$ on $S(\g)$ and extract, if the infimum is a minimum,
the corresponding minimizers. This is the task of \emph{polynomial optimization}.
Both mentioned tasks are very difficult problems in general. Lasserre and others developed a by now well-established
and successful symbolic-numeric technique to approach these problems \cite{l1,l2,lau,mar}.

\subsection{The idea behind Lasserre relaxations}
The idea is to fix a so-called \emph{relaxation degree $d$} and to generate from given polynomial inequalities new families
of polynomial inequalities of degree at most $d$ by multiplying the $g_i$ with squares of polynomials of appropriate degree.
These families of \emph{polynomial} inequalities are then turned into families of \emph{linear} inequalities
by substituting the non-linear monomials of degree at most $d$ with new variables. The gist of the method is that this family of
linear inequalities corresponds to a single \emph{linear matrix inequality} which is called the
\emph{degree $d$ Lasserre relaxation} of the given system of polynomial inequalities.
In the case of polynomial optimization, the objective polynomial $f$ is also linearized in the same way and together with the
linear matrix inequality now forms a \emph{semidefinite program} which is called the \emph{degree $d$ Lasserre relaxation} of
the given polynomial optimization problem. For the purpose of this article, we will take a shortcut and will not formally
define the Lasserre relaxations themselves but instead directly define in a rather abstract way
the relevant convex supersets $S_d(\g)$ of $S(\g)$
(the projection of the \emph{spectrahedron} defined by the linear matrix inequality) and the optimal value
$\lasserre_d(f,\g)$ (the optimal value of the semidefinite program). For the study of this article, it is hence not even
necessary to know what is a linear matrix inequality or a semidefinite program. We refer however the
readers that are not familiar with the Lasserre relaxation but want to understand why it is such a natural procedure to \cite{ks,lau,mar}. Occasionally, the Lasserre relaxation is also called \emph{moment relaxation} but we prefer to name it after Jean Lasserre
who played a leading role in the invention and investigation of this natural procedure \cite{l1,l2}.

\subsection{Formal definitions related to Lasserre relaxations}\label{subs:formal}
We now fix a relaxation degree $d\in\N_0$. In order to define $S_d(\g)$ and $\lasserre_d(f,\g)$ as promised, we first introduce the
\emph{$d$-truncated quadratic module} $M_d(\g)$ associated to $\g$ by
\[
M_d(\g):=
\left\{\sum_{i=0}^m\sum_jp_{ij}^2g_i\mid p_{ij}\in\R[\x],\deg\left(p_{ij}^2g_i\right)\le d\right\}\subseteq M(\g)\cap\R[\x]_d
\]
where $g_0:=1\in\R[\x]$. Here the inner sum over $j$ is a finite sum of arbitrary length. Moreover, it will be
of utmost importance that the degree of each individual $p_{ij}^2g_i$ is restricted by $d$
(and not just the degree of the outer sum over $i$ which would make the inclusion into an equality).
Note that $M_d(\g)$ consists of polynomials of degree at most $d$ that are very obviously nonnegative on the whole of $S(\g)$.
Now we set
\begin{gather*}
S_d(\g):=\left\{(L(X_1),\ldots,L(X_n))~\middle|~
\begin{split}
L\colon\R[\x]_d\to\R\text{ linear},\\
L(M_d(\g))\subseteq\R_{\ge0},\\
L(1)=1
\end{split}
\right\}\subseteq\R^n
\end{gather*}
and for $f\in\R[\x]_d$
\begin{gather*}
\lasserre_d(f,\g):=\inf
\left\{L(f)~\middle|~
\begin{split}
L\colon\R[\x]_d\to\R\text{ linear},\\
L(M_d(\g))\subseteq\R_{\ge0},\\
L(1)=1
\end{split}
\right\}\in\{-\infty\}\cup\R\cup\{\infty\}
\end{gather*}
where the infimum is taken in the ordered set $\{-\infty\}\cup\R\cup\{\infty\}$. Considering the evaluations of degree at most $d$
polynomials at points of $S(\g)$, it is clear that $S_d(\g)$ contains $S(\g)$ and
$\lasserre_d(f,\g)$ is a lower bound of the infimum of $f$ on $S(\g)$ for all $d\in\N_0$.

\subsection{Our contribution towards solving systems of polynomial inequalities}\label{subs:towardssolving}
We recall that a subset $S$ of a real vector space is called \emph{convex} if
\[\la x+(1-\la)y\in S\]
for all $x,y\in S$ and $\la\in[0,1]$.
There is always a smallest convex subset containing $S$. It is called the \emph{convex hull} of $S$ and we denote it by $\conv S$.
Since each $S_d(\g)$ is obviously a convex set, it is easy to see that
\[S(\g)\subseteq\conv S(\g)\subseteq\quad\ldots\quad\subseteq S_3(\g)\subseteq S_2(\g)\subseteq S_1(\g)\subseteq S_0(\g).\]
The best one can hope for is thus that \[\conv S(\g)=S_d(\g)\]
for some $d$ (unfortunately, we do not prove anything about the dependance of $d$ on $\g$).
In Corollary \ref{lasserreexact} below, we prove that this best possible case occurs (i.e., the Lasserre relaxation eventually becomes
exact) if $S(\g)$ is compact
and mild additional hypotheses are met:
\begin{itemize}
\item In fact, $M(\g):=\bigcup_{d \in \N} M_d(\g)$ has to be \emph{Archimedean}, see Definition \ref{defqm}, Remark \ref{genmodule} and Proposition~\ref{archmodulechar} below.
This implies compactness of $S(\g)$ but is in general not equivalent to it. However, up to
changing the description $\g$ of $S(\g)$ which is easy
in most practical cases, it \emph{is} equivalent to it as explained in Remark \ref{notmuchstronger} below.
\item We assume $S(\g)$ to have nonempty interior near its convex boundary in the sense of Definition \ref{nearconvexboundary}.
This is for example satisfied if the semialgebraic set $S(\g)$ is locally full-dimensional \cite[Definition 2.8.11]{bcr} at each of its points.
\item Roughly speaking, at each point of $S(\g)$ which lies on the boundary of its convex hull, the smooth algebraic
hypersurfaces that 
``confine'' $S(\g)$ must have positive curvature (where the normal is chosen in a way that respects the convexity of $S(\g)$),
in particular $S(\g)$ satisfies a kind of second order strict convexity condition locally on the boundary of its convex hull.
See Definitions~\ref{nearconvexboundary} and \ref{dfconcave} as well as Remarks \ref{whatmeansquasiconcavity}
and \ref{mildnatural} for the exact condition and more details.
With the right description $\g$ of $S(\g)$, this often just means that $S(\g)$ has
``no flat parts'' on the boundary of its convex hull.
\end{itemize}
This last ``no flat borders'' condition is the most severe since it can easily be violated if one of the $g_i$ is linear (i.e., $\deg g_i\le1$)
which happens quite often in the applications. As shown in \cite[Example 4.10]{ks}, it can however not be omitted. The authors have
however proven a variant of Corollary \ref{lasserreexact} that allows such linear inequalities and
more generally $\g$-sos-concave inequalities \cite[Main Theorem 4.8]{ks} in the case where $S(\g)$ is convex. 
For convex $S(\g)$, our Corollary~\ref{lasserreexact} is thus weaker than \cite[Main Theorem 4.8]{ks}.
Whereas \cite{ks} builds up on the seminal work of Helton and Nie \cite{hn1,hn2} based on representations of
positive matrix polynomials, we use in this work completely different techniques, namely the technique of pure states we will discuss below. Note that we do not know how to prove \cite[Main Theorem 4.8]{ks} using pure states, and conversely we did not succeed
to prove Corollary \ref{lasserreexact} in this article using positivity certificates for matrix polynomials. 
The results and techniques of this article and of \cite{ks} thus nicely complement each other. While we pushed in \cite{ks}
the technique of Helton and Nie \cite{hn1,hn2} to its limits, we develop in this article a completely new approach through pure states.

\subsection{Our contribution to polynomial optimization}\label{subs:our}
Next let us explain, what we can prove about solving polynomial optimization problems. To this end, fix some objective
$f\in\R[\x]_d$. Again, it is easy to see that
\[\inf\{f(x)\mid x\in S(\g)\}\ge\quad\ldots\quad\ge\lasserre_{d+1}(f,\g)\ge\lasserre_d(f,\g).\]
The best one can hope for is thus that \[\inf\{f(x)\mid x\in S(\g)\}=\lasserre_d(f,\g)\]
for some $d$. It follows easily from Scheiderer's theorem \cite[Corollary 3.6]{s1} (see also \cite[Theorem 9.5.3]{mar})
which we reprove in Corollary \ref{putinarzeros} below
that this best possible case occurs (i.e., the Lasserre relaxation eventually becomes
exact) if $S(\g)$ is compact and nonempty, $f$ has only finitely many global minimizers on $S(\g)$ which lie all in the
interior of $S(\g)$ and if the following mild additional hypotheses are satisfied:
\begin{itemize}
\item We suppose again that $M(\g)$ is \emph{Archimedean}.
\item We suppose that the second order sufficient condition for a local minimum is satisfied at each of the
global minimizers of $f$ on $S(\g)$, i.e., the Hessian of $f$ is positive definite at these points. (Note that
the second order necessary condition for a local minimum already guarantees that the Hessian is
positive \emph{semi}definite at these minimizers.)
\end{itemize}
Our contribution is now that Theorem \ref{optimizationbound} is to the best of our knowledge the first known
result about at which degree the Lasserre relaxation of a polynomial
optimization problem becomes exact, namely we prove that the corresponding $d$ depends on
\begin{itemize}
\item the description $\g$ of $S(\g)$,
\item an upper bound on the degree of $f$,
\item an upper bound on the absolute value of the coefficients of $f$,
\item the number $k$ of the finitely many distinct minimizers $x_1,\dots,x_k$ of $f$ on $S(\g)$
(all lying in the interior of $S(\g)$),
\item a lower bound on the mutual distance of these minimizers and on their distance to $\R^n\setminus S(\g)$ and
\item a positive lower bound of
\[S\setminus\{x_1,\dots,x_k\}\to\R_{>0},\ x\mapsto\frac{f(x)-\min\{f(\xi)\mid\xi\in S(\g)\}}{u(x)}\]
where $u$ is a certain canonical nonnegative polynomial with real zeros $x_1,\dots,x_n$
(this bound will be very small if $f$ takes values close to \[\min\{f(\xi)\mid\xi\in S(\g)\}\] at
\emph{local nonglobal} minimizers on $S(\g)$ or if $f$ does not grow quickly enough
around its global minimizers on $S(\g)$).
\end{itemize}
In other words, for fixed $\g$ and for a fixed threshold for the algebraic and geometric complexity of an objective polynomial,
we prove that there exists a degree $d$ in which the Lasserre relaxation of the polynomial optimization problem becomes exact for
all objective functions $f$ whose complexity does not succeed this threshold. Interestingly, our result seems to suggest that
Lasserre's method for polynomial optimization might have difficulties with some of the phenomena that
classical optimization algorithms also struggle with like for example local minimizers at which the objective function
has a value close but different from the global minimum.

\subsection{Qualitative versus quantitative bounds}
Neither in Corollary \ref{lasserreexact} concerning systems of polynomial inequalities
nor in Theorem \ref{optimizationbound} concerning polynomial optimization problems,
we prove anything concrete on the way, the relaxation degree $d$ of exactness depends on $\g$ and the complexity of $f$.
This is because in the proof we work with infinitesimal numbers rather than
with real quantities so that our analysis gives the qualitative rather than the quantitative behavior.
It is conceivable to find a quantitative version of the proofs and more concrete bounds by working with real
numbers only instead of real closed fields. At least for general $\g$, this would however be an enormous effort.

\subsection{Pure states}
The concept of pure states stems from functional analysis and has recently entered commutative algebra \cite{bss}.
Pure states on rings often just correspond to ring homomorphisms into the real numbers
and therefore can be seen as real points of a variety \cite[Propositions 4.4. and 4.16]{bss}.
In this article, the variety concerned is just affine space.
Although, this has not been investigated thoroughly and it seems speculative, we feel that pure states on ideals of a ring
often just correspond to a certain kind up of ``blowup'' at the subvariety defined by the ideal, i.e., the subvariety is replaced
by the directions pointing out from it. All this should be understood rather in a scheme theoretic manner, i.e., a kind of
``multiplicity information'' is present. In our case, we will remove the finite subvariety of global minimizers
and replace it by second order directional derivatives pointing out from it. These ideas are not new
\cite[Theorem 7.11]{bss} but here we add another important idea: We extend the technique of pure states (still real valued!)
to work over \emph{real closed extension fields of $\R$} (see Subsection \ref{subs:rcf} below) rather than over $\R$ itself.
More precisely, we work over the subring
of finite elements of a real closed extension field of $\R$. This leads to new phenomena which we have to deal with.
It will turn out that the standard part map from this ring to the reals will play a very important role (see Theorem \ref{dicho} below).

\subsection{Positive polynomials}\label{subs:pospol}
Most of the rest of the article is written from the perspective that is dual to Lasserre relaxations, namely from the perspective of sums of
squares representations of positive polynomials. Viewed from this angle, our first main result is
Theorem \ref{putinarzerosdegreebound} from which Theorem \ref{optimizationbound} follows. It is a generalization
of Putinar's Positivstellensatz \cite[Lemma 4.1]{put}
where the polynomial to be represented is allowed to have (finitely many) zeros (of a certain kind)
and which provides at the same time qualitative (but not quantitative) degree bounds.
This result generalizes both Putinar's Positivstellensatz with degree bounds (a qualitative version of which follows already
from the ideas in \cite{pre} and \cite[Chapter 8]{pd1}, see \cite{ns} for a quantitative version) and Scheiderer's generalization of Putinar's Positivstellensatz to polynomials with zeros \cite[Corollary 3.6]{s1}. Due to our new proof technique,
these latter two results appear in our work as Corollaries \ref{putinardegreebound}  and \ref{putinarzeros}.
Our second main result on positive polynomials is Corollary \ref{linearstabilitybound}
from which Corollary \ref{lasserreexact} follows. The proofs of this representation theorem for linear polynomials and
of the preparatory Lemmata \ref{prep1}, \ref{lagrange2} and \ref{finitecontact} are very subtle.


\section{Prerequisites}

In order to make this article more accessible for readers from the diverse backgrounds such as
optimization, numerical analysis, real algebraic geometry and convexity, we collect in this section the necessary
tools and methods and present them in way that is tailored towards our needs.

\subsection{Notation}\label{subs:notation}

For a subset $S$ of $\R^n$, we write $\overline S$ and $S^\circ$ for its closure and its interior, respectively.
Moreover, $\partial S=\overline S\setminus S^\circ$ denotes its boundary.
For $\ep\in\R_{\ge0}$ and $x\in\R^n$, we denote by
\[B_\ep(x):=\{y\in\R^n\mid\|x-y\|<\ep\}\]
the \emph{open ball of radius $\ep$ centered at $x$} (for $\ep=0$ it is empty). Consequently,
\[\overline{B_\ep(x)}:=\{y\in\R^n\mid\|x-y\|\le\ep\}\]
is the \emph{closed ball of radius $\ep$ centered at $x$} for $\ep\in\R_{>0}$.
 If $f$ is a polynomial that can be evaluated on
a set $S$, we simply say
\[f\ge0\text{ on }S\]
to express that $f(x)\ge0$ for all $x\in S$. If $A$ and $B$ are subsets of an additively written abelian group (e.g., the additive
group of a ring or a vector space), then we use suggestive notations like
\begin{align*}
A+B&:=\{a+b\mid a\in A,b\in B\}\qquad\text{and}\\
\sum A&:=\{a_1+\ldots+a_k\mid k\in\N_0,a_1,\ldots,a_k\in A\}.
\end{align*}
We use similar suggestive notations with respect to the multiplication in a ring or scalar multiplication in a vector space.
If $A$ is a ring, then
$A^2$ will often stand for the set
\[A^2:=\{a^2\mid a\in A\}\]
of squares in $A$ and consequently
\[\sum A^2=\{a_1^2+\ldots+a_k^2\mid k\in\N_0,a_1,\ldots,a_k\in A\}\]
for the set of sums of squares in $A$.
This could conflict with the common notation
\[A^m:=\underbrace{A\times\ldots\times A}_{m\text{ times}}\]
for the $m$-fold Cartesian product but from the context it should always be clear what we mean.
Another source of confusion could be that we also use the notation $IJ$ from commutative algebra
for ideals $I$ and $J$ of a ring $A$ to denote their product, i.e.,
\[IJ:=\sum IJ\]
where the right hand side is written in our suggestive notation.
Correspondingly, we write $I^2$ for the product of the ideal $I$ with itself, i.e.,
\[I^2:=II:=\sum II\]
and similarly for higher powers of the ideal $I$. If $N\in\Z$, we often write $N$ and actually mean its image under the unique
ring homomorphism $\Z\to A$.

\subsection{Quadratic modules}

\begin{df}\label{defqm}
Let $A$ be a ring and $M\subseteq A$. Then $M$ is called a \emph{quadratic module} of $A$
if $\{0,1\}\subseteq M$, $M+M\subseteq M$ and $A^2M\subseteq M$.
\end{df}

\begin{df}\label{bamu}
Let $A$ be a ring and $M$ a quadratic module of $A$. For $u\in A$, we set
\[B_{(A,M,u)}:=\{a\in A\mid\exists N\in\N:Nu\pm a\in M\}\]
and call its elements with respect to $M$ by $u$ \emph{arithmetically bounded} elements of $A$.
If $u=1$, then we write $B_{(A,M,u)}:=B_{(A,M)}$ and omit the specification ``by $u$''.
The quadratic module $M$ is called \emph{Archimedean} if $B_{(A,M)}=A$.
\end{df}

\begin{pro}\label{bammodule}
Suppose $\frac12\in A$ and $M$ is a quadratic module of $A$. Then $B_{(A,M)}$ is a subring of $A$ such that
\[a^2\in B_{(A,M)}\implies a\in B_{(A,M)}\]
for all $a\in A$ and $B_{(A,M,u)}$ is a
$B_{(A,M)}$-submodule of $A$ for each $u\in\sum A^2$.
\end{pro}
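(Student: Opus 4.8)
The plan is to isolate the one genuinely nontrivial point and treat everything else as routine bookkeeping with the quadratic module axioms. Write $B:=B_{(A,M)}$. First I would record three elementary facts, all immediate from $\{0,1\}\subseteq M$, $M+M\subseteq M$, $A^2M\subseteq M$ and $\frac12\in A$: (i) $\sum A^2\subseteq M$, hence $\N_0\subseteq M$ (since $1\in M$, so $A^2=A^2\cdot 1\subseteq A^2M$); (ii) $M$ is closed under multiplication by $\frac1{2^k}$ for every $k\in\N_0$, because $\frac12 c=\bigl(\frac12\bigr)^2(c+c)\in A^2M$ for $c\in M$; (iii) $B$ is closed under multiplication by $\frac12$, because by (i) and (ii) one has $N\pm\frac c2=\frac12\bigl(N+(N\pm c)\bigr)\in\frac12 M\subseteq M$ whenever $N\pm c\in M$. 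The identities $(N_a+N_b)\pm(a+b)=(N_a\pm a)+(N_b\pm b)$ and $N\pm(-a)=N\mp a$ then show at once that $B$ is an additive subgroup of $A$ containing $0$ and $1$. Combining (iii) with $ab=\bigl(\frac{a+b}2\bigr)^2-\bigl(\frac{a-b}2\bigr)^2$ (whose two squared arguments lie in $B$ by (iii)), closure of $B$ under multiplication reduces to the single implication $a\in B\Rightarrow a^2\in B$; the displayed implication of the statement is the converse and I would verify it separately.

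The heart of the matter, and the step I expect to be the main obstacle, is: if $N\pm a\in M$ with $N\in\N$, then $N^2\pm a^2\in M$ after possibly enlarging $N$. The difficulty is precisely that $M$ is only a quadratic module, so one cannot simply multiply $N-a$ by $N+a$; the remedy is to multiply by a \emph{square} instead. Since $(N\pm a)^2\in A^2$, both $(N-a)(N+a)^2$ and $(N+a)(N-a)^2$ lie in $A^2M\subseteq M$, and their sum equals $2N(N^2-a^2)$, so $2N(N^2-a^2)\in M$. If we first replace $N$ by a power of $2$ that is at least $N$ (harmless, since the difference lies in $\N_0\subseteq M$), then $2N$ is a power of $2$, so (ii) gives $N^2-a^2\in M$; and $N^2+a^2=\frac12\bigl((N-a)^2+(N+a)^2\bigr)\in M$ as well. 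Hence $N^2\pm a^2\in M$, i.e.\ $a^2\in B$. Conversely, if $a^2\in B$, say $N-a^2\in M$ with $N\in\N$, then adding the squares $(1\pm a)^2=1\pm 2a+a^2\in M$ gives $(N+1)\pm 2a\in M$, so $2a\in B$ and therefore $a=\frac12\cdot 2a\in B$ by (iii). This settles the subring claim together with the displayed implication.

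For the last assertion, fix $u\in\sum A^2$ and put $B_u:=B_{(A,M,u)}$. That $B_u$ is an additive subgroup is checked verbatim as for $B$, and $0\in B_u$ because $u\in\sum A^2\subseteq M$. For closure under multiplication by an element $c\in B$, I would use that $B$ is now known to be a subring containing $\frac12$ and $1$, so $c=s^2-t^2$ with $s:=\frac{c+1}2\in B$ and $t:=\frac{c-1}2\in B$; as $B_u$ is a group, it then suffices to show $b^2a\in B_u$ whenever $b\in B$ and $a\in B_u$. Pick $K\in\N$ with $K-b^2\in M$ (possible since $b^2\in B$ by the first part) and $N\in\N$ with $Nu\pm a\in M$. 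Then $b^2(Nu-a)\in A^2M\subseteq M$ and $b^2(Nu+a)\in A^2M\subseteq M$, while $(K-b^2)u\in M$ because $u$ is a sum of squares and $K-b^2\in M$; adding $N$ copies of this last relation to each of the first two yields $NKu-b^2a\in M$ and $NKu+b^2a\in M$, i.e.\ $NK\cdot u\pm b^2a\in M$, whence $b^2a\in B_u$ and therefore $ca=s^2a-t^2a\in B_u$. All remaining verifications are the routine quadratic-module manipulations indicated above; the only place where an actual idea enters is the square trick yielding $2N(N^2-a^2)\in M$.
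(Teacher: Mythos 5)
Your proof is correct, and it is considerably more self-contained than the paper's, which proves only the implication $a^2\in B_{(A,M)}\Rightarrow a\in B_{(A,M)}$ directly and delegates the subring and submodule claims to a citation of a proposition in the Burgdorf--Scheiderer--Schweighofer reference. You instead verify everything from the quadratic-module axioms: the preliminary facts (i)--(iii), reduction of multiplicative closure via $ab=\bigl(\tfrac{a+b}2\bigr)^2-\bigl(\tfrac{a-b}2\bigr)^2$, and the ``square trick'' $(N-a)(N+a)^2+(N+a)(N-a)^2=2N(N^2-a^2)\in M$ followed by division by a power of $2$ are all standard and carried out correctly, as is the submodule argument reducing to $b^2a$ via $c=s^2-t^2$ with $s=\tfrac{c+1}2$, $t=\tfrac{c-1}2$. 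One small comparison worth noting: for the implication $a^2\in B_{(A,M)}\Rightarrow a\in B_{(A,M)}$ you take the two-step route of deriving $(N+1)\pm 2a\in M$ from $(1\pm a)^2$ and then halving, whereas the paper uses the slicker one-line identity
\[
N\pm a=(N-1)-a^2+\Bigl(\tfrac12\pm a\Bigr)^2+3\Bigl(\tfrac12\Bigr)^2,
\]
which exhibits $N\pm a$ directly as a sum of elements of $M$ without any division step. Both are correct; the paper's identity avoids even invoking closure of $B_{(A,M)}$ under multiplication by $\tfrac12$ for this part. What your version buys is a complete, reference-free proof of the full statement, at the cost of some extra bookkeeping that the paper outsources.
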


\begin{proof}
If $N\in\N$ with $(N-1)-a^2\in M$, then
\[N\pm a=(N-1)-a^2+\left(\frac12\pm a\right)^2+3\left(\frac12\right)^2\in M.\]
The rest follows from \cite[Proposition 3.2(d)]{bss}.
\end{proof}

\begin{rem}\label{genmodule}
If $A$ is a ring and $G\subseteq A$, then \[\sum_{g\in G\cup\{1\}}\sum A^2g\] is the smallest quadratic module
of $A$ containing $G$. It is called the quadratic module \emph{generated} by $G$ (in $A$). If $\g=(g_1,\dots,g_m)\in A^m$ is a tuple,
then the quadratic module generated by $\{g_1,\ldots,g_m\}$ equals
\[\sum_{i=0}^m\sum A^2g_i\]
where we set $g_0:=1\in A$ and
we simply call it the quadratic module generated by $\g$. For $\g\in\R[\x]^m$, we 
introduce the notation $M(\g)$ for the quadratic module generated by $\g$ in $\R[\x]$ so that we obviously have
\[M(\g)=\bigcup_{d\in\N_0}M_d(\g)\]
where $M_d(\g)$ is the $d$-truncated quadratic module associated to $\g$ introduced in Subsection \ref{subs:formal} above.
\end{rem}

\begin{pro}\label{archmodulechar}
Let $M$ be a quadratic module of $\R[\x]$. Then the following are equivalent:
\begin{enumerate}[(a)]
\item $M$ is Archimedean.
\item There is some $N\in\N$ such that $N-(X_1^2+\ldots+X_n^2)\in M$.
\item There are $m\in\N$ and $\g\in(\R[\x]_1\cap M)^m$
such that the polyhedron $S(\g)$ is nonempty and compact.
\item For each $f\in\R[\x]_1$, there is some $N\in\N$ such that $N+f\in M$.
\end{enumerate}
\end{pro}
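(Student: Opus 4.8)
The plan is to establish the trivial implications (a)~$\Rightarrow$~(b) and (a)~$\Rightarrow$~(d), then (b)~$\Rightarrow$~(a), (d)~$\Rightarrow$~(c) and (c)~$\Rightarrow$~(a); together these close all four statements into a single equivalence. The workhorse throughout is Proposition~\ref{bammodule} applied to $A=\R[\x]$, i.e.\ that $B_{(\R[\x],M)}$ is a subring of $\R[\x]$ containing the square root of every square it contains. I will use freely that $\R_{\ge0}\subseteq M$ (a nonnegative real $c$ equals $(\sqrt c)^2\cdot1\in\R[\x]^2M$), hence $\N\subseteq M$, that every sum of squares, in particular every $X_j^2$, lies in $M$, and that $\R\subseteq B_{(\R[\x],M)}$, since for $r\in\R$ and an integer $N>|r|$ both $N\pm r$ are nonnegative reals and thus in $M$.

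For \emph{(a)~$\Rightarrow$~(b)} and \emph{(a)~$\Rightarrow$~(d)}, Archimedeanness means $B_{(\R[\x],M)}=\R[\x]$, so $X_1^2+\dots+X_n^2\in B_{(\R[\x],M)}$ and every $f\in\R[\x]_1$ lies in $B_{(\R[\x],M)}$, which is exactly what (b) and (d) assert. For \emph{(b)~$\Rightarrow$~(a)}: given $N$ with $N-(X_1^2+\dots+X_n^2)\in M$, adding back the sum of squares $\sum_{i\ne j}X_i^2\in M$ shows $N-X_j^2\in M$ for each $j$; since also $N+X_j^2\in M$, we get $X_j^2\in B_{(\R[\x],M)}$, hence $X_j\in B_{(\R[\x],M)}$ by Proposition~\ref{bammodule}. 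As $B_{(\R[\x],M)}$ is a subring containing $\R$ and all the $X_j$, it equals $\R[\x]$, so $M$ is Archimedean. For \emph{(d)~$\Rightarrow$~(c)} (assuming $n\ge1$; if $n=0$ just take the tuple $(1)$, whose solution set is the single point $\R^0$): applying (d) to $X_j$ and to $-X_j$ produces $a_j,b_j\in\N$ with $a_j+X_j\in M$ and $b_j-X_j\in M$ for $j=1,\dots,n$; the $2n$-tuple $\g'$ of these linear polynomials lies in $(\R[\x]_1\cap M)^{2n}$, and $S(\g')=\prod_{j=1}^n[-a_j,b_j]$ is a compact box containing the origin, so (c) holds.

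Finally, \emph{(c)~$\Rightarrow$~(a)} is the only step that is not pure formality, and it is where I expect the essential input to be needed. Let $\ell\in\R[\x]_1$ be arbitrary. Since $P:=S(\g)$ is nonempty and compact, $\ell$ is bounded on $P$, so there is $N\in\N$ with $N\pm\ell\ge0$ on $P$. I then invoke the classical affine Farkas lemma (equivalently, linear programming duality): since $P=S(\g)$ with all $g_i$ linear is nonempty, every linear polynomial nonnegative on $P$ is a nonnegative linear combination of $1,g_1,\dots,g_m$. Writing $N+\ell=c_0+\sum_i c_ig_i$ and $N-\ell=c_0'+\sum_i c_i'g_i$ with all coefficients in $\R_{\ge0}$, and noting $c_0\in M$ and $c_ig_i=(\sqrt{c_i})^2g_i\in\R[\x]^2M\subseteq M$ (here using $g_i\in M$, which is part of hypothesis (c)), we obtain $N\pm\ell\in M$, so $\ell\in B_{(\R[\x],M)}$. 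Since this holds for every $\ell\in\R[\x]_1$ and $B_{(\R[\x],M)}$ is a subring of $\R[\x]$ containing $\R[\x]_1$, it equals $\R[\x]$, i.e.\ $M$ is Archimedean.

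The main obstacle is precisely this last step: all the other implications are bookkeeping with the arithmetic-boundedness subring from Proposition~\ref{bammodule}, whereas (c)~$\Rightarrow$~(a) relies on the linear Positivstellensatz for polyhedra (Farkas' lemma) to upgrade mere boundedness of the coordinate functions on $P$ to actual membership of $N\pm X_j$ in $M$. The one subtlety to watch there is that Farkas' lemma needs $P\ne\emptyset$ rather than just $P$ compact — which is exactly why nonemptiness is part of hypothesis (c) — while compactness only serves to make every linear polynomial bounded on $P$.
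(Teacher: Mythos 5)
Your proof is correct and complete, and since the paper itself only cites references (\cite[Lemma 5.1.13]{pd1}, \cite[Corollary 5.2.4]{mar}, \cite[Proposition 2.7]{ks}) rather than giving an argument, there is nothing in-text to compare against; your chain of implications is the standard one, with Proposition~\ref{bammodule} doing the bookkeeping and the affine Farkas lemma supplying the one genuine input in (c)~$\Rightarrow$~(a). Your closing remark correctly isolates the role of nonemptiness versus compactness in that step.
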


\begin{proof}
This is well known (see for example \cite[Lemma 5.1.13]{pd1} and \cite[Cor. 5.2.4]{mar}).
A short proof can be found in \cite[Proposition 2.7]{ks}.
\end{proof}

\begin{rem}\label{prestel}
There are deep criteria on when a quadratic module is Archimedean which are important for applications.
We will not need them here.
Therefore we mention only one of them \cite[Theorem 6.3.6, Corollary 6.3.7]{pd1} and refer to \cite[Chapter 6]{pd1} for the general case:
For any quadratic module $M$ of $\R[\x]$, the following are equivalent:
\begin{enumerate}[(a)]
\item There are $g,h\in M$ with compact $S(g,h)$.
\item $M$ is Archimedean.
\end{enumerate}
\end{rem}

\begin{rem}\label{notmuchstronger}
For $n\ge2$, there are examples of $\g\in\R[\x]^m$
with compact (even empty) $S(\g)$ such that $M(\g)$ is not Archimedean
(see \cite[Example~7.3.1]{mar} or \cite[Example~6.3.1]{pd1}). However,
if one knows a big ball containing $S(\g)$, it suffices to add its defining
quadratic polynomial to $\g$ by Proposition \ref{archmodulechar}(b).
That is why for many practical purposes, the Archimedean
property of $M(\g)$ is not much stronger than the compactness of $S(\g)$.
\end{rem}

\subsection{Convex sets and cones}
In Subsection \ref{subs:towardssolving}, we have already reminded the reader about convex sets and convex hulls.
If $S$ is a convex set, then a point $x\in S$ is called an \emph{extreme point} of $S$ if there are no
$y,z\in S$ such that $y\ne z$ and $x=\frac{y+z}2$. We call $S$ a \emph{(convex) cone}
if $0\in S$, $S+S\subseteq S$ and $\R_{\ge0}S\subseteq S$.
We call sets of the form \[\{x\in\R^n\mid f(x)\ge0\}\]
where $f\in\R[\x]_1$ \emph{affine half-spaces}.
By \cite[Theorem 11.5]{roc}, the closure of the convex hull of $S$ is the intersection over all half-spaces containing $S$
(where the empty intersection is understood to be $\R^n$), i.e.,
\[\overline{\conv S}=\{x\in\R^n\mid\forall f\in\R[\x]_1:(\text{$f\ge0$ on $S$}\implies f(x)\ge0)\}.\]
Here we can omit the closure if $S$ is compact since
the closure of a compact subset of $\R^n$ is again compact by \cite[Theorem 17.2]{roc}.

\subsection{Pure states}

\begin{df}\label{defunit}
Let $C$ be a cone in the real vector space $V$ and $u\in V$.
Then $u$ is called a \emph{unit} for $C$ (in $V$) if for every $x\in V$ there is some
$N\in\N$ with $Nu+x\in C$.
\end{df}

\begin{rem}\label{linhullrem}
If $u$ is a unit for the cone $C$ in the real vector space $V$, then $u\in C$ and $C-C=V$.
\end{rem}

\begin{df}\label{defstate}
Let $V$ be a real vector space, $C\subseteq V$ and $u\in V$. A \emph{state}
of $(V,C,u)$ is a linear function $\ph\colon V\to\R$ satisfying
$\ph(C)\subseteq\R_{\ge0}$ and $\ph(u)=1$. We refer to the set
$S(V,C,u)\subseteq\R^V$ of all states of $(V,C,u)$ as the \emph{state space}
of $(V,C,u)$. It is a convex subset of the vector space $\R^V$ of all real-valued functions on $V$.
We call an extreme point of this convex set a \emph{pure state} of $(V,C,u)$.
\end{df}

\begin{rem}
In the previous definition, a pure state is thus an extreme point of an affine slice of the dual cone of $C$. The slice is given by intersecting with the affine hyperplane
consisting of linear forms on $V$ evaluating to $1$ at $u$. The idea will be to investigate $C$ by first investigating its state space.
\end{rem}

\begin{lem}\label{autolin}
Let $C$ be a cone in the real vector space $V$ satisfying $V=C-C$ and suppose
$\ph\colon V\to\R$ is a group homomorphism
from the additive group of $V$ into the additive group of the reals with $\ph(C)\subseteq\R_{\ge0}$. Then
$\ph$ is linear, i.e., $\ph(\la x)=\la x$ for all $\la\in\R$ and $x\in V$.
\end{lem}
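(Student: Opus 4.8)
The plan is to separate the claim into its algebraic and its order-theoretic content. First I would record the purely algebraic fact that any additive map $\ph\colon V\to\R$ is automatically $\Q$-linear: induction on $\ph(x+y)=\ph(x)+\ph(y)$ gives $\ph(nx)=n\ph(x)$ for $n\in\N_0$, then $\ph(-x)=-\ph(x)$ extends this to $n\in\Z$, and from $m\ph(\tfrac1m x)=\ph(x)$ one gets $\ph(qx)=q\ph(x)$ for all $q\in\Q$ and $x\in V$. At this point the hypothesis $\ph(C)\subseteq\R_{\ge0}$ has not yet been used; it is exactly what is needed to upgrade $\Q$-homogeneity to $\R$-homogeneity.

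The key step is a squeezing argument on a fixed $x\in C$ and a fixed real $\la\ge0$. For every rational $q$ with $0\le q\le\la$ we have $(\la-q)x\in C$, since $C$ is a cone and $\la-q\ge0$; hence $0\le\ph\bigl((\la-q)x\bigr)=\ph(\la x)-q\ph(x)$, i.e. $\ph(\la x)\ge q\ph(x)$. Symmetrically, for every rational $q'\ge\la$ we have $(q'-\la)x\in C$ and therefore $\ph(\la x)\le q'\ph(x)$. Letting $q\uparrow\la$ and $q'\downarrow\la$ through $\Q$ and using continuity of $t\mapsto t\ph(x)$ yields $\ph(\la x)=\la\ph(x)$ for all $x\in C$ and all $\la\ge0$.

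It remains to extend this from $C$ to all of $V$ and from $\la\ge0$ to all of $\R$. Given $x\in V$, the hypothesis $V=C-C$ lets us write $x=c_1-c_2$ with $c_1,c_2\in C$; for $\la\ge0$, additivity of $\ph$ together with the previous step gives $\ph(\la x)=\ph(\la c_1)-\ph(\la c_2)=\la\ph(c_1)-\la\ph(c_2)=\la\ph(x)$. For $\la<0$ one applies this to $-x$ and $-\la>0$ and uses $\ph(-y)=-\ph(y)$. Combined with additivity, this is the asserted $\R$-linearity. I do not expect a genuine obstacle here; the only point that needs a little care is that the sandwich in the key step must not use the (a priori unknown) sign of $\ph(x)$, which is why one squeezes with rationals tending to $\la$ rather than with a fixed pair $q<q'$. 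It is worth noting that the positivity hypothesis cannot be dropped: a $\Q$-linear but non-$\R$-linear group endomorphism of $\R$ exists by a Hamel basis argument, so some order input such as $\ph(C)\subseteq\R_{\ge0}$ is indispensable.
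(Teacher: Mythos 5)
Your proof is correct and follows essentially the same route as the paper's: establish $\Q$-homogeneity from additivity, reduce via $V=C-C$ to the case $x\in C$, and then squeeze $\ph(\la x)$ between $q\ph(x)$ and $q'\ph(x)$ using the positivity of $\ph$ on $C$ and rationals tending to $\la$. The only cosmetic difference is that the paper's sandwich handles all real $\la$ (including negative) in one stroke by allowing arbitrary rationals $r\le\la\le s$, whereas you first treat $\la\ge0$ and then pass to $\la<0$ via $\ph(-y)=-\ph(y)$; this changes nothing of substance.
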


\begin{proof}
Let $x\in V$. We have to show $\ph(\la x)=\la x$ for all $\la\in\R$. We know it for all $\la\in\Z$ and thus even for all $\la\in\Q$.
Because of Remark \ref{linhullrem}, we can suppose WLOG that $x\in C$. From $\ph(\R_{\ge0}x)\subseteq\ph(C)\subseteq\R_{\ge0}$,
we get easily the two inequalities in
\[r\ph(x)=\ph(rx)\le\ph(\la x)\le\ph(sx)=s\ph(x)\]
for all $r,s\in\Q$ with $r\le\la\le s$. Letting $r$ and $s$ tend to
$\la$, it follows that $\la\ph(x)\le\ph(\la x)\le\la\ph(x)$ as desired.
\end{proof}

\begin{thm}\label{conemembershipunitextreme}
Suppose $u$ is a unit for the cone $C$ in the real vector space $V$ and
let $x\in V$. Then the following are equivalent:
\begin{enumerate}[\rm(a)]
\item $\forall\ph\in\extr S(V,C,u):\ph(x)>0$
\item $\forall\ph\in S(V,C,u):\ph(x)>0$
\item $\exists N\in\N:x\in\frac1Nu+C$
\item $x$ is a unit for $C$.
\end{enumerate}
\end{thm}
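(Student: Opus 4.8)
The plan is to establish the cycle of implications (c) $\Rightarrow$ (d) $\Rightarrow$ (b) $\Rightarrow$ (a), and then close the loop with the hard implication (a) $\Rightarrow$ (c). The three easy steps are essentially formal. For (c) $\Rightarrow$ (d): if $x = \frac1N u + c$ with $c \in C$, then for any $y \in V$ pick $M \in \N$ with $Mu + y \in C$ (using that $u$ is a unit); then $MNx + y = Mu + MNc + y = (Mu + y) + MNc \in C$, so $x$ is a unit. For (d) $\Rightarrow$ (b): if $x$ is a unit then by Remark \ref{linhullrem} we have $x \in C$, and moreover for any state $\ph$ and any $\la \in \R_{>0}$ there is $N$ with $N u - \la x \in C$; wait—more directly, I would argue that a unit cannot be mapped to $0$ by a state: if $\ph(x) = 0$, take any $v \in V$ with $\ph(v) < 0$ (such $v$ exists unless $\ph \equiv 0$, impossible since $\ph(u)=1$), then $Nx + v \in C$ for some $N$ gives $0 > \ph(v) = \ph(Nx+v) \ge 0$, a contradiction; since $x \in C$ forces $\ph(x) \ge 0$, we get $\ph(x) > 0$. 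The implication (b) $\Rightarrow$ (a) is trivial since $\extr S(V,C,u) \subseteq S(V,C,u)$.

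The main obstacle is (a) $\Rightarrow$ (c), which is the Eidelheit–Kakutani / Effros–Handelman–Shen-type separation argument at the heart of the pure state machinery. The idea: suppose for contradiction that $x \notin \frac1N u + C$ for every $N \in \N$, equivalently $Nx - u \notin NC = C$ for all $N$, i.e. $x - \frac1N u \notin C$ for all $N$. I want to produce a pure state $\ph$ with $\ph(x) \le 0$, contradicting (a). First I would use a separation theorem (Hahn–Banach, in the algebraic/finite-dimensional-slice form, since $u$ being a unit makes $C$ "large" — its span is all of $V$ and it has an algebraic interior point, namely $u$) to separate the point $x$ from the set $\frac{1}{N}u + C$ for a suitable $N$, or rather to separate $x$ from $C$ "modulo the ray through $u$": concretely, find a linear $\la \colon V \to \R$ with $\la(C) \subseteq \R_{\ge 0}$, $\la(u) > 0$ (normalize to $\la(u) = 1$, so $\la$ is a state), and $\la(x) \le 0$. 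This uses that $u$ is an order unit so the "quotient" ordered vector space is a nontrivial separation is available. Then I would invoke the Krein–Milman theorem: the state space $S(V,C,u)$, being a convex set, and being compact in a suitable weak topology (the product topology on $\R^V$, where it is closed and bounded coordinatewise by $\pm$ the "order-unit norm", so compact by Tychonoff), is the closed convex hull of its extreme points. Hence the linear functional "evaluation at $x$" attains its minimum over $S(V,C,u)$ at an extreme point $\ph$, giving $\ph(x) \le \la(x) \le 0$, contradicting (a).

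I expect the delicate points to be: (i) checking that the state space is genuinely compact in the product topology — one needs that $u$ being a unit bounds $|\ph(y)|$ uniformly over states by the smallest $N$ with $Nu \pm y \in C$, so each coordinate lives in a fixed compact interval; (ii) the Hahn–Banach extension to get the initial state with $\ph(x) \le 0$, which requires verifying that the sublinear/convexity data is set up so that no extension is forced to make $\ph(x) > 0$ — this is exactly where the hypothesis $x \notin \frac1N u + C$ for all $N$ enters, ensuring $0$ is not in the algebraic interior of $C - \R_{\ge 0} x$ relative to the $u$-direction; and (iii) making sure Krein–Milman applies, i.e. that $\extr S(V,C,u)$ is nonempty and that evaluation at $x$ is continuous — both follow from the compactness in (i). I would be a little careful to state whether I invoke Krein–Milman directly or the Bauer maximum principle (a convex function on a compact convex set attains its max at an extreme point), applied to $\ph \mapsto -\ph(x)$; the latter is cleaner here.
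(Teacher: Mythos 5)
Your proof is correct and is precisely the ``alternative proof'' the paper leaves as an exercise combining the basic separation theorem, the Banach-Alaoglu theorem, and the Krein-Milman theorem from Holmes; the paper's primary argument is a citation to Goodearl's Corollary 6.4, with Lemma~\ref{autolin} supplying the automatic linearity needed to translate from monotone group homomorphisms to linear states. On the separation step you flag as delicate, the cleanest route is to separate $x$ from $D:=\bigcup_{N\in\N}\bigl(\frac1N u+C\bigr)$: this set is convex (it is a union of an increasing chain of convex sets, since $u\in C$), has $u$ in its algebraic interior because $u$ does for $C$, and any nonzero $\la$ with $\la(C)\subseteq\R_{\ge0}$ satisfies $\inf\la(D)=\inf_N\frac1N\la(u)=0$, so separating $x\notin D$ from $D$ yields $\la(x)\le 0$ in one shot (alternatively, accept $\la_N(x)\le\frac1N\la_N(u)$ from separating off each $\frac1N u+C$ and pass to a limit state via the compactness you establish in point (i)); either way, your Bauer/Krein-Milman step then lands on a pure state $\ph$ with $\ph(x)\le 0$, contradicting (a).
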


\begin{proof}
Due to Lemma \ref{autolin}, this is \cite[Corollary 6.4]{goo}. See also \cite[Corollary 2.7]{bss}.
Alternatively, the proof is an exercise combining the ``basic separation theorem'' 
\cite[Page 15, §4B, Corollary]{hol}, the Banach-Alaoglu theorem \cite[Page 70, §12D, Corollary 1]{hol}
and the Krein-Milman theorem \cite[Page 74, §13B, Theorem]{hol}.
\end{proof}

\begin{cor}\label{conemembershipextr}
Suppose $u$ is a unit for the cone $C$ in the real vector space $V$ and
let $x\in V$. If $\ph(x)>0$ for all pure states $\ph$ of $(V,C,u)$, then
$x\in C$.
\end{cor}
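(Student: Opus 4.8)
The plan is to derive Corollary \ref{conemembershipextr} directly from Theorem \ref{conemembershipunitextreme} by a simple trick that absorbs the gap between ``$>0$'' and ``$\ge0$''. Note that if $x$ actually satisfied $\ph(x)>0$ for \emph{all} states (not just pure ones), we would be done by (b)$\implies$(c)$\implies x\in C$. But a priori we only know positivity on pure states, which is condition (a); since (a)$\iff$(c) in the theorem, we would still be done. So literally Corollary \ref{conemembershipextr} is just the implication (a)$\implies$(c)$\implies x\in\frac1Nu+C\subseteq C$ (the last inclusion because $u\in C$ by Remark \ref{linhullrem}, hence $\frac1Nu+C\subseteq C+C\subseteq C$).

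Concretely, here is how I would write it. First I would invoke Remark \ref{linhullrem} to record that $u\in C$, so that $\frac1Nu+C\subseteq C$ for every $N\in\N$. Then I would apply Theorem \ref{conemembershipunitextreme} with the same $u$, $C$, $V$, $x$: the hypothesis of the Corollary is precisely statement (a) of the theorem (``$\ph(x)>0$ for all $\ph\in\extr S(V,C,u)$''), so by the equivalence we obtain statement (c), i.e., there is $N\in\N$ with $x\in\frac1Nu+C$. Combined with the inclusion $\frac1Nu+C\subseteq C$, this gives $x\in C$.

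The only thing to be careful about is matching notation: the theorem writes $\extr S(V,C,u)$ for the set of pure states, and the Corollary says ``for all pure states $\ph$ of $(V,C,u)$'', which by Definition \ref{defstate} is exactly the same set of extreme points of the state space. So there is really nothing to prove beyond citing the theorem and the trivial inclusion. There is no genuine obstacle here; the substance is entirely in Theorem \ref{conemembershipunitextreme} (and behind it, the separation/Banach--Alaoglu/Krein--Milman package and Lemma \ref{autolin}), and this corollary is just the clean ``membership'' restatement one actually wants to use downstream. If one wanted to be slightly more self-contained one could instead argue that (a)$\implies$(b) is the non-trivial direction supplied by Krein--Milman together with the fact that the state space is the closed convex hull of its extreme points in a suitable topology, but since the theorem is already proved, the one-line deduction is the right level of detail.

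\begin{proof}
By Remark \ref{linhullrem}, $u\in C$, and therefore $\frac1Nu+C\subseteq C+C\subseteq C$ for every $N\in\N$. The hypothesis that $\ph(x)>0$ for all pure states $\ph$ of $(V,C,u)$ is, by Definition \ref{defstate}, exactly statement (a) of Theorem \ref{conemembershipunitextreme}. Hence statement (c) of that theorem holds, i.e., there is some $N\in\N$ with $x\in\frac1Nu+C\subseteq C$.
\end{proof}
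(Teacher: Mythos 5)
Your proof is correct and is exactly the deduction the paper intends: the corollary is stated with no proof precisely because it follows immediately from the equivalence (a)$\iff$(c) of Theorem \ref{conemembershipunitextreme} together with $u\in C$ (Remark \ref{linhullrem}) and the cone property $C+C\subseteq C$.
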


\begin{thm}[Burgdorf, Scheiderer, Schweighofer]\label{dichotomy}
Let $A$ be a ring containing $\R$.
Suppose that $I$ is an ideal and $M$ a quadratic module of $A$,
$u$ is a unit for $M\cap I$ in
$I$ and $\ph$ is a pure state of $(I,I\cap M,u)$.
Then
\[\Ph\colon A\to\R,\ a\mapsto\ph(au)\]
is a ring homomorphism that we call the ring homomorphism associated to $\ph$ and we have
\[\ph(ab)=\Ph(a)\ph(b)\]
for all $a\in A$ and $b\in I$.
\end{thm}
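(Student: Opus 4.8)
The plan is to deduce everything from Corollary~\ref{conemembershipextr} together with the defining properties of a pure state. First I would verify that $\Ph$ is additive and multiplicative. Additivity is immediate from linearity of $\ph$ and the identity $(a+b)u = au+bu$. The crux is multiplicativity: I want to show $\ph((ab)u) = \ph(au)\,\ph(bu)$ for all $a,b\in A$. The natural route is to first establish the stronger-looking factorization $\ph(ab) = \Ph(a)\ph(b)$ for $a\in A$ and $b\in I$, and then specialize $b:=u\in I$ (note $u\in M\cap I\subseteq I$ by Remark~\ref{linhullrem}) to get multiplicativity of $\Ph$, since $\Ph(a)\Ph(b) = \Ph(a)\ph(bu) = \ph(a\,bu) = \ph((ab)u) = \Ph(ab)$ after absorbing $bu\in I$. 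So the whole theorem reduces to the factorization identity $\ph(ab)=\Ph(a)\ph(b)$.

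To prove $\ph(ab)=\Ph(a)\ph(b)$ for fixed $a\in A$ and all $b\in I$, I would argue that the map $b\mapsto \ph(ab)$ is, up to scaling and sign-splitting, built from states, and then use that $\ph$ is an \emph{extreme} point. Here is the mechanism I expect to work. Since $u$ is a unit for $M\cap I$ in $I$, for each fixed $a\in A$ there is $N\in\N$ with $Nu\pm a u\in M$; one checks $Nu\pm au$ lies in $I$ as well (because $au\in I$, $u\in I$), so $Nu\pm au\in M\cap I$. Now consider the two linear functionals on $I$
\[
\ph_{\pm}(b) := \frac{1}{2N}\,\ph\big((Nu\pm au)\,b\big),\qquad b\in I,
\]
wait — this needs $b\mapsto \ph(cb)$ to be well-defined on $I$, which it is since $cb\in I$ whenever $c\in A$, $b\in I$. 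The key positivity claim is: if $c\in M\cap I$ with $c$ "arithmetically bounded" in an appropriate sense, then $b\mapsto\ph(cb)$ maps $M\cap I$ into $\R_{\ge0}$. This is where the structure of $M$ as a quadratic module enters: for $b\in M\cap I$ one wants $cb\in M$ (or at least that $\ph(cb)\ge0$), which follows when $c$ is a product of squares times generators — but in general $M\cap I$ need not be closed under such multiplication, so the honest argument must instead use that $c=Nu\pm au$ is \emph{dominated by $u$}, run the Cauchy--Schwarz-type inequality for the positive functional $\ph$ on $M\cap I$, and thereby control $\ph(cb)$. Modulo this, $\ph_+$ and $\ph_-$ are (nonnegative multiples of) states of $(I, I\cap M, u)$, we have $\ph = \ph_+ - \ph_-$ rescaled so that $\ph_+ + \ph_- $ has the right normalization, i.e. $\ph$ is a convex combination $\frac12\psi_+ + \frac12\psi_-$ of two states $\psi_\pm$; extremality of $\ph$ forces $\psi_+=\psi_-=\ph$, which unwinds exactly to $\ph((Nu+au)b) = \ph((Nu-au)b)$, hence $\ph(aub)=0$?? — that is clearly wrong, so the correct bookkeeping must instead yield $\ph((Nu+au)b) = c_+\,\ph(b)$ and $\ph((Nu-au)b)=c_-\,\ph(b)$ with $c_\pm = \ph((Nu\pm au)u) = N\ph(u) \pm \Ph(a) = N\pm\Ph(a)$ for \emph{all} $b$, and subtracting gives $2\ph(aub) = 2\Ph(a)\ph(b)$. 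Thus the real content is: an extreme point of the state space, when "twisted" by a $u$-bounded element $c\in M\cap I$, gives back a scalar multiple of itself, the scalar being $\ph(cu)$.

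The main obstacle, as indicated above, is precisely this twisting lemma: showing that for $c\in M\cap I$ bounded by $u$ (in the sense $Nu-c\in M$ for some $N$), the functional $b\mapsto \ph(cb)$ is a nonnegative multiple of a state and of $u-c$-twist likewise, so that $\ph$ decomposes as a genuine convex combination in the state space and extremality applies. This requires care because $M\cap I$ is only a quadratic module "relative to the ring $B_{(A,M)}$" rather than all of $A$, and one must stay inside $I$ at every step; the identity $a^2\in B_{(A,M)}\Rightarrow a\in B_{(A,M)}$ from Proposition~\ref{bammodule} and the module structure of $B_{(A,M,u)}$ are the tools that keep the degrees/boundedness under control. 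Once the twisting lemma is in hand, additivity of $\ph$, the relation $\ph(au)=\Ph(a)$, and a direct computation $\Ph(ab)=\ph((ab)u)=\ph(a(bu))=\Ph(a)\ph(bu)=\Ph(a)\Ph(b)$ finish the proof that $\Ph$ is a ring homomorphism, and the factorization $\ph(ab)=\Ph(a)\ph(b)$ for $b\in I$ is exactly the twisting lemma read off with $c$ replaced by the $u$-bounded approximations to $a$. I would present the twisting lemma first as a standalone step, then assemble the theorem in a few lines.
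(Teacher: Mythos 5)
The paper itself proves Theorem~\ref{dichotomy} in one line by citing Theorem~4.5 and Lemma~4.9 of Burgdorf--Scheiderer--Schweighofer, so you are attempting a self-contained argument, which is a legitimate and potentially more instructive route. However, the proposal has a genuine gap at exactly the step you yourself call ``the main obstacle.'' The positivity needed to justify the twisting --- that for a $u$-bounded $c\in M\cap I$ the map $b\mapsto\ph(cb)$ carries $M\cap I$ into $\R_{\ge0}$ --- does \emph{not} follow from the state axioms: a quadratic module is not closed under products of its own elements (that closure is what distinguishes a preordering from a quadratic module), so $c,b\in M\cap I$ gives no control over the sign of $\ph(cb)$, and ``running a Cauchy--Schwarz-type inequality'' is left as a placeholder rather than an argument. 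This is precisely the difficulty that the square-root trick handles in the $C^*$-algebra setting, and here there is no square root. The way the perturbation argument is actually made to work is by twisting by \emph{squares}: since $M$ is a quadratic module one does have $\bigl(\sum A^2\bigr)M\subseteq M$, so for any $s\in A$ and $b\in M\cap I$ the product $s^2b$ lands back in $M\cap I$, hence $\ph(s^2b)\ge0$ automatically. One then perturbs $\ph$ by $(1+ta)^2$ for arithmetically bounded $a\in A$ and small real $t$, writes $\ph$ as a proper convex combination of the two normalized twisted states, and invokes extremality to identify them with $\ph$; expanding in $t$ gives the factorization for squares of bounded elements, after which Proposition~\ref{bammodule} together with $a=\bigl(\tfrac{a+1}{2}\bigr)^2-\bigl(\tfrac{a-1}{2}\bigr)^2$ (this uses $\R\subseteq A$) extends it to all bounded $a$.

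There is also a secondary bookkeeping problem with your choice $c=Nu\pm au=(N\pm a)u$. Even if the twisting lemma were granted in the form $\ph(cb)=\ph(cu)\ph(b)$, subtracting the two cases yields $\ph(aub)=\ph(au^2)\ph(b)$, which is the target identity for the element $au\in I$ in place of $a$; since $u$ is not assumed invertible, this does not give $\ph(ab)=\Ph(a)\ph(b)$. The line ``$\ph((Nu\pm au)u)=N\ph(u)\pm\Ph(a)$'' silently replaces $\ph(u^2)$ by $\ph(u)=1$ and $\ph(au^2)$ by $\ph(au)$, and neither replacement is available before the theorem is proved. Worse, the decomposition you sketch expresses the state $b\mapsto\ph(ub)/\ph(u^2)$ as a convex combination of the twisted states, not $\ph$ itself, so to invoke extremality of $\ph$ you would already need $\ph(ub)=\ph(u^2)\ph(b)$, which is the case $a=u$ of what you are proving. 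All of these problems vanish at once if you keep $u$ out of the twisting element and twist by squares of bounded ring elements as described above.
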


\begin{proof}
This follows from \cite[Theorem 4.5]{bss} applied to the ``quadratic pseudomodule'' $M\cap I$ together with \cite[Lemma 4.9]{bss}.
\end{proof}

\subsection{Positive semidefinite matrices}

Fix $n\in\N_0$. Consider the subspace
\[S\R^{n\times n}:=\{A\in\R^{n\times n}\mid A=A^T\}\]
of symmetric matrices inside the real vector space $\R^{n\times n}$ of all real $n\times n$ matrices.
A matrix $A\in\R^{n\times n}$ is called \emph{positive semidefinite} if $A\in S\R^{n\times n}$ and
$x^TAx\ge0$ for all $x\in\R^n$. It is called \emph{positive definite} if $A\in S\R^{n\times n}$ and
$x^TAx>0$ for all $x\in\R^n\setminus\{0\}$. For matrices $A,B\in\R^{n\times n}$ we write $A\succeq B$ or $A\succ B$
to express that $A-B$ is positiv semidefinite or positive definite, respectively. A matrix $A\in\R^{n\times n}$
is called \emph{negative semidefinite} or \emph{negative definite} if $-A$ is \emph{positive semidefinite} or \emph{positive definite},
respectively. For $A,B\in\R^{n\times n}$ we write $A\preceq B$ and $A\prec B$ to express that $B\succeq A$ and
$B\succ A$, respectively.
Using the spectral theorem, one easily sees that
\[\R^{n\times n}_{\succeq0}:=\{A\in\R^{n\times n}\mid A\succeq0\}=\{B^TB\mid B\in\R^{n\times n}\}.\]
We equip $S\R^{n\times n}$ with the unique topology induced by any norm on $S\R^{n\times n}$.
It is clear that $\R^{n\times n}_{\succeq0}$ is a cone in $S\R^{n\times n}$ and one easily shows that its interior in $S\R^{n\times n}$ is
\[\R^{n\times n}_{\succ0}:=\{A\in\R^{n\times n}\mid A\succ0\}.\]

\subsection{Real closed fields}\label{subs:rcf}

We remind the reader of the basic facts about real closed fields that
we will need. A good reference is \cite[Section 8.3]{pd1}.

If $R$ is a field and $R^2=\{a^2\mid a\in R\}$ is the set of its squares (using the notation from Subsection \ref{subs:notation}),
then $R$ is called \emph{real closed} if
\[a\le b~:\iff b-a\in R^2\qquad(a,b\in R)\]
defines an order (i.e., a reflexive, transitive and antisymmetric relation) on the set $R$ with respect to which the intermediate
value theorem for polynomials holds: Whenever $f\in R[T]$ and $x,y\in R$ with $x<y$ and
$f(x)f(y)<0$, then there is $z\in R$ with $x<z<y$ and $f(z)=0$.

The prototype of all real closed fields is the field of real numbers $\R$.
In this article, all real closed fields we will consider will contain $\R$ as a subfield.

Fix such a real closed extension field $R$ of $\R$. Then
\[\O_R:=B_{(R,R_{\ge0})}=\{a\in\R\mid\exists N\in\N:-N\le a\le N\}\]
is a subring of $R$ containing $\R$ (note that $R_{\ge0}=R^2$ is a quadratic module of $R$)
and we call its elements the \emph{finite} elements of $R$. Moreover,
\[\m_R:=\left\{a\in\R\mid\forall N\in\N:-\frac1N\le a\le\frac1N\right\}\]
is an ideal of $\O_R$ and we call its elements the \emph{infinitesimal} elements of $R$.
The complement of $\m_R$ in $\O_R$ is the group of units of $\O_R$, i.e.,
\[\O_R^\times=\O_R\setminus\m_R\]
and thus $\m_R$ is the unique maximal ideal of $\O_R$. Using the completeness of the field of real numbers $\R$, one shows
easily that for every $a\in\O_R$, there is exactly one $\st(a)\in\R$, called
the \emph{standard part} of $a$, such that \[a-\st(a)\in\m_R.\]
The map $R\to\R,\ a\mapsto\st(a)$ is easily shown to be a ring homomorphism with
kernel $\m_R$. If $a,b\in\O_R$ with $a\le b$, then $\st(a)\le\st(b)$. Conversely, if $a,b\in\O_R$ satisfy $\st(a)<\st(b)$, then
$a<b$. The standard part $\st(p)$ of a polynomial $p\in\O_R[\x]$ arises
by replacing each coefficient of $p$ by its standard part. Also
\[\O_R[\x]\to\R[\x],\ p\mapsto\st(p)\] is a ring homomorphism. For $x=(x_1,\ldots,x_n)\in\O_R^n$, we set
\[\st(x):=(\st(x_1),\ldots,\st(x_n)).\]
It is easy to see that
\[R=\R\iff\O_R=\R\iff\m_R=\{0\}\]
and that for each $a\in R_{\ge0}$ there is a unique $b\in R_{\ge0}$ such that $b^2=a$ which one denotes by $\sqrt a$.
By Proposition \ref{bammodule}, we have
\[a\in\O_R\iff\sqrt a\in\O_R\]
for all $a\in\R_{\ge0}$ and one easily shows that
\[a\in\m_R\iff\sqrt a\in\m_R\]
for all $a\in R_{\ge0}$.
For $x\in R^n$, we set
\[\|x\|_2:=\sqrt{x_1^2+\ldots+x_n^2}\]
and observe that
\[\|x\|_2\in\O_R\iff x\in\O_R^n.\]

\subsection{Semialgebraic sets, real quantifier elimination and the finiteness theorem}\label{subs:tarski}
Let $R$ be a real closed extension field of $\R$ and $n\in\N_0$. Note for later
that $R^0=\{0\}$ where $0$ is the empty tuple $0=()$ and thus there are exactly two subsets of $R^0$, namely $\{0\}$ and $\emptyset$.
The Boolean algebra of all \emph{$\R$-semialgebraic subsets
of $R^n$} is the smallest set of subsets of $R^n$ that contains $\{x\in R^n\mid p(x)\ge0\}$ as an element
for each $p\in\R[\x]$ and that is closed under finite intersections (the empty intersection being defined as
$R^n$) and under complements (thus also under finite unions). It is easy to see that an equivalent definition would be that
an \emph{$\R$-semialgebraic subset of $R^n$} is a finite union of sets of the form
\[\{x\in R^n\mid g(x)=0,h_1(x)>0,\dots,h_k(x)>0\}\qquad(k\in\N_0,g,h_1,\dots,h_k\in\R[\x]).\]
We simply say \emph{semialgebraic subset of $\R^n$} instead of $\R$-semialgebraic subset of $\R^n$.

It is trivial that for each semialgebraic subset $S$ of $\R^n$, there is an $\R$-semialgebraic subset $S'$ of $R^n$ such that
$S'\cap\R^n=S$ (just use the same polynomials to define it) but it is a deep theorem that there is \emph{exactly one} such $S'$
which we denote by $S_R$ and which we call the \emph{transfer} of $S$ to $R$. This follows easily from the
nontrivial fact that each nonempty $\R$-semialgebraic subset of $R^n$ has a point in $\R^n$
\cite[Proposition~4.1.1]{bcr}. Having said this, it is now
trivial that the correspondence
\begin{align*}
S&\mapsto S_R\\
S\cap\R^n&\mapsfrom S
\end{align*}
defines an isomorphism between the Boolean algebras of semialgebraic subsets of $\R^n$ and the Boolean algebra
of $\R$-semialgebraic subsets of $R^n$, that is a bijection that respects finite intersections and complements. In particular, it preserves
the empty intersection, i.e., it maps $\R^n$ to $R^n$ and thus the empty set to the empty set. It also compatible with finite unions of
course.

By \emph{real quantifier elimination} \cite[Proposition~5.2.2]{bcr}, the following is true: If $n\in\N_0$ and $S$ is a semialgebraic subset of $\R^{n+1}$, then
\begin{align*}
S'&:=\{x\in\R^n\mid\forall y\in\R:(x,y)\in S\}\qquad\text{and}\\
S''&:=\{x\in\R^n\mid\exists y\in\R:(x,y)\in S\}
\end{align*}
are semialgebraic subsets of $\R^n$
and for all real closed extension fields $R$ of $\R$, we have
\begin{align*}
S'_R&=\{x\in R^n\mid\forall y\in R:(x,y)\in S_R\}\qquad\text{and}\\
S''_R&=\{x\in R^n\mid\exists y\in R:(x,y)\in S_R\}.
\end{align*}
This extends in the obvious way to finitely many quantifiers. In particular, if
$S$ is a semialgebraic subset of $\R^n$ and each of $Q_1,\ldots,Q_n$ stands for an universal or existential quantifier
$\forall$ or $\exists$, then consider the statements
\[(*)\qquad Q_1y_1\in\R:\ldots Q_ny_n\in\R:y\in S\]
and
\[(**)\qquad Q_1y_1\in R:\ldots Q_ny_n\in R:y\in S_R\]
and let $S'\subseteq\R^0=\{0\}$
denote the set of all $x\in\R^0$ for which $(*)$ holds, i.e.,
$S'=\{0\}$ if $(*)$ holds and $S'=\emptyset$ otherwise. Then $S_R'$ is the set of all $x\in R^0$
for which $(**)$ holds, i.e., $S'_R=\{0\}$ if $(*)$ holds and $S'_R=\emptyset$ otherwise. Since
$S'=\emptyset\iff S'_R=\emptyset$, we have
\[(*)\iff(**).\]
In this way, one can transfer certain statements from $\R$ to $R$ (or vice versa).
Since $S\mapsto S_R$ is an isomorphism of Boolean algebras as described above, one can do the same thing
if one deals with expressions similar to $(*)$ built up from finitely many atoms of the form ``$y\in S$'' ($S\subseteq\R^n$ a
semialgebraic set where the arity $n$ and the variables $y$ might be different each time) by the 
the logical connectives ``and'' and ``not'' (and thus also ``or'', ``\mbox{$\Longrightarrow$}'', ``$\Longleftrightarrow$'' etc.) and
quantifications over $\R$ (such as ``$\forall x\in\R$''). We call such expressions a \emph{formula} and unlike $(*)$ they might still contain \emph{free} variables
$x\in\R^n$ (i.e., variables that have not been quantified over). In first order logic, one would formalize this notion of formula
as first order formulas in the language of ordered fields with new constant symbols for each real number
\cite[Definition~2.2.3]{bcr}. Let $\Ps(x)$ stand
for such a formula with free variables $x\in\R^n$. Then it is clear that
\[\{x\in\R^n\mid\Ps(x)\}_R=\{x\in R^n\mid\Ps_R(x)\}\]
where $\Ps_R$ arises from $\Ps$ by replacing each $S$ by $S_R$ and each quantification over $\R$ by the corresponding
quantification over $R$ (e.g., ``$\forall y\in\R$'' by ``$\forall y\in R$'') \cite[Corollary 5.2.4]{bcr}.
In particular, if $\Ps$ has no free variables (that is $n=0$), then $\Ps$ holds if and only if $\Ps_R$ holds.
This is called the Tarski transfer principle \cite[Proposition~5.2.3]{bcr}.

Of utmost importance for us will be the
finiteness theorem from first order logic \cite[Theorem 1.5.6]{pd2} that says in particular:
If $(S_d)_{d\in\N}$ is a sequence of semialgebraic subsets of $\R^n$
such that \[S_1\subseteq S_2\subseteq S_3\subseteq\ldots\] and if \[\bigcup_{d\in\N} (S_d)_R=R^n\] for all real closed extension
fields $R$ of $\R$, then there exists $d\in\N$ such that \[S_d=\R^n.\] In this article, we will refer to this
nontrivial fact simply as the \emph{finiteness theorem}. It follows also easily from \cite[Theorem~2.2.11]{pd1}.
Note that the converse is trivial: If $d\in\N$ such that $S_d=\R^n$, then
even $(S_d)_R=R^n$ for all real closed extension fields $R$ of $\R$.

\begin{ex}\label{boundex}
There is some real closed extension field $R$ of $\R$ with $R\ne\R$. To prove this, we assume the contrary and we set
$S_d:=[-d,d]\subseteq\R$ for each $d\in\N$. Then $S_1\subseteq S_2\subseteq S_3\subseteq\ldots$ and
$\bigcup_{d\in\N}S_d=\R$ and thus of course
$\bigcup_{d\in\N} (S_d)_R=R$ for all real closed extension fields $R$ of $\R$ (since the only such is $\R$ by assumption).
By the finiteness theorem, we get $S_d=\R$ for some $d\in\N$, which is a contradiction.
\end{ex}

A concrete example for a proper real closed extension field of $\R$ is the field of real Puiseux series \cite[Example 1.2.3]{bcr} but we will not need this.

\section{Pure states and nonnegative polynomials over real closed fields}

Throughout this section, we let $R$ be a real closed extension field of $\R$ and we set
$\O:=\O_R$ and $\m:=\m_R$.

\subsection{The Archimedean property}

\begin{pro}\label{archmodulecharrcf}
Suppose $n\in\N_0$ and $M$ is a quadratic module of $\O[\x]$. Then the following are equivalent:
\begin{enumerate}[\rm(a)]
\item $M$ is Archimedean.
\item $\exists N\in\N:N-\sum_{i=1}^nX_i^2\in M$
\item $\exists N\in\N:\forall i\in\{1,\dots,n\}:N\pm X_i\in M$
\end{enumerate}
\end{pro}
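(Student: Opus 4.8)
The plan is to prove the cyclic chain (a) $\Rightarrow$ (b) $\Rightarrow$ (c) $\Rightarrow$ (a), essentially transcribing the classical argument over $\R[\x]$ (cf.\ Proposition~\ref{archmodulechar}) while keeping track of the fact that $\O$ is only a subring of $R$, not a field. The one structural input I would isolate at the outset is that, because $\O\supseteq\R$ and---by Proposition~\ref{bammodule} applied to $R$ with its quadratic module $R_{\ge0}=R^2$---every nonnegative finite element of $R$ is a square in $\O$ (that is, $R_{\ge0}\cap\O=\O^2$), one has $\sum\O[\x]^2\subseteq M$ for every quadratic module $M$ of $\O[\x]$ (indeed $p^2=p^2\cdot1\in\O[\x]^2M\subseteq M$, and $M+M\subseteq M$), and likewise $R_{\ge0}\cap\O\subseteq M$. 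This is the analogue, valid over $\O$, of the elementary fact---used freely in the classical proof---that every nonnegative real number is a square.

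For (a) $\Rightarrow$ (b): Archimedeanness gives $B_{(\O[\x],M)}=\O[\x]$, so in particular $\sum_{i=1}^nX_i^2\in B_{(\O[\x],M)}$, which by the definition of arithmetic boundedness (with unit $1$) yields $N\in\N$ with $N-\sum_{i=1}^nX_i^2\in M$. For (b) $\Rightarrow$ (c): I would use, for each $i\in\{1,\dots,n\}$, the Lagrange-type identity
\[
N+1\pm X_i=\Bigl(N-\tfrac14X_i^2\Bigr)+\Bigl(\tfrac12X_i\pm1\Bigr)^2,
\]
observing that $\bigl(\tfrac12X_i\pm1\bigr)^2\in\O[\x]^2\subseteq M$ and that
\[
N-\tfrac14X_i^2=\Bigl(N-\sum_{j=1}^nX_j^2\Bigr)+\sum_{j\ne i}X_j^2+\Bigl(\tfrac{\sqrt3}{2}X_i\Bigr)^2\in M
\]
by (b) together with $\sum\O[\x]^2\subseteq M$. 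Hence $N+1\pm X_i\in M$ for all $i$, so (c) holds with $N+1$ in place of $N$.

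For (c) $\Rightarrow$ (a): Since $\tfrac12\in\O[\x]$, Proposition~\ref{bammodule} shows that $B_{(\O[\x],M)}$ is a subring of $\O[\x]$. By (c) it contains $X_1,\dots,X_n$, and it contains $\O$: given $a\in\O$, choose $N_0\in\N$ with $-N_0\le a\le N_0$, so that $N_0\pm a\in R_{\ge0}\cap\O\subseteq M$, whence $a\in B_{(\O[\x],M)}$. A subring of $\O[\x]$ containing $\O$ and $X_1,\dots,X_n$ is all of $\O[\x]$, so $M$ is Archimedean. I do not expect a genuine obstacle: the argument is the classical one, and the only care required is the substitution of ``$R_{\ge0}\cap\O=\O^2$'' for ``every nonnegative real is a square'', which Proposition~\ref{bammodule} supplies. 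The degenerate case $n=0$ is covered by the same arguments---(b) holds because $1\in M$, (c) is vacuous, and $B_{(\O,M)}=\O$ follows from the computation $\O\subseteq B_{(\O[\x],M)}$ above---so the equivalence holds there as well.
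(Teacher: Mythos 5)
Your proof is correct and follows essentially the same route as the paper: establish (a) $\Rightarrow$ (b) $\Rightarrow$ (c) $\Rightarrow$ (a), using $\O\subseteq B_{(\O[\x],M)}$ and Proposition~\ref{bammodule} for the last implication. The only cosmetic difference is that for (b) $\Rightarrow$ (c) the paper deduces $N-X_i^2\in M$ and then cites the implication $a^2\in B_{(A,M)}\Rightarrow a\in B_{(A,M)}$ from Proposition~\ref{bammodule}, whereas you inline the underlying Lagrange-type identity (which is exactly the computation inside that proposition's proof), so the two arguments are substantively identical.
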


\begin{proof}
It is trivial that (a) implies (b).

Now suppose that (b) holds. Then $N-X_i^2\in M$ and thus $X_i^2\in B_{(\O[\x],M)}$
for all $i\in\{1,\dots,n\}$. Apply now Proposition \ref{bammodule} to see that (c) holds.

That (c) implies (a) follows from Proposition \ref{bammodule} since $\O\subseteq B_{(\O[\x],M)}$.
\end{proof}

\begin{rem}
Looking at Proposition \ref{archmodulechar}(d), one could be inclined to think that one could add
\begin{multline*}
\exists m\in\N:\exists\ell_1,\dots,\ell_m\in M\cap\O[\x]_1:\exists N\in\N:\\
\emptyset\ne\{x\in R^n\mid\ell_1(x)\ge0,\dots,\ell_m(x)\ge0\}\subseteq[-N,N]_R^n
\end{multline*}
as another equivalent condition
in Proposition \ref{archmodulecharrcf}. Indeed, choose $R$ different from $\R$ (which is possible by
Example \ref{boundex}) and choose $\ep\in\m\setminus\{0\}$.
Then $\emptyset\ne\{0\}=\{x\in R\mid\ep x\ge0,-\ep x\ge0\}\subseteq[-1,1]_R$ but
the quadratic module \[\sum\O[X]^2+\sum\O[X]^2\ep X+\sum\O[X]^2(-\ep X)\]
generated by $\ep X$ and $-\ep X$
in $\O[X]$ is not Archimedean for if we had $N\in\N$ such that $N-X^2$ lies in it,
then taking standard parts would yield
$N-X^2\in\sum\R[X]^2$ which is obviously not possible.
\end{rem}

\subsection{The relevant ideals}

\begin{df}\label{adamshom}
For every $x\in\O^n$, we define $I_x$ to be the kernel of the ring homomorphism
\[\O[\x]\to\O,\ p\mapsto p(x).\]
\end{df}

\begin{pro}
Let $x\in\O^n$. Then $I_x=(X_1-x_1,\dots,X_n-x_n)$.
\end{pro}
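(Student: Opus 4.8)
The plan is to establish the non-trivial inclusion $I_x\subseteq(X_1-x_1,\dots,X_n-x_n)$ by reducing to the special case $x=0$, in which the assertion collapses to the elementary fact that a polynomial lies in $(X_1,\dots,X_n)$ precisely when its constant term vanishes. The reverse inclusion is immediate: each $X_i-x_i$ evaluates to $x_i-x_i=0$ at $x$, so $(X_1-x_1,\dots,X_n-x_n)\subseteq I_x$ because $I_x$ is an ideal.

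First I would introduce the translation $\psi\colon\O[\x]\to\O[\x]$, the unique $\O$-algebra homomorphism with $\psi(X_i)=X_i+x_i$ for $i\in\{1,\dots,n\}$, which exists by the universal property of the polynomial ring. It is an automorphism, its inverse being the $\O$-algebra homomorphism sending $X_i\mapsto X_i-x_i$. Writing $\ev_y\colon\O[\x]\to\O$ for evaluation at $y\in\O^n$, I claim $\ev_x=\ev_0\circ\psi$: both sides are $\O$-algebra homomorphisms and they agree on each generator $X_i$, since $\ev_0(\psi(X_i))=\ev_0(X_i+x_i)=x_i=\ev_x(X_i)$. Consequently $I_x=\ker\ev_x=\psi^{-1}(\ker\ev_0)$.

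It then remains to identify $\ker\ev_0=(X_1,\dots,X_n)$ and to transport this through $\psi^{-1}$. For $p=\sum_\al a_\al\x^\al$ with $a_\al\in\O$ we have $\ev_0(p)=a_0$, the constant term, while $p-a_0=\sum_{\al\ne0}a_\al\x^\al$ is an $\O$-linear combination of monomials each divisible by some $X_i$, hence lies in $(X_1,\dots,X_n)$. Therefore $\ev_0(p)=0$ if and only if $p\in(X_1,\dots,X_n)$, i.e. $\ker\ev_0=(X_1,\dots,X_n)$. Applying the automorphism $\psi^{-1}$, which carries the generators $X_i$ to $X_i-x_i$, yields $I_x=\psi^{-1}((X_1,\dots,X_n))=(X_1-x_1,\dots,X_n-x_n)$.

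I do not expect any real obstacle: the argument uses nothing about $\O$ beyond its being a commutative unital ring (the hypothesis $x\in\O^n$ only ensures that $\ev_x$ lands in $\O$ and that $I_x$ is an ideal of $\O[\x]$), and the only points needing a line of verification are that $\psi$ is an automorphism and that $\ev_x=\ev_0\circ\psi$, both of which are routine consequences of the universal property. One could alternatively avoid $\psi$ altogether and argue directly by expanding $\x^\al-x^\al$ as $\sum_i (X_i-x_i) q_{\al,i}$ for suitable $q_{\al,i}\in\O[\x]$, but the translation automorphism makes the bookkeeping cleanest.
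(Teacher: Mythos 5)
Your proof is correct. The paper's own argument is even more condensed: after noting the trivial inclusion $J:=(X_1-x_1,\dots,X_n-x_n)\subseteq I_x$, it simply observes that $p\equiv_J p(x)=0$ for every $p\in I_x$ (since each $X_i\equiv x_i\pmod J$, one may substitute $x$ for $\x$ modulo $J$), giving $I_x\subseteq J$ in one line. Your route makes the same underlying translation $X_i\leftrightarrow X_i-x_i$ explicit as an $\O$-algebra automorphism $\psi$, reduces to the base case $x=0$, and identifies $\ker\ev_0$ with the ideal of polynomials with zero constant term. Both arguments rest on identical algebraic content; the paper trades the explicit automorphism for a single congruence, while your version spells out the bookkeeping and would be a touch friendlier to a reader who has not seen the reduction-mod-$J$ idiom before. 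There is no gap in either direction, and your remark that only commutativity of $\O$ is used is accurate.
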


\begin{proof}
It is trivial that $J:=(X_1-x_1,\dots,X_n-x_n)\subseteq I_x$.
Conversely, $p\equiv_Jp(x)=0$ for all
$p\in I_x$. This shows the converse inclusion $I_x\subseteq J$.
\end{proof}

\begin{lem}\label{coprime}
Suppose $x,y\in\O^n$ with $\st(x)\ne\st(y)$. Then $I_x$ and $I_y$ are coprime, i.e.,
$1\in I_x+I_y$.
\end{lem}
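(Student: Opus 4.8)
The goal is to show $1 \in I_x + I_y$ given $\st(x) \neq \st(y)$. The plan is to produce an explicit linear combination. First I would note that $\st(x) \neq \st(y)$ means there is some coordinate index $i \in \{1,\dots,n\}$ with $\st(x_i) \neq \st(y_i)$, hence $\st(x_i - y_i) \neq 0$, so $x_i - y_i$ is a finite element whose standard part is nonzero. By the facts recalled in Subsection \ref{subs:rcf}, an element of $\O$ with nonzero standard part lies in $\O^\times = \O \setminus \m$, so $x_i - y_i$ is a unit in $\O$; call its inverse $c := (x_i - y_i)^{-1} \in \O$.

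Next I would exhibit the combination. We have $X_i - x_i \in I_x$ and $X_i - y_i \in I_y$ by the preceding proposition, hence their difference $(X_i - y_i) - (X_i - x_i) = x_i - y_i \in I_x + I_y$. Since $I_x + I_y$ is an ideal of $\O[\x]$ and $c \in \O \subseteq \O[\x]$, multiplying by $c$ gives $1 = c(x_i - y_i) \in I_x + I_y$, which is exactly the assertion.

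There is no real obstacle here; the only point requiring the hypothesis and the real-closed-field setup is the passage from $\st(x_i - y_i) \neq 0$ to invertibility of $x_i - y_i$ in $\O$, which is precisely the statement $\O^\times = \O \setminus \m$ together with the characterization of $\m$ as the infinitesimals. The rest is the standard fact that in any commutative ring, the kernels of two evaluation homomorphisms agreeing nowhere-in-some-coordinate are coprime once the coordinate difference is a unit. I would keep the write-up to these two or three lines.
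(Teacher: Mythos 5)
Your proposal is correct and takes essentially the same approach as the paper: pick a coordinate where the standard parts differ, observe that the coordinate difference is then a unit in $\O$, and divide the identity $(X_i-y_i)-(X_i-x_i)=x_i-y_i$ by it. The paper just writes this down directly as $1=\frac{x_1-X_1}{x_1-y_1}+\frac{X_1-y_1}{x_1-y_1}$ after a WLOG reduction to $i=1$.
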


\begin{proof}
WLOG $x_1-y_1\notin\m$. Then $x_1-y_1\in\O^\times$ and
\[1=\frac{x_1-X_1}{x_1-y_1}+\frac{X_1-y_1}{x_1-y_1}\in I_x+I_y.\]
\end{proof}

Consider $x\in\O^n$.
We remind the reader that we denote by $I_x^2$ the second power of $I_x$ as an ideal, see
Subsection \ref{subs:notation}. It consist of those polynomials
that have a double zero at $x$, see Lemma \ref{membershipix} below. If one wants to show that such a polynomial
lies in a quadratic module of $\O[\x]$, it is often advantageous to intersect this quadratic module with $I_x^2$.
If $M$ is Archimedean, then $1$ is a unit for $M$ according to Definition \ref{defunit}
in the real vector space $\O[\x]$ by Proposition \ref{archmodulecharrcf}.
Since $1$ is not a member of $I_x^2$, it would be good to find a kind of replacement for it. This role will
be played by $u_x$ from the next lemma.

\begin{lem}\label{ixunit}
Let $M$ be an Archimedean quadratic module of $\O[\x]$ and $x\in\O^n$. Then
\[u_x:=(X_1-x_1)^2+\ldots+(X_n-x_n)^2\]
is a unit for $M\cap I_x^2$ in the real vector space $I_x^2$.
\end{lem}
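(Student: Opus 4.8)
The plan is to recognize $u_x$ as a unit via the arithmetic-boundedness machinery of Proposition~\ref{bammodule}. First note that $u_x\in M\cap I_x^2$: it is a sum of squares, hence lies in $M$, and $u_x=\sum_{i=1}^n(X_i-x_i)(X_i-x_i)$ visibly lies in $I_x^2=I_xI_x$. Recall moreover that $I_x^2$ is precisely the ideal of $\O[\x]$ generated by the products $(X_i-x_i)(X_j-x_j)$ for $1\le i,j\le n$, since the product of two ideals is generated by the pairwise products of generators. So it will suffice to bound each of these generators by $u_x$ modulo $M$ and then to propagate the bound to all of $I_x^2$.

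Concretely, I claim that $u_x\pm(X_i-x_i)(X_j-x_j)\in M$ for all $i,j\in\{1,\dots,n\}$, that is, $(X_i-x_i)(X_j-x_j)\in B_{(\O[\x],M,u_x)}$ with $N=1$. This follows from the completion-of-squares identity
\[
2u_x\pm 2(X_i-x_i)(X_j-x_j)=\bigl((X_i-x_i)\pm(X_j-x_j)\bigr)^2+\sum_{k\ne i}(X_k-x_k)^2+\sum_{k\ne j}(X_k-x_k)^2,
\]
whose right-hand side is a sum of squares and hence lies in $M$; dividing by $2$ is harmless since $\frac12=\bigl(\frac1{\sqrt2}\bigr)^2\in\O[\x]^2$, so $M$ is closed under multiplication by $\frac12$. (The identity and these manipulations are valid for $i=j$ as well.)

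Then I would apply Proposition~\ref{bammodule} with $A=\O[\x]$, which contains $\frac12$. Since $M$ is Archimedean we have $B_{(\O[\x],M)}=\O[\x]$, and since $u_x\in\sum\O[\x]^2$ the set $B_{(\O[\x],M,u_x)}$ is an $\O[\x]$-submodule of $\O[\x]$. By the previous paragraph it contains every generator $(X_i-x_i)(X_j-x_j)$ of the ideal $I_x^2$, hence it contains $I_x^2$ entirely. Therefore, given any $p\in I_x^2$ there is $N\in\N$ with $Nu_x\pm p\in M$; taking the plus sign gives $Nu_x+p\in M$, and $Nu_x+p\in I_x^2$ holds automatically because $u_x,p\in I_x^2$ and $I_x^2$ is an ideal. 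Thus $Nu_x+p\in M\cap I_x^2$, which says exactly that $u_x$ is a unit for $M\cap I_x^2$ in the real vector space $I_x^2$.

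There is no genuine obstacle here. The only piece of actual content is the completion-of-squares identity that bounds a cross term $(X_i-x_i)(X_j-x_j)$ by $u_x$ modulo sums of squares, together with the observation that Proposition~\ref{bammodule} upgrades such a bound on the ideal generators to a bound on all of $I_x^2$ in a single step; the one point worth spelling out is why one may halve an element of $M$.
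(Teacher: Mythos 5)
Your proof is correct and takes essentially the same approach as the paper's: both apply Proposition~\ref{bammodule} to reduce to bounding the generators of the ideal $I_x^2$ by $u_x$ modulo $M$, which you achieve via a completion-of-squares identity. The only cosmetic difference is that the paper first reduces to $x=0$ via the translation automorphism $p\mapsto p(X_1-x_1,\ldots,X_n-x_n)$ and then uses $\frac12(X_i^2+X_j^2)\pm X_iX_j=\frac12(X_i\pm X_j)^2$, whereas you work directly in general coordinates with the equivalent identity.
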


\begin{proof}
Using the ring automorphism \[\O[\x]\to\O[\x],\ p\mapsto p(X_1-x_1,\ldots,X_n-x_n),\]
which is also an isomorphism of real vector spaces, we can reduce to the case $x=0$.
Since $u_x\in I_0^2$, it suffices to show that $I_0^2\subseteq B_{(\O[\x],M,u)}$.
Since $M$ is Archimedean, Proposition~\ref{bammodule} yields that
$B_{(\O[\x],M,u)}$ is an ideal of $\O[\x]$. Because of
\[I_0^2=(X_iX_j\mid i,j\in\{1,\dots,n\}),\]
it suffices therefore to show that $X_iX_j\in B_{(\O[\x],M,u)}$ for all $i,j\in\{1,\dots,n\}$.
Thus fix $i,j\in\{1,\dots,n\}$. Then $\frac12(X_i^2+X_j^2)\pm X_iX_j=\frac12(X_i\pm X_j)^2\in M$
and thus $\frac12u\pm X_iX_j\in M$. Since $u\in M$, this implies
$u\pm X_iX_j\in M$.
\end{proof}

\subsection{States over real closed fields}

\begin{nt}
We use the symbols $\nabla$ and $\hess$ to denote the gradient and the Hessian of a real-valued function of $n$ real variables, respectively. For a \emph{polynomial} $p\in\R[\x]$, we understand its
gradient $\nabla p$ as a column vector from $\R[\x]^n$, i.e., as a vector of polynomials. Similarly, its Hessian
$\hess p$ is a symmetric matrix polynomial of size $n$, i.e., a symmetric matrix from $\R[\x]^{n\times n}$.
Using formal partial derivatives, we more generally define $\nabla p\in R[\x]^n$ and
$\hess p\in R[\x]^{n\times n}$ even for $p\in R[\x]$.
\end{nt}

\begin{lem}\label{secondtypelemma}
Suppose $x\in\O^n$ and $\ph$ is a state of $(I_x^2,\sum\O[\x]^2\cap I_x^2,u_x)$ such that
$\ph|_{I_x^3}=0$. Then there exist
$v_1,\dots,v_n\in\R^n$ such that
$\sum_{i=1}^nv_i^Tv_i=1$ and
\[\ph(p)=\frac12\st\left(\sum_{i=1}^nv_i^T(\hess p)(x)v_i\right)\]
for all $p\in I_x^2$.
\end{lem}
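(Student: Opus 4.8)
The plan is to reduce to the case $x=0$, then use the hypothesis $\ph|_{I_x^3}=0$ to see that $\ph$ depends only on the homogeneous quadratic part of its argument, and finally represent the induced positive functional on quadratic forms by a positive semidefinite matrix, which decomposes as $\sum v_iv_i^T$.

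First the reduction. Let $\ta$ be the $\O$-algebra automorphism of $\O[\x]$ with $X_j\mapsto X_j+x_j$; it maps $I_x,I_x^2,I_x^3,u_x$ onto $I_0,I_0^2,I_0^3,u_0$ and maps $\sum\O[\x]^2$ onto itself, and $(\hess(\ta(p)))(0)=(\hess p)(x)$ for all $p\in\O[\x]$ by the chain rule for the affine substitution $X\mapsto X+x$. Hence $\ph\circ\ta^{-1}$ is a state of $(I_0^2,\sum\O[\x]^2\cap I_0^2,u_0)$ vanishing on $I_0^3$, and applying the lemma at $0$ to this state and evaluating the resulting formula at $\ta(p)$ for $p\in I_x^2$ gives the lemma at $x$. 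So assume from now on that $x=0$.

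Since $I_0^2$ (resp.\ $I_0^3$) consists exactly of the polynomials all of whose monomials have degree $\ge2$ (resp.\ $\ge3$), every $p\in I_0^2$ differs from its homogeneous quadratic part $\tfrac12X^TAX$ by an element of $I_0^3$, where $A:=(\hess p)(0)\in S\O^{n\times n}$ and $X=(X_1,\dots,X_n)^T$ is read as the column of variables; thus $\ph(p)=\tfrac12\ph(X^TAX)$. Write $A=\st(A)+E$ with $\st(A)\in S\R^{n\times n}$ the entrywise standard part and $E$ a symmetric matrix all of whose entries lie in $\m$. I claim $\ph(X^TEX)=0$. Put $\de:=n\max_{i,j}|E_{ij}|\in\m_{\ge0}$; then the forms $\de u_0\pm X^TEX$ are diagonally dominant with nonnegative diagonal, so the usual diagonally-dominant decomposition into squares (with coefficients $\de\pm E_{ii}-\sum_{j\ne i}|E_{ij}|$ and $|E_{ij}|$, all in $\O_{\ge0}$ and hence with square roots in $\O$) exhibits them in $\sum\O[\x]^2\cap I_0^2$; applying $\ph$ gives $|\ph(X^TEX)|\le\ph(\de u_0)$. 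On the other hand, for every $N\in\N$ we have $(\tfrac1N-\de)u_0=\sum_{k=1}^n\bigl(\sqrt{\tfrac1N-\de}\,X_k\bigr)^2\in\sum\O[\x]^2\cap I_0^2$ because $\tfrac1N-\de\in\O_{\ge0}$, whence $0\le\ph(\de u_0)\le\ph(\tfrac1Nu_0)=\tfrac1N$ for all $N$, forcing $\ph(\de u_0)=0$. Hence $\ph(X^TEX)=0$ and $\ph(p)=\tfrac12\ph(X^T\st(A)X)$.

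Now the map $\mu\colon S\R^{n\times n}\to\R$, $B\mapsto\ph(X^TBX)$, is $\R$-linear, takes the identity matrix to $\ph(u_0)=1$, and is nonnegative on positive semidefinite $B$ (write $B=L^TL$ with $L\in\R^{n\times n}$, so $X^TBX\in\sum\R[\x]^2\cap I_0^2$). Representing $\mu$ on the finite-dimensional inner product space $(S\R^{n\times n},(A,B)\mapsto\tr(AB))$ as $\mu(B)=\tr(GB)$ with $G\in S\R^{n\times n}$, nonnegativity on the rank-one matrices $ww^T$ forces $G\succeq0$, and $\tr G=1$. Diagonalizing $G=\sum_{i=1}^n\la_iw_iw_i^T$ with $w_1,\dots,w_n$ orthonormal, $\la_i\ge0$ and $\sum_i\la_i=1$, and setting $v_i:=\sqrt{\la_i}\,w_i\in\R^n$, we get $\sum_{i=1}^nv_i^Tv_i=1$, $G=\sum_{i=1}^nv_iv_i^T$ and $\mu(B)=\sum_{i=1}^nv_i^TBv_i$. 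Since $\st$ is a ring homomorphism fixing $\R$ pointwise, this yields
\[\ph(p)=\tfrac12\mu(\st(A))=\tfrac12\sum_{i=1}^nv_i^T\st\bigl((\hess p)(0)\bigr)v_i=\tfrac12\st\Bigl(\sum_{i=1}^nv_i^T(\hess p)(0)v_i\Bigr),\]
which is the assertion for $x=0$; the general case follows from the first step.

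The main obstacle is the vanishing $\ph(X^TEX)=0$: because $\ph$ is only $\R$-linear, we cannot pull the infinitesimal scalars out of an $\O$-linear combination of the monomials $X_iX_j$, so we must instead dominate the \emph{whole} quadratic form $X^TEX$ by $\de u_0$ inside the cone $\sum\O[\x]^2\cap I_0^2$ and then squeeze $\ph(\de u_0)$ between $0$ and every $\tfrac1N$. The other ingredients---the translation to $x=0$, the reduction to the quadratic part via $\ph|_{I_0^3}=0$, and the matrix representation of positive functionals on the positive semidefinite cone---are routine.
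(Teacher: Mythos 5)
Correct, and essentially the same approach as the paper: reduce to $x=0$, use $\ph|_{I_0^3}=0$ to pass to the quadratic part, kill the infinitesimal contribution by squeezing against $\ph(\de u_0)$, and represent the resulting functional on $S\R^{n\times n}$ by a positive semidefinite matrix of trace $1$, which you decompose into rank-one pieces $v_iv_i^T$. The only cosmetic difference is that you eliminate the infinitesimal part of $(\hess p)(0)$ in one shot via a diagonally-dominant sum-of-squares decomposition, whereas the paper does it monomial-by-monomial (showing $\ph(aX_iX_j)=0$ for $a\in\m$ in its Claims 1--3); both are the same squeeze argument in slightly different packaging.
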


\begin{proof}
As in the proof of Lemma \ref{ixunit}, one easily reduces to the case $x=0$.

\medskip
\textbf{Claim 1:} $\ph(au_x)=0$ for all $a\in\m$.

\smallskip
\emph{Explanation.} Let $a\in\m$. WLOG $a\ge0$. Then $a\in\O\cap R_{\ge0}=\O^2$ and thus
$au_x\in\sum\O[\x]^2\cap I_0^2$. This shows $\ph(au_x)\ge0$. It remains to show that $\ph(au_x)\le\frac1N$ for all
$N\in\N$. For this purpose, fix $N\in\N$. Then $\frac1N-a\in\O\cap R_{\ge0}=\O^2$ and thus
$\left(\frac1N-a\right)u_x\in\sum\O[\x]^2\cap I_0^2$. It follows that $\ph\left(\left(\frac1N-a\right)u_x\right)\ge0$, i.e.,
$\ph(au_x)\le\frac1N$.

\medskip
\textbf{Claim 2:} $\ph(aX_i^2)=0$ for all $a\in\m$ and $i\in\{1,\dots,n\}$.

\smallskip
\emph{Explanation.} Let $a\in\m$. WLOG $a\ge0$ and thus $a\in\O^2$.
Then \[\sum_{i=1}^n\underbrace{\ph(\overbrace{aX_i^2}^{\rlap{$\scriptstyle\in\O[\x]^2\cap I_0^2$}})}_{\ge0}=\ph(au_x)\overset{\text{Claim 1}}=0.\]

\medskip
\textbf{Claim 3:} $\ph(aX_iX_j)=0$ for all $a\in\m$ and $i,j\in\{1,\dots,n\}$.

\smallskip
\emph{Explanation.} Fix $i,j\in\{1,\dots,n\}$ and $a\in\m$. If $i=j$, then we are done by Claim 2. So suppose
$i\ne j$. WLOG $a\ge0$ and thus $a\in\O^2$.
Then \[a(X_i^2+X_j^2\pm 2X_iX_j)=a(X_i\pm X_j)^2\in\O[\x]^2\cap I_0^2\] and thus
$\pm 2\ph(aX_iX_j)\underset{\text{Claim 2}}=\ph(aX_i^2)+\ph(aX_j^2)\pm 2\ph(aX_iX_j)\ge 0$.

\medskip
\textbf{Claim 4:} $\ph(p)=\frac12\st\left(\tr\left((\hess p)(0)A\right)\right)$ for all $p\in I_0^2$ where
\[A:=\begin{pmatrix}\ph(X_1X_1)&\dots&\ph(X_1X_n)\\
\vdots&\ddots&\vdots\\
\ph(X_nX_1)&\dots&\ph(X_nX_n)
\end{pmatrix}.
\]

\smallskip
\emph{Explanation.} Let $p\in I_0^2$. By $\ph|_{I_0^3}=0$, we can reduce to the case
$p=aX_iX_j$ with $i,j\in\{1,\dots,n\}$ and $a\in\O$.
Using  Claim 3, we can assume $a=1$. Comparing both sides,
yields the result.

\medskip
\textbf{Claim 5:} $A\succeq0$

\smallskip
\emph{Explanation.} If $x\in\R^n$, then $x^TAx=\ph((x_1X_1+\ldots+x_nX_n)^2)\ge0$ since
\[(x_1X_1+\ldots+x_nX_n)^2\in\R[\x]^2\cap I_0^2\subseteq\sum\O[\x]^2\cap I_0^2.\]
By Claim 5, we can choose $B\in\R^{n\times n}$ such that $A=B^TB$. Denote by $v_i$ the $i$-th
row of $B$ for $i\in\{1,\dots,n\}$. Then by Claim 4, we get
\begin{multline*}
\ph(p)=\frac12\st(\tr((\hess p)(0)A))
=\frac12\st(\tr((\hess p)(0)B^TB))\\
=\frac12\st(\tr(B(\hess p)(0)B^T))=\frac12\st\left(\sum_{i=1}^nv_i^T(\hess p)(0)v_i\right)
\end{multline*}
for all $p\in I_0^2$.
In particular, we obtain $1=\ph(u_x)=\sum_{i=1}^nv_i^Tv_i$.
\end{proof}

\begin{lem}\label{stpointev}
Let $\Ph\colon\O[\x]\to\R$ be a ring homomorphism. Then there is some $x\in\R^n$ such that
$\Ph(p)=\st(p(x))$ for all $p\in\O[\x]$.
\end{lem}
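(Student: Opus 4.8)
The plan is to prove that the restriction of $\Ph$ to the subring $\O\subseteq\O[\x]$ of constant polynomials is forced to coincide with the standard part map $\st\colon\O\to\R$, and then to recover the point $x$ as the tuple of images of the variables. So first I would put $x_i:=\Ph(X_i)\in\R$ for $i\in\{1,\dots,n\}$ and $x:=(x_1,\dots,x_n)\in\R^n$. For $p=\sum_\al a_\al\x^\al\in\O[\x]$ with all $a_\al\in\O$, multiplicativity of $\Ph$ gives $\Ph(p)=\sum_\al\Ph(a_\al)x^\al$; on the other hand $p(x)=\sum_\al a_\al x^\al$ lies in $\O$ (a finite sum of products of finite elements), and since $\st$ is a ring homomorphism fixing $\R$ pointwise we get $\st(p(x))=\sum_\al\st(a_\al)x^\al$. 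Hence the lemma reduces to the claim that $\Ph(a)=\st(a)$ for every $a\in\O$.

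To establish this claim, I would first record that $\Ph$ is order preserving on $\O$: if $a\le b$ with $a,b\in\O$, then $b-a\in\O\cap R_{\ge0}$, so $\sqrt{b-a}\in\O$ by the characterization of finite elements via square roots recalled in Subsection~\ref{subs:rcf}, and therefore $\Ph(b)-\Ph(a)=\Ph(b-a)=(\Ph(\sqrt{b-a}))^2\ge0$. Since any ring homomorphism fixes $\Q$ pointwise, I then argue by sandwiching: fix $a\in\O$ and take arbitrary $q_1,q_2\in\Q$ with $q_1<\st(a)<q_2$. From $\st(q_1)=q_1<\st(a)$ and $\st(a)<q_2=\st(q_2)$ together with the fact that a strict inequality of standard parts lifts to a strict inequality in $R$, we get $q_1<a<q_2$, whence $q_1=\Ph(q_1)\le\Ph(a)\le\Ph(q_2)=q_2$ by monotonicity. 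As the intersection of all intervals $[q_1,q_2]$ with rational endpoints satisfying $q_1<\st(a)<q_2$ is exactly $\{\st(a)\}$, this forces $\Ph(a)=\st(a)$, as desired.

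I do not expect a genuine obstacle here; the only point needing a little care is the reduction to $\Ph|_\O=\st$ together with the observation that $\Ph$ respects the ordering of $\O$, and both rest only on the elementary facts about finite elements collected in Subsection~\ref{subs:rcf}. An alternative route to $\Ph|_\O=\st$ would be to first show $\Ph(\m)=\{0\}$ (by squeezing $\Ph(a)$ between $0$ and $\frac1N$ for $a\in\m$ with $a\ge0$, using that such $a$ and $\frac1N-a$ are squares of finite elements) and then that $\Ph$ restricts to the identity on $\R$ (the only ring endomorphism of $\R$ being the identity), so that $\Ph(a)=\Ph(\st(a))+\Ph(a-\st(a))=\st(a)$; the sandwiching argument above accomplishes both steps simultaneously.
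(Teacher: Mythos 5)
Your proof is correct, and it takes essentially the same approach as the paper: both rest on the facts that $\Ph$ fixes $\Q$, that $\Ph$ preserves the order (because nonnegative finite elements are squares of finite elements), and a sandwiching argument with rationals. The only difference is organizational — the paper proves $\Ph|_\R=\id_\R$ and $\Ph|_\m=0$ as two separate squeeze arguments and then combines them, while you squeeze $\Ph(a)$ for $a\in\O$ directly between rationals bracketing $\st(a)$, a minor unification that you yourself point out in your final sentence.
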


\begin{proof}
Being a ring homomorphism, $\Ph$ maps each rational number to itself (since it maps $1$ to $1$) and
each square to a square. It follows that $\Ph|_\R$ is a monotonic function which fixes $\Q$ pointwise.
This easily implies $\Ph|_\R=\id_\R$. It is also easy to show that $\Ph|_\m=0$.
Indeed, for each
$N\in\N$ and $a\in\m$, we have $\frac1N\pm a\in R_{\ge0}\cap\O=\O^2$
and therefore
$\frac1N\pm\Ph(a)\in\R_{\ge0}$. 
Finally set \[x:=(\Ph(X_1),\dots,\Ph(X_n))\in\R^n\]
and use that $\Ph|_\R=\id_\R$, $\Ph|_\m=0$ and that $\Ph$ is a ring homomorphism.
\end{proof}

The following result can be seen as a dichotomy for pure states. In the situation described in the theorem, there are two
types of states: Up to a certain harmless scaling, the first type is just evaluation at a point different from the $\st(x_i)$ and taking the
standard part. The second type of pure states is, again up to a scaling, taking the standard part of
a mixture of second directional derivatives at one of the $\st(x_i)$. It shows that the state space plays roughly the role of a
``blowup'' of affine space in a certain sense. The
$\st(x_i)$ are removed and, ignoring the subtle issue with the mixtures, second directional derivatives at the $\st(x_i)$ are thrown in instead. In this way each $\st(x_i)$ is morally replaced by a projective space (note that second
directional derivatives do not distinguish opposite directions).

If one does \emph{not} ignore the scaling issue, then another interesting interpretation is that type (2) states can be seen,
in a certain sense, as mixtures
of limits of type (1) states. We leave this interpretation as an exercise to the interested reader.

In the case $R=\R$, a less concrete variant of this theorem is \cite[Corollary 4.12]{bss}. What is essentially new here is the case
$R\ne\R$ in which the standard part map turns out to play a big role. Note also the subtle issue that we suppose the
\emph{standard parts of the} $x_i$ to be pairwise different and not just the $x_i$. The theorem is the main step for Theorem \ref{mainrep} below, on which in turn all our main results will be based. It will not be used elsewhere in this article.

\begin{thm}\label{dicho}
Let $M$ be an Archimedean quadratic module of $\O[\x]$ and set
\[S:=\{x\in\R^n\mid\forall p\in M:\st(p(x))\ge0\}.\]
Moreover, suppose $k\in\N_0$ and let $x_1,\ldots,x_k\in\O^n$ satisfy $\st(x_i)\ne\st(x_j)$ for
$i,j\in\{1,\dots,k\}$ with $i\ne j$. Then
$u:=u_{x_1}\dotsm u_{x_k}$ is a unit for $M\cap I$ in
\[I:=I_{x_1}^2\dotsm I_{x_k}^2=I_{x_1}^2\cap\ldots\cap I_{x_k}^2\]
and for all pure states $\ph$ of $(I,M\cap I,u)$ (where $I$ is understood as a real vector space),
exactly one of the following cases occurs:
\begin{enumerate}[\rm(1)]
\item There is an $x\in S\setminus\{\st(x_1),\dots,\st(x_k)\}$ such that
\[\ph(p)=\st\left(\frac{p(x)}{u(x)}\right)\]
for all $p\in I$.
\item There is an $i\in\{1,\dots,k\}$ and $v_1,\dots,v_n\in\R^n$ such that
$\sum_{\ell=1}^nv_\ell^Tv_\ell=1$ and
\[\ph(p)=\st\left(\frac{\sum_{\ell=1}^nv_\ell^T(\hess p)(x_i)v_\ell}{2\prod_{\substack{j=1\\j\ne i}}^ku_{x_j}(x_i)}\right)\]
for all $p\in I$.
\end{enumerate}
\end{thm}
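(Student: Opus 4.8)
The proof splits into three parts: (i) the purely ideal‑theoretic claims about $I$, (ii) the statement that $u$ is a unit for $M\cap I$ in $I$, and (iii) the dichotomy itself, which will be deduced by feeding our $\ph$ into Theorem~\ref{dichotomy}, then using Lemma~\ref{stpointev} and Lemma~\ref{secondtypelemma}.

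\textbf{Algebra and the unit property.} By Lemma~\ref{coprime} the ideals $I_{x_1},\dots,I_{x_k}$ are pairwise coprime; since powers of pairwise coprime ideals are pairwise coprime and a finite product of pairwise coprime ideals equals their intersection, $I=\prod_iI_{x_i}^2=\bigcap_iI_{x_i}^2$. Moreover each $I_{x_i}^2$ is generated \emph{as an ideal of $\O[\x]$ by squares}: $(X_a-(x_i)_a)(X_b-(x_i)_b)$ is a $\pm\frac14$–combination of the two squares $\big((X_a-(x_i)_a)\pm(X_b-(x_i)_b)\big)^2$, so $I_{x_i}^2$ is generated by these squares, and hence $I$ is generated as an ideal by finitely many squares $s^2$, each $s$ a product of $k$ linear forms, one vanishing at each $x_j$. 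Now $u=u_{x_1}\dotsm u_{x_k}\in I$ is a sum of squares, so by Proposition~\ref{bammodule} (applied to the Archimedean $M$, since $u\in\sum\O[\x]^2$) the set $B:=B_{(\O[\x],M,u)}$ is an ideal of $\O[\x]$; thus it suffices to show each generating square $s^2$ lies in $B$. Writing $s^2=\prod_j\sigma_j$ with $\sigma_j=(\ell_a\pm\ell_b)^2$ a generating square of $I_{x_j}^2$ and $\ell_m:=X_m-(x_j)_m$, one has $2u_{x_j}-\sigma_j=(\ell_a\mp\ell_b)^2+2\sum_{m\ne a,b}\ell_m^2\in\sum\O[\x]^2$, and telescoping
\[2^ku-s^2=\sum_{i=1}^k\Big(\prod_{j<i}\sigma_j\Big)(2u_{x_i}-\sigma_i)\Big(\prod_{j>i}2u_{x_j}\Big)\]
exhibits $2^ku-s^2\in M$, because in each summand $\prod_{j<i}\sigma_j$ is a square, $2u_{x_i}-\sigma_i\in M$, and $\prod_{j>i}2u_{x_j}\in\sum\O[\x]^2$. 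Since also $2^ku+s^2\in M$ (both summands lie in $M$), we get $s^2\in B$, hence $I\subseteq B$, i.e. $u$ is a unit for $M\cap I$ in $I$.

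\textbf{The dichotomy.} Fix a pure state $\ph$ of $(I,M\cap I,u)$. By Theorem~\ref{dichotomy} (with $A=\O[\x]$), $\Ph\colon\O[\x]\to\R$, $a\mapsto\ph(au)$, is a ring homomorphism with $\ph(ab)=\Ph(a)\ph(b)$ for all $a\in\O[\x]$, $b\in I$, and by Lemma~\ref{stpointev} there is $x=(\Ph(X_1),\dots,\Ph(X_n))\in\R^n$ with $\Ph(p)=\st(p(x))$ for all $p$. For $p\in M$ we have $pu\in M\cap I$, so $\st(p(x))=\Ph(p)=\ph(pu)\ge0$, whence $x\in S$. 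Since $x$ is determined by $\ph$, exactly one of the mutually exclusive possibilities ``$x\notin\{\st(x_1),\dots,\st(x_k)\}$'' and ``$x=\st(x_i)$ for a (unique) $i$'' occurs; I claim these give cases (1) and (2). In the first, Lemma~\ref{coprime} makes $I_x$ coprime to each $I_{x_j}$, hence to $I$, so $I+I_x=\O[\x]$ and $I\cap I_x=I\cdot I_x$; thus $I/(I\cdot I_x)\cong\O[\x]/I_x\cong\O$ via $q\mapsto q(x)$, under which $u\mapsto u(x)\in\O^\times$ (its standard part is $\prod_j\|x-\st(x_j)\|^2>0$). Hence every $q\in I$ equals $\frac{q(x)}{u(x)}u+r$ with $r\in I\cdot I_x\subseteq\ker\Ph\cdot I$, and $\ph(q)=\frac{q(x)}{u(x)}\ph(u)+0=\st\!\big(\frac{q(x)}{u(x)}\big)$, which is case (1). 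In the second, put $r:=\prod_{j\ne i}u_{x_j}\in J:=\prod_{j\ne i}I_{x_j}^2$; then $r\in\sum\O[\x]^2$ and $c:=r(x_i)=\prod_{j\ne i}u_{x_j}(x_i)\in\O^\times$. For $p\in I_{x_i}^2$ we have $pr\in I_{x_i}^2J=I$, and $\ph'(p):=\ph(pr)$ defines a state of $(I_{x_i}^2,\sum\O[\x]^2\cap I_{x_i}^2,u_{x_i})$ (indeed $\ph'(u_{x_i})=\ph(u)=1$, and $pr\in\sum\O[\x]^2\cap I\subseteq M\cap I$ when $p\in\sum\O[\x]^2$). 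Since $\Ph(X_\ell-(x_i)_\ell)=\st((\st x_i)_\ell-(x_i)_\ell)=0$, we have $I_{x_i}\subseteq\ker\Ph$, so $\ph$ annihilates $I_{x_i}\cdot I$; writing $p\in I_{x_i}^3$ as $\sum_ta_tq_t$ with $a_t\in I_{x_i}$, $q_t\in I_{x_i}^2$ gives $\ph'(p)=\sum_t\Ph(a_t)\ph(q_tr)=0$, so $\ph'|_{I_{x_i}^3}=0$. Lemma~\ref{secondtypelemma} now yields $v_1,\dots,v_n\in\R^n$ with $\sum_\ell v_\ell^Tv_\ell=1$ and $\ph'(p)=\frac12\st\big(\sum_\ell v_\ell^T(\hess p)(x_i)v_\ell\big)$ for $p\in I_{x_i}^2$. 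Finally, for $q\in I=I_{x_i}^2J$ one uses $J/(I_{x_i}J)\cong\O$ with $r\mapsto c$ to write $q=pr+e$ with $p\in I_{x_i}^2$ and $e\in I_{x_i}^3\cap I= I_{x_i}\cdot I$, so $\ph(q)=\ph'(p)$; since $e$ vanishes to order $\ge3$ and $p$ to order $\ge2$ at $x_i$, the product rule gives $(\hess q)(x_i)=c\,(\hess p)(x_i)$, and substituting turns the formula into the one of case (2).

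\textbf{Main obstacle.} The two subtle points are the telescoping identity in the unit step — where one must arrange every factor other than one distinguished element of $M$ to be a sum of squares, which forces the use of square generators of $I_{x_i}^2$ — and, in the dichotomy, the bookkeeping ensuring that all remainder terms land in $I\cdot I_x$ (resp.\ $I_{x_i}\cdot I$), precisely the part of $I$ killed by $\ph$; this in turn rests on repeatedly invoking Lemma~\ref{coprime} to trade intersections of ideals for products. (The case $k=0$ is covered by the same argument, with $I=\O[\x]$, $u=1$, and only case (1) arising.)
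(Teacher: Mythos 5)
Your proof follows the paper's three-step architecture (coprimality via Lemma~\ref{coprime}, unit property via Proposition~\ref{bammodule}, dichotomy via Theorem~\ref{dichotomy}, Lemma~\ref{stpointev} and Lemma~\ref{secondtypelemma}) and is sound in essence, but the internal mechanics differ in two places and there is one genuine gap. For the unit property you decompose $I$ into square generators $s^2=\sigma_1\cdots\sigma_k$ and telescope $2^ku-s^2$; the paper reduces instead to showing $Nu - a_1^2\cdots a_k^2 \in M$ for arbitrary $a_i\in I_{x_i}$, invoking Lemma~\ref{ixunit} for $N_iu_{x_i}-a_i^2\in M$ and an analogous telescoping induction. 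Both are correct; yours avoids Lemma~\ref{ixunit} at the cost of naming explicit square generators. For the dichotomy formulas your ideal decompositions $q=\lambda u+r$ and $q=pr+e$ (plus the Hessian Leibniz rule at $x_i$) are valid but longer than necessary: the relation $\ph(ab)=\Ph(a)\ph(b)$ gives $\st(p(x))=\Ph(u)\ph(p)$ directly in case~(1), and in case~(2) the formula Lemma~\ref{secondtypelemma} yields for $\ps:=c\,\ph|_I$ already applies to every $p\in I\subseteq I_{x_i}^2$ without any decomposition.

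The genuine gap is the ``exactly one'' claim. Your argument shows that the specific point $x$ built from $\Ph$ falls into exactly one of the two sub-alternatives, hence \emph{at least} one of (1), (2) holds; but it does not rule out that both hold, since a priori (1) might be witnessed by a different $x'\ne x$. The paper closes this by evaluating at $u^2\in I$: in case~(1) one has $\ph(u^2)=\st(u(x))\ne 0$, while in case~(2) one has $\ph(u^2)=0$ because $(\hess(u^2))(x_i)=0$ (as $u$ vanishes to order $\ge 2$ at each $x_i$). You should add this step, or alternatively observe that any $x'$ witnessing (1) is forced to equal $x$, since then $\Ph(p)=\ph(pu)=\st\bigl(p(x')u(x')/u(x')\bigr)=\st(p(x'))$ for all $p\in\O[\x]$, which pins down $x'=x$.
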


\begin{proof}
The Chinese remainder theorem from commutative algebra shows that
\[I=I_{x_1}^2\dotsm I_{x_k}^2=I_{x_1}^2\cap\ldots\cap I_{x_k}^2\]
since $I_{x_i}$ and $I_{x_j}$ and thus also $I_{x_i}^2$ and $I_{x_j}^2$ are coprime for all
$i,j\in\{1,\dots,k\}$ with $i\ne j$.
By Lemma \ref{ixunit}, $u_{x_i}$ is a unit for $M\cap I_{x_i}^2$ in $I_{x_i}^2$
for each $i\in\{1,\dots,k\}$. To show that $u$ is a unit
for the cone $M\cap I$ in the real vector space $I$, it suffices to find for all
$a_1,b_1\in I_{x_1},\dots,a_k,b_k\in I_{x_k}$ an $N\in\N$ such that
$Nu+ab\in M$ where we set $a:=a_1\dotsm a_k$ and $b:=b_1\dotsm b_k$.
Because of $Nu+ab=(Nu-\frac12a^2-\frac12b^2)+\frac12(a+b)^2$, it is enough to find $N\in\N$ with
$Nu-a^2\in M$ and $Nu-b^2\in M$. By symmetry, it suffices to find $N\in\N$ with
$Nu-a^2\in M$. Choose $N_i\in\N$ with $N_iu_{x_i}-a_i^2\in M$ for $i\in\N$. We now claim that
$N:=N_1\dotsm N_k$ does the job. Indeed, the reader shows easily by induction that actually
\[N_1\dotsm N_iu_{x_1}\dotsm u_{x_i}-a_1^2\dotsm a_i^2\in M\]
for $i\in\{1,\dots,k\}$. Now let $\ph$ be a pure state of $(I,M\cap I,u)$.
By Theorem \ref{dichotomy},
\[\Ph\colon\O[\x]\to\R,\ p\mapsto\ph(pu)\] is a ring homomorphism 
and we have
\[(*)\qquad \ph(pq)=\Ph(p)\ph(q)\]
for all $p\in\O[\x]$ and $q\in I$.
By Lemma \ref{stpointev}, we can choose $x\in\R^n$ such that
\[\Ph(p)=\st(p(x))\]
for all $p\in\O[\x]$.
Since $u\in I\cap\sum\O[\x]^2$, we have
\[\st(p(x))=\Ph(p)=\Ph(p)\ph(u)\overset{(*)}=\ph(pu)=\ph(up)\overset{up\in M}\in\ph(M)\subseteq\R_{\ge0}\]
for all $p\in M$ which means $x\in S$.

\smallskip
Now first suppose that $\Ph(u)\ne0$.
Then $\st(u_{x_i}(x))\ne0$ and therefore $\st(x)\ne\st(x_i)$ for all $i\in\{1,\dots,k\}$. Moreover,
\[\st(p(x))=\Ph(p)=\ph(pu)=\ph(up)\overset{(*)}=\Ph(u)\ph(p)=\st(u(x))\ph(p)\] for all $p\in I$.
Thus, (1) occurs.

\smallskip
Now suppose that $\Ph(u)=0$. We show that then (2) occurs.
Due to
\[\prod_{i=1}^k\Ph(u_{x_i})=
\Ph(u)=0,\]
 we can choose $i\in\{1,\dots,k\}$ such that $\st(u_{x_i}(x))=\Ph(u_{x_i})=0$.
Then $x=\st(x_i)$. Define
\[\ps\colon I_{x_i}^2\to\R,\ p\mapsto\ph\left(p\prod_{\substack{j=1\\j\ne i}}^ku_{x_j}\right).
\]
Since $u_{x_j}\in\sum\O[\x]^2\cap I_{x_j}^2$ for all $j\in\{1,\dots,k\}$, it follows that
$\ps\in S(I_{x_i}^2,M\cap I_{x_i}^2,u_{x_i})$. If $p\in I_{x_i}$ and $q\in I_{x_i}^2$, then
\[\ps(pq)=\ph\left(pq\prod_{\substack{j=1\\j\ne i}}^ku_{x_j}\right)
\overset{(*)}=
\Ph(p)\ph\left(q\prod_{\substack{j=1\\j\ne i}}^ku_{x_j}\right)=0\]
since $\Ph(p)=\st(p(x))=(\st(p))(x)=(\st(p))(\st(x_i))=\st(p(x_i))=\st(0)=0$.
It follows that $\ps|_{I_{x_i}^3}=0$.
We can thus apply Lemma \ref{secondtypelemma} to $\ps$ and obtain
$v_1,\dots,v_n\in\R^n$ such that
$\sum_{\ell=1}^nv_\ell^Tv_\ell=1$ and
\[\ps(p)=\frac12\st\left(\sum_{\ell=1}^nv_\ell^T(\hess p)(x_i)v_\ell\right)\]
for all $p\in I_{x_i}^2$.
Because of $\st(x_i)\ne\st(x_j)$ for $j\in\{1,\dots,k\}\setminus\{i\}$, we have
\[c:=\Ph\left(\prod_{\substack{j=1\\j\ne i}}^k u_{x_i}\right)=\prod_{\substack{j=1\\j\ne i}}^k\Ph(u_{x_i})=
\prod_{\substack{j=1\\j\ne i}}^k(\st(u_{x_j}))(\st(x_i))\ne0.\]
Hence we obtain
\[c\ph(p)\overset{(*)}=\ps(p)\]
for all $p\in I$.

\smallskip
It only remains to show that (1) and (2) cannot occur both at the same time. If (1) holds, then
we have obviously $\ph(u^2)\ne0$. If (2) holds, then $\ph(u^2)=0$ since $\hess(u^2)(x_i)=0$ for
all $i\in\{1,\dots,k\}$ as one easily shows.
\end{proof}

\subsection{Nonnegative polynomials over real closed fields}\label{subs:nnrcf}

In this subsection, we collect the consequences for sums of squares representations of nonnegative polynomials that
will be important for our applications to both: systems of polynomial inequalities and polynomial optimization problems.
We begin with a remark that shows that $I_x^2$ consist of those polynomials over $\O$ that have a \emph{double zero} at $x$.

\begin{lem}\label{membershipix}
For all $x\in\O^n$, we have
\[I_x^2=\left\{p\in\O[\x]\mid p(x)=0,\nabla p(x)=0\right\}.\]
\end{lem}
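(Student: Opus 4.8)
The plan is to reduce to the case $x=0$, exactly as in the proof of Lemma~\ref{ixunit}. The $\R$-algebra automorphism $\si\colon\O[\x]\to\O[\x]$, $p\mapsto p(X_1-x_1,\dots,X_n-x_n)$ sends $X_i$ to $X_i-x_i$, hence maps $I_0=(X_1,\dots,X_n)$ onto $I_x$ and therefore $I_0^2$ onto $I_x^2$. Its inverse $q\mapsto q(X_1+x_1,\dots,X_n+x_n)$ is a translation, so $(\si^{-1}q)(0)=q(x)$ and, by the chain rule, $\nabla(\si^{-1}q)(0)=(\nabla q)(x)$. Thus $q$ lies in the right hand side of the asserted equality if and only if $\si^{-1}q$ lies in the analogous set for $x=0$, and $q\in I_x^2$ if and only if $\si^{-1}q\in I_0^2$. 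So it suffices to prove \[I_0^2=\{p\in\O[\x]\mid p(0)=0,\ \nabla p(0)=0\}.\]

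For the inclusion ``$\subseteq$'', recall that $I_0=(X_1,\dots,X_n)$, so $I_0^2$ is generated by the products $X_iX_j$ with $i,j\in\{1,\dots,n\}$. Writing a general element of $I_0^2$ as $p=\sum_{i,j}q_{ij}X_iX_j$ with $q_{ij}\in\O[\x]$, one has $p(0)=0$ at once, and for each $k$ the product rule gives $\partial_kp=\sum_{i,j}(\partial_kq_{ij})X_iX_j+\sum_{i,j}q_{ij}(\de_{ki}X_j+\de_{kj}X_i)$, which vanishes at $0$ term by term; hence $\nabla p(0)=0$.

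For the reverse inclusion ``$\supseteq$'', take $p=\sum_\al a_\al\x^\al\in\O[\x]$ with $p(0)=0$ and $\nabla p(0)=0$. Evaluating at $0$ gives $a_0=0$, and reading off the $k$-th partial derivative at $0$ gives $a_{e_k}=0$ for all $k$; hence $p=\sum_{|\al|\ge2}a_\al\x^\al$. Every monomial $\x^\al$ with $|\al|\ge2$ splits as $\x^\al=\x^\beta\x^\gamma$ with $|\beta|\ge1$ and $|\gamma|\ge1$, whence $\x^\beta,\x^\gamma\in I_0$ and $\x^\al\in I_0I_0=I_0^2$. Therefore $p\in I_0^2$.

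I do not expect a genuine obstacle here: this is essentially a bookkeeping statement about low-order coefficients. The only points worth spelling out are that the translation automorphism simultaneously respects the ideal power $I_x^2$ and the ``value and gradient vanish'' condition, and that the whole argument goes through verbatim over the ring $\O$, since it never uses that $\O$ is a field.
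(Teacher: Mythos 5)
Your proof is correct and follows exactly the route the paper sketches: reduce to $x=0$ via the translation automorphism as in Lemma~\ref{ixunit}, then verify the case $x=0$ directly. You have merely filled in the details that the paper leaves as ``easy''.
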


\begin{proof}
For $x=0$ it is easy. One reduces the general case to the case $x=0$ as in the proof of
Lemma~\ref{ixunit}.
\end{proof}

In the following key theorem, note that the positivity conditions on $f$ actually depend just on the standard part $\st(f)$
of $f$ (see Subsection \ref{subs:rcf}) but it will be essential to us that $f$ needs to vanish doubly not
necessarily \emph{at} the standard parts of the $x_i$ but only infinitesimally \emph{nearby}.
This key theorem will be used directly in both Theorem \ref{putinarzerosdegreebound} which has
implications on polynomial optimization and Theorem \ref{linearstability} which is important for solving systems of polynomial
inequalities.

\begin{thm}\label{mainrep}
Let $M$ be an Archimedean quadratic module of $\O[\x]$ and set
\[S:=\{x\in\R^n\mid\forall p\in M:\st(p(x))\ge0\}.\]
Moreover, suppose $k\in\N_0$ and let $x_1,\ldots,x_k\in\O^n$ have pairwise distinct standard parts. Let \[f\in\bigcap_{i=1}^kI_{x_i}^2\]
such that \[\st(f(x))>0\] for all $x\in S\setminus\{\st(x_1),\ldots,\st(x_k)\}$ and
\[\st(v^T(\hess f)(x_i)v)>0\] for all $i\in\{1,\dots,k\}$ and $v\in\R^n\setminus\{0\}$.
Then $f\in M$.
\end{thm}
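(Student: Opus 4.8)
The plan is to derive Theorem \ref{mainrep} from the pure state dichotomy in Theorem \ref{dicho} together with the abstract Positivstellensatz for cones with a unit, Corollary \ref{conemembershipextr}. First I would set $u := u_{x_1}\dotsm u_{x_k}$ and $I := I_{x_1}^2\cap\ldots\cap I_{x_k}^2 = I_{x_1}^2\dotsm I_{x_k}^2$, exactly as in Theorem \ref{dicho}. By Lemma \ref{membershipix}, the hypothesis $f\in\bigcap_{i=1}^k I_{x_i}^2$ says precisely that $f\in I$, so $f$ lies in the real vector space on which the pure states act. Theorem \ref{dicho} tells us that $u$ is a unit for the cone $C := M\cap I$ in $I$. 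Hence by Corollary \ref{conemembershipextr}, to conclude $f\in M\cap I\subseteq M$ it suffices to show $\ph(f)>0$ for every pure state $\ph$ of $(I, M\cap I, u)$.

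So the core of the argument is a case distinction along the dichotomy. Fix a pure state $\ph$. In case (1) of Theorem \ref{dicho}, there is $x\in S\setminus\{\st(x_1),\dots,\st(x_k)\}$ with $\ph(f) = \st\bigl(f(x)/u(x)\bigr)$. Here I would argue that $u(x) = \prod_{i=1}^k u_{x_i}(x)$ has a nonzero, indeed \emph{positive}, standard part: each $u_{x_i}(x) = \sum_j (x_j - (x_i)_j)^2$ is a finite nonnegative element of $R$, and $\st(u_{x_i}(x)) = \|x - \st(x_i)\|_2^2 > 0$ because $x\neq\st(x_i)$. Thus $\st(u(x)) > 0$, and since $\st$ is a ring homomorphism on $\O_R$ (and on $\O_R$ localized away from $\m_R$, quotients of finite elements with nonzero-standard-part denominator are again finite), we get $\ph(f) = \st(f(x))/\st(u(x))$. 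The hypothesis $\st(f(x)) > 0$ for all such $x$ then gives $\ph(f) > 0$.

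In case (2), there is $i\in\{1,\dots,k\}$ and $v_1,\dots,v_n\in\R^n$ with $\sum_{\ell=1}^n v_\ell^T v_\ell = 1$ and
\[
\ph(f) = \st\!\left(\frac{\sum_{\ell=1}^n v_\ell^T(\hess f)(x_i)v_\ell}{2\prod_{j\ne i}u_{x_j}(x_i)}\right).
\]
The denominator again has positive standard part $2\prod_{j\ne i}\|\st(x_i) - \st(x_j)\|_2^2 > 0$ by pairwise distinctness of the standard parts, so $\ph(f) = \bigl(\sum_{\ell} \st(v_\ell^T(\hess f)(x_i)v_\ell)\bigr)\big/\bigl(2\prod_{j\ne i}\st(u_{x_j}(x_i))\bigr)$. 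Now the hypothesis $\st(v^T(\hess f)(x_i)v) > 0$ for all $v\in\R^n\setminus\{0\}$ handles each summand: since $\sum_\ell v_\ell^T v_\ell = 1$, at least one $v_\ell$ is nonzero, so that summand is strictly positive, while every other summand is nonnegative (it equals $\st$ of a nonnegative real, namely $\st(\st(v^T\hess f(x_i)v))$ applied with $v$ possibly zero — using that $\st(v^T(\hess f)(x_i)v)\ge 0$ for all $v$, which follows by continuity/limiting from the strict inequality, or directly since we may also feed $v=0$). Hence the numerator has positive standard part and $\ph(f) > 0$.

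Having verified $\ph(f) > 0$ in both cases, Corollary \ref{conemembershipextr} yields $f\in M\cap I$, hence $f\in M$, completing the proof. The main obstacle I anticipate is purely bookkeeping: making the manipulation $\st(a/b) = \st(a)/\st(b)$ rigorous requires noting that when $b$ is finite with $\st(b)\ne 0$ then $b\in\O_R^\times$ and $a/b\in\O_R$, and that $\st$ is multiplicative there; and in case (2) one must be slightly careful that the expression $v^T(\hess f)(x_i)v$ is \emph{a priori} an element of $R$ (since $x_i\in\O^n$ and $f\in\O[\x]$, so $(\hess f)(x_i)\in\O^{n\times n}$), that it is finite, and that the hypothesis is stated for its standard part — but this is exactly how the theorem is phrased, so no extra work is needed beyond invoking that $\st$ of a finite nonnegative element is nonnegative. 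There are no deep difficulties; all the real content is already packed into Theorem \ref{dicho}.
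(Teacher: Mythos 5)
Your proof is correct and follows exactly the same route as the paper: reduce to pure states via Corollary \ref{conemembershipextr}, then run a case distinction along the dichotomy of Theorem \ref{dicho}. The paper compresses the verification that $\ph(f)>0$ into the phrase ``we see easily,'' while you have written out the standard-part bookkeeping (positivity of the denominators, multiplicativity of $\st$ on units of $\O_R$, handling of zero directions $v_\ell$); those details are accurate.
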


\begin{proof}
Define $I$ and $u$ as in Theorem \ref{dicho}. By Lemma \ref{membershipix}, we have $f\in I$.
We will apply Corollary \ref{conemembershipextr} to the real vector space $I$, the cone $M\cap I$ in $I$ and
the unit $u$ for $M\cap I$. From Theorem \ref{dicho}, we see easily that $\ph(f)>0$ for all
$\ph\in\extr S(I,M\cap I,u)$.
\end{proof}

\begin{cor}\label{mainrep2}
Let $M$ be an Archimedean quadratic module of $\O[\x]$ and set
\[S:=\{x\in\R^n\mid\forall p\in M:\st(p(x))\ge0\}.\]
Moreover, let $k\in\N_0$ and $x_1,\ldots,x_k\in\O^n$ such that their standard parts
are pairwise distinct and lie in the interior of $S$. 
Let $f\in\O[\x]$ such that
\[f\in\bigcap_{i=1}^kI_{x_i}^2.\]
Set again $u:=u_{x_1}\dotsm u_{x_k}\in\O[\x]$.
Suppose there is $\ep\in\R_{>0}$ such
that \[f\ge\ep u\text{ on }S.\]
Then $f\in M$.
\end{cor}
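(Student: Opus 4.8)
The strategy is to reduce Corollary \ref{mainrep2} to Theorem \ref{mainrep}. The two hypotheses I need to verify are the positivity of $\st(f)$ away from the standard parts of the $x_i$ and the positive definiteness (after taking standard parts) of the Hessian of $f$ at each $x_i$. Both should follow from the single inequality $f \ge \ep u$ on $S$ together with the fact that the $\st(x_i)$ lie in the \emph{interior} of $S$.

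First I would handle the points $x \in S \setminus \{\st(x_1),\dots,\st(x_k)\}$. For such $x$ we have $u(x) = u_{x_1}(x)\dotsm u_{x_k}(x)$, and each factor $u_{x_j}(x) = \|x - x_j\|_2^2$ has standard part $\|x - \st(x_j)\|^2 > 0$ since $x \ne \st(x_j)$; hence $\st(u(x)) > 0$, so $u(x) \notin \m$. From $f(x) \ge \ep u(x)$ and $\ep \in \R_{>0}$ we get $\st(f(x)) \ge \ep\, \st(u(x)) > 0$. This gives the first hypothesis of Theorem \ref{mainrep}.

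The more delicate part is the Hessian condition at each $x_i$; this is the main obstacle. Fix $i$ and write $c_i := \prod_{j \ne i} u_{x_j}(x_i) \in \O$; as in the previous step, $\st(c_i) = \prod_{j\ne i}\|\st(x_i) - \st(x_j)\|^2 > 0$ because the standard parts are pairwise distinct. Now $f \in I_{x_i}^2$, so $f$ vanishes to second order at $x_i$, and near $x_i$ one has the Taylor-type expansion $f(x_i + t v) = \tfrac12 t^2 v^T (\hess f)(x_i) v + O(t^3)$ for $v \in \R^n$; similarly $u(x_i + tv) = c_i \cdot t^2 v^T v + O(t^3)$ since $u_{x_i}(x_i + tv) = t^2 v^Tv$ and the remaining factors evaluate to $c_i$ at $x_i$ plus lower order. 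The inequality $f \ge \ep u$ holds on all of $S$, and since $\st(x_i)$ is in the interior of $S$, the transfer $S_R$ contains a full ball around $x_i$ in $R^n$, so for infinitesimal $t$ and fixed $v \in \R^n \setminus \{0\}$ the point $x_i + tv$ lies in $S_R$ — here I must be careful to transfer the statement ``$f \ge \ep u$ on $S$'' to ``$f \ge \ep u$ on $S_R$'', which follows from the description of $S$ via $M$ and the transfer principle, or more directly one argues that $S_R \supseteq$ a ball around each interior point of $S$. Dividing the inequality $f(x_i + tv) \ge \ep\, u(x_i+tv)$ by $t^2$ and taking standard parts (using that $t^3/t^2 = t$ is infinitesimal), I obtain $\tfrac12 \st(v^T (\hess f)(x_i) v) \ge \ep\, \st(c_i)\, v^T v > 0$, which is exactly the second hypothesis of Theorem \ref{mainrep}.

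Having verified both hypotheses, Theorem \ref{mainrep} immediately yields $f \in M$. The only subtlety to be careful about in writing this up is making the Taylor-expansion argument rigorous over the real closed field $R$ rather than over $\R$ — one either invokes the formal Taylor expansion of polynomials (which is purely algebraic and valid over any ring) to write $f(x_i + tv)$ and $u(x_i + tv)$ as explicit polynomials in $t$ with coefficients in $\O$, or one picks a small but non-infinitesimal ball around $\st(x_i)$ inside $S$, transfers, and then specializes $t$ to an infinitesimal. I would use the algebraic Taylor expansion, since it keeps everything inside $\O[\x]$ and makes the ``divide by $t^2$ and take standard part'' step completely transparent.
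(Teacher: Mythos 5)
Your overall strategy is the right one and matches the paper's: reduce to Theorem~\ref{mainrep} by verifying the two positivity hypotheses.  The argument for the first hypothesis (positivity of $\st(f)$ on $S\setminus\{\st(x_1),\dots,\st(x_k)\}$) is correct.  However, the argument for the Hessian condition contains a genuine gap.

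The problematic step is your claim that the inequality $f\ge\ep u$ on $S\subseteq\R^n$ can be transported to points of $R^n$ infinitesimally close to $\st(x_i)$, i.e.\ that $(f-\ep u)(x_i+tv)\ge0$ for infinitesimal $t$.  This does \emph{not} follow from the hypothesis, and cannot be obtained by the Tarski transfer principle: the polynomial $f-\ep u$ has coefficients in $\O_R$ rather than in $\R$, so ``$(f-\ep u)(x)\ge 0$ for all $x\in B$'' (for a real ball $B$) is not a formula in the sense of Subsection~\ref{subs:tarski} and cannot be transferred from $\R$ to $R$.  Concretely, take $n=1$, $x_1=0$, a positive infinitesimal $\theta\in\m_R$, $u=u_{x_1}=X^2$, and
\[
f:=X^6-\theta X^4+\ep X^2\in\O_R[X],
\]
which lies in $I_0^2$.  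For real $x$ one has $f(x)-\ep x^2=x^4(x^2-\theta)\ge0$, so $f\ge\ep u$ on any $S\subseteq\R$; yet $f(\theta)-\ep\theta^2=\theta^5(\theta-1)<0$.  So your intermediate inequality fails at the infinitesimal point $t=\theta$, and the ``divide by $t^2$ and take standard parts'' step rests on a false premise.  (The conclusion of the corollary is still true in this example, but your derivation of it breaks down.)

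The repair is exactly to take standard parts \emph{before} doing the local analysis, and this is what the paper does.  From $f-\ep u\ge0$ on $S$ and $(f-\ep u)(x_i)=0$, one gets that the \emph{real} polynomial $\st(f-\ep u)\in\R[\x]$ is nonnegative on $S$ and vanishes at $\st(x_i)\in S^\circ$; hence $\st(x_i)$ is a local minimizer of $\st(f-\ep u)$ on $\R^n$, and the classical second-order necessary condition gives $\bigl(\hess\,\st(f-\ep u)\bigr)(\st(x_i))\succeq0$.  Since
\[
(\hess u)(x_i)=2\left(\prod_{j\ne i}u_{x_j}(x_i)\right)I_n
\]
(because $u_{x_i}(x_i)=0$ and $\nabla u_{x_i}(x_i)=0$), this yields, for $v\in\R^n\setminus\{0\}$,
\[
\st\!\left(v^T(\hess f)(x_i)v\right)\ \ge\ \ep\,\st\!\left(v^T(\hess u)(x_i)v\right)\ =\ 2\ep\,v^Tv\,\st\!\left(\prod_{j\ne i}u_{x_j}(x_i)\right)\ >\ 0,
\]
which is the required strict inequality.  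In short: your Taylor-expansion intuition is correct, but it must be applied to $\st(f-\ep u)\in\R[\x]$ over $\R$, not to $f-\ep u\in\O_R[\x]$ over $R$; the latter cannot be controlled at nonstandard points by the hypothesis alone.
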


\begin{proof}
By Theorem \ref{mainrep}, we have to show:
\begin{enumerate}[(a)]
\item $\forall x\in S\setminus\{\st(x_1),\ldots,\st(x_k)\}:\st(f(x))>0$
\item $\forall i\in\{1,\dots,k\}:\forall v\in\R^n\setminus\{0\}:\st(v^T(\hess f)(x_i)v)>0$
\end{enumerate}
It is easy to show (a). To show (b), fix $i\in\{1,\ldots,k\}$.
Because of $f-\ep u\ge0$ on $S$ and \[(f-\ep u)(x_i)=f(x_i)-\ep u(x_i)=0-0=0,\]
$\st(x_i)$ is a local minimum of $\st(f-\ep u)\in\R[\x]$ on $\R^n$. From
elementary analysis, we know therefore that $(\hess\st(f-\ep u))(\st(x_i))\succeq0$.
Because of $u_{x_i}(x_i)=0$ and $\nabla u_{x_i}(x_i)=0$, we get
\[\hess u(x_i)=
\left(\prod_{\substack{j=1\\j\ne i}}^ku_{x_j}(x_i)\right)\hess u_{x_i}(x_i)=
2\left(\prod_{\substack{j=1\\j\ne i}}^ku_{x_j}(x_i)\right)I_n.
\]
Therefore
\[\st(v^T(\hess f)(x_i)v)\ge\ep\st(v^T(\hess u)(x_i)v)=2\ep v^Tv\st\left(\prod_{\substack{j=1\\j\ne i}}^ku_{x_j}(x_i)\right)>0\]
for all $v\in\R^n\setminus\{0\}$.
\end{proof}

\begin{cor}\label{mainrep3}
Let $n,m\in\N_0$ and let $\g\in\R[\x]^m$ such that $M(\g)$ is Archimedean.
Moreover, let $k\in\N_0$ and $x_1,\ldots,x_k\in\O^n$ and $\ep\in\R_{>0}$ such that
the sets
\[x_1+(B_\ep(0))_R,\ldots,x_k+(B_\ep(0))_R\]
are pairwise disjoint and all contained in $S(\g)_R$.
Set $u:=u_{x_1}\dotsm u_{x_k}\in\O[\x]$.
Let $f\in\O[\x]$ such that $f\ge\ep u$ on $S(\g)$ and
\[f(x_1)=\ldots=f(x_k)=0.\]
Then $f$ lies in the quadratic module generated by $\g$ in $\O[\x]$.
\end{cor}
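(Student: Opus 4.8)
The plan is to deduce this from Corollary~\ref{mainrep2}. Let $M$ be the quadratic module generated by $\g$ in $\O[\x]$; the statement to be proved is exactly $f\in M$. First I would note that $M$ is Archimedean: since $M(\g)$ is Archimedean in $\R[\x]$, Proposition~\ref{archmodulechar} produces $N\in\N$ with $N-(X_1^2+\ldots+X_n^2)\in M(\g)\subseteq M$, whence $M$ is Archimedean by Proposition~\ref{archmodulecharrcf}. Next I would identify the set $S:=\{x\in\R^n\mid\forall p\in M:\st(p(x))\ge0\}$ attached to $M$ with the real solution set $S(\g)$: for a real point $x$ and $p=\sum_{i,j}p_{ij}^2g_i\in M$ one has $p(x)=\sum_{i,j}p_{ij}(x)^2g_i(x)$, which is $\ge0$ in $\O$ as soon as $g_1(x)\ge0,\dots,g_m(x)\ge0$; since $\st$ is order preserving this gives $S(\g)\subseteq S$, and since $g_1,\dots,g_m\in M$ and $\st$ fixes $\R$, a point of $S$ satisfies $g_i(x)=\st(g_i(x))\ge0$, so $S\subseteq S(\g)$.

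Second, I would extract from the ball hypothesis the two facts that Corollary~\ref{mainrep2} requires. If $\st(x_i)=\st(x_j)$ for some $i\ne j$, then $\|x_i-x_j\|_2\in\m$, so $x_j$ lies in both $x_i+(B_\ep(0))_R$ and $x_j+(B_\ep(0))_R$, contradicting their disjointness; hence the standard parts are pairwise distinct. If $y\in\R^n$ with $\|y-\st(x_i)\|<\ep/2$, then $\|y-x_i\|_2\le\|y-\st(x_i)\|_2+\|\st(x_i)-x_i\|_2<\ep$, the last summand being infinitesimal, so $y\in x_i+(B_\ep(0))_R\subseteq S(\g)_R$ and, being real, $y\in S(\g)$; hence $\st(x_i)\in S(\g)^\circ=S^\circ$.

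The heart of the matter is to verify $f\in\bigcap_{i=1}^kI_{x_i}^2$. Since $u_{x_i}(x_i)=0$ and $\nabla u_{x_i}(x_i)=0$, the product rule gives $u(x_i)=0$ and $\nabla u(x_i)=0$, so $h:=f-\ep u$ satisfies $h(x_i)=0$. Reading the hypothesis $f\ge\ep u$ over $R$, that is, as an inequality valid on all of $S(\g)_R$ rather than only on its real points (this is essential: on the real points alone an infinitesimal amount of slack could leave $\nabla f(x_i)\ne0$), we get $h\ge0$ on $S(\g)_R$, and since $x_i+(B_\ep(0))_R\subseteq S(\g)_R$ the point $x_i$ minimizes $h$ on an $\ep$-ball in $R^n$. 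For each coordinate $\ell$ the univariate polynomial $q(t):=h(x_i+te_\ell)\in R[t]$ has $q(0)=0$ and $q'(0)=\partial_\ell h(x_i)$; were this derivative nonzero, then writing $q(t)=t\bigl(\partial_\ell h(x_i)+tr(t)\bigr)$ with $r\in R[t]$ and using that a polynomial over the real closed field $R$ has constant sign near a point where it does not vanish (intermediate value theorem), one would find $t$ with $|t|<\ep$ and $q(t)<0$, contradicting $h\ge0$ on $S(\g)_R$. Hence $\nabla h(x_i)=0$, so $\nabla f(x_i)=\ep\nabla u(x_i)=0$, and together with $f(x_i)=0$ Lemma~\ref{membershipix} gives $f\in I_{x_i}^2$ for each $i$. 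Now every hypothesis of Corollary~\ref{mainrep2} holds for this $M$, this $S=S(\g)$, these $x_1,\dots,x_k$, and the same $u$ and $\ep$ (in particular $f\ge\ep u$ on $S=S(\g)\subseteq S(\g)_R$), so that corollary yields $f\in M$, as desired. I expect this third step to be the main obstacle: one has to show $f$ vanishes to second order at the points $x_i$ themselves and not merely at their standard parts, which cannot be read off from nonnegativity on the real points of $S(\g)$ and forces one to argue with the inequality over $R$ and to replace differential calculus by the intermediate value theorem.
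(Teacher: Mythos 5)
Your proof is correct and follows the same route as the paper at the macro level: reduce to Corollary~\ref{mainrep2} after establishing $\nabla f(x_i)=0$. Two of its ingredients differ.

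For the key step $\nabla f(x_i)=0$, the paper invokes the Tarski transfer principle: it transfers from $\R$ to $R$ the first-order statement that a polynomial of degree at most $d$ which is nonnegative on a ball and vanishes at its centre has vanishing gradient there, and then applies the transferred statement over $R$. You instead restrict $h:=f-\ep u$ to lines $t\mapsto x_i+te_\ell$, factor out $t$, and use that a polynomial over a real closed field has constant sign near a point of non-vanishing. Both arguments are valid; yours is more hands-on and avoids the transfer machinery for this step, the paper's is shorter once that machinery is taken for granted. You also spell out the verification of the auxiliary hypotheses of Corollary~\ref{mainrep2} (Archimedeanness of the quadratic module in $\O[\x]$, $S=S(\g)$, pairwise distinct standard parts, interiority of the standard parts) which the paper leaves to the reader; these are all fine.

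You also make an observation that the paper glosses over: the hypothesis ``$f\ge\ep u$ on $S(\g)$'' must be understood as an inequality over $R$, i.e.\ as $f-\ep u\ge 0$ on $S(\g)_R$. The paper's proof writes ``$f\ge\ep u\ge0$ on $S$ and thus $f\ge0$ on $x_i+(B_\ep(0))_R$,'' but the second inequality concerns points of $R^n$ and does not follow from nonnegativity at the real points of $S(\g)$ alone. Your diagnosis is correct, and with the literal real-point reading the conclusion can actually fail. For instance take $n=1$, $g:=1-X^2$, choose $\eta\in\m\setminus\{0\}$ with $\eta>0$, and set $x_1:=\eta$, $u:=(X-\eta)^2$, $\ep:=\frac12$, $f:=(X-\eta)^2+\eta^2(X-\eta)$. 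One checks $f(x_1)=0$ and $f-\frac12 u=(X-\eta)\bigl(\frac12(X-\eta)+\eta^2\bigr)\ge0$ at every real point of $[-1,1]$, yet $\nabla f(x_1)=\eta^2\ne0$; and any element $p$ of the quadratic module generated by $g$ in $\O[X]$ that vanishes at $\eta$ must have vanishing gradient there (since $g(\eta)>0$ forces each sum-of-squares coefficient to vanish at $\eta$, hence to have a local minimum there), so $f$ is not in that module. Reading the hypothesis over $R$, as you do, is the intended interpretation, and it is also what arises naturally when the corollary is deployed inside the quantifier-elimination argument in the proof of Theorem~\ref{putinarzerosdegreebound}.
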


\begin{proof}
This follows easily from Corollary \ref{mainrep2} once we show that \[\nabla f(x_1)=\ldots=\nabla f(x_k)=0.\]
Choose $d\in\N_0$ with $f\in R[\x]_d$.
Since $f\ge\ep u\ge0$ on $S$ and thus $f\ge0$ on $x_i+(B_\ep(0))_R$ for all $i\in\{1,\ldots,k\}$, it suffices
to prove the following: If $p\in R[\x]_d$, $x\in R^n$, $\de\in R_{>0}$ such that $p\ge0$ on $x+(B_\de(0))_R$ and
$p(x)=0$, then $\nabla p(x)=0$. To see this, we employ the Tarski transfer principle:  One can formulate as a formula
that $\nabla p(x)=0$ holds for all $p\in\R[\x]_d$, $x\in\R^n$ and $\de\in\R_{>0}$ that satisfy $p\ge0$ on
$x+(B_\de(0))_R=B_\de(x)$ and $p(x)=0$.
Since this holds true by elementary calculus, we can transfer it to the real closed field $R$.
\end{proof}

\section{Membership in truncated quadratic modules}

In this section, we study the implications of Subsection \ref{subs:nnrcf} to sums of squares representations of
\emph{arbitrary degree} polynomials. In Subsection \ref{subs:cons-pop}, we will see the implications to polynomial optimization. 

\subsection{Degree bounds for Scheiderer's generalization of Putinar's Positivstellensatz}

Scheiderer gave far-reaching generalizations of Putinar's Positivstellensatz to nonnegative polynomials with zeros
\cite{s1,mar}. An important concrete instance of this is Corollary \ref{putinarzeros} below (Putinar's theorem
\cite[Lemma 4.1]{put} corresponds to the special case $k=0$ of that corollary). On the other hand, Prestel \cite{pre} proved
the existence of degree bounds for Putinar's theorem, more precisely he proved the statement of
Corollary \ref{putinardegreebound} below (this is not literally stated in his article but follows clearly from \cite[Section 4]{pre},
see also \cite[Chapter 8]{pd1}).
The next theorem is a common generalization of both:

\begin{thm}
\label{putinarzerosdegreebound}
Let $n,m\in\N_0$ and $\g\in\R[\x]^m$ such that $M(\g)$ is Archimedean.
Moreover, let $k\in\N_0$, $N\in\N$ and $\ep\in\R_{>0}$.
Then there exists \[d\in\N_0\] such that for all $f\in\R[\x]_N$ with all coefficients in $[-N,N]$, we have for
\[\{x\in S(\g)\mid f(x)=0\}=\{x_1,\ldots,x_k\}:\]
If the balls $B_{\ep}(x_1),\ldots,B_\ep(x_n)$ are pairwise disjoint and contained in $S(\g)$ and if we have
$f\ge\ep u$ on $S(\g)$
where $u:=u_{x_1}\dotsm u_{x_k}\in\R[\x]$, then \[f\in M_d(\g).\]
\end{thm}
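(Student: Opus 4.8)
The plan is to deduce this degree bound from the real-closed-field result Corollary~\ref{mainrep3} by a compactness argument based on the finiteness theorem of Subsection~\ref{subs:tarski}. Put $D:=\dim_\R\R[\x]_N$, identify $\R[\x]_N$ with $\R^D$ by reading off coefficients, and write $f_c$ for the polynomial with coefficient vector $c$. I would let $T\subseteq\R^D$ be the set of those $c$ for which all coordinates of $c$ lie in $[-N,N]$, the set $\{x\in S(\g)\mid f_c(x)=0\}$ consists of exactly $k$ points $x_1,\dots,x_k$, the balls $B_\ep(x_1),\dots,B_\ep(x_k)$ are pairwise disjoint and each contained in $S(\g)$, and $f_c\ge\ep\,u_{x_1}\dotsm u_{x_k}$ on $S(\g)$. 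Each of these conditions can be written as a first-order formula over the reals (the cardinality of a definable finite set, finitely many equalities, and a polynomial inequality universally quantified over $y\in S(\g)$), so $T$ is semialgebraic by real quantifier elimination. For $d\in\N$ I would then set
\[
S_d:=(\R^D\setminus T)\cup\{c\in\R^D\mid f_c\in M_d(\g)\}.
\]
Since $R[\x]$ is a domain, $\deg(p^2g_i)=2\deg p+\deg g_i$, so $M_d(\g)$ is a finite sum (over the $i$ with $\deg g_i\le d$) of images of polynomial maps $(p_j)_j\mapsto(\sum_jp_j^2)g_i$ from affine spaces of bounded dimension (the bound on the number of squares coming from Carath\'eodory's theorem); hence $M_d(\g)$ is a semialgebraic subset of $\R[\x]_d$ and $S_d$ is semialgebraic. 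From $M_d(\g)\subseteq M_{d+1}(\g)$ we get $S_1\subseteq S_2\subseteq\dots$

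The next step is to check that $\bigcup_{d\in\N}(S_d)_R=R^D$ for every real closed extension field $R$ of $\R$. Fix such an $R$ and let $c\in R^D$. If $c\notin T_R$, then $c\in(S_d)_R$ for all $d$, because $S_d\supseteq\R^D\setminus T$ forces $(S_d)_R\supseteq R^D\setminus T_R$. If $c\in T_R$, then transferring the defining formula of $T$ shows that the coefficients of $f_c$ lie in $[-N,N]_R$ (so $f_c\in\O_R[\x]$), that $\{x\in S(\g)_R\mid f_c(x)=0\}$ consists of exactly $k$ points $x_1,\dots,x_k\in R^n$, which moreover lie in $\O_R^n$ since $S(\g)_R$ is bounded (as $M(\g)$ is Archimedean, cf.\ Proposition~\ref{archmodulechar}), that the sets $x_i+(B_\ep(0))_R$ are pairwise disjoint and contained in $S(\g)_R$, that $f_c(x_i)=0$ for all $i$, and that $f_c\ge\ep\,u_{x_1}\dotsm u_{x_k}$ on $S(\g)_R$. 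These are precisely the hypotheses of Corollary~\ref{mainrep3} over $R$, so $f_c$ lies in the quadratic module generated by $\g$ in $\O_R[\x]$; writing such a representation $f_c=\sum_{i=0}^m\sum_jp_{ij}^2g_i$ with all $p_{ij}\in\O_R[\x]$ and setting $d_0:=\max_{i,j}\deg(p_{ij}^2g_i)$, we obtain $f_c\in M_{d_0}(\g)_R$, i.e.\ $c\in(S_{d_0})_R$.

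Finally, the finiteness theorem provides $d\in\N_0$ with $S_d=\R^D$, and this $d$ works: if $f\in\R[\x]_N$ has all coefficients in $[-N,N]$, if $\{x\in S(\g)\mid f(x)=0\}=\{x_1,\dots,x_k\}$, if the balls $B_\ep(x_i)$ are pairwise disjoint and contained in $S(\g)$, and if $f\ge\ep u$ on $S(\g)$ with $u=u_{x_1}\dotsm u_{x_k}$, then the coefficient vector of $f$ lies in $T\subseteq S_d=\R^D$, whence $f\in M_d(\g)$. All the substance sits in Corollary~\ref{mainrep3}; the one genuinely delicate point is the bookkeeping, namely making sure that ``$f_c\in M_d(\g)$'' and every hypothesis are semialgebraic with descriptions whose transfer to $R$ matches the hypotheses of Corollary~\ref{mainrep3} verbatim, and that a representation in the $\O_R[\x]$-quadratic module generated by $\g$ automatically has bounded degree, hence lands in some $M_{d_0}(\g)_R$.
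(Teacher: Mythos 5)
Your proof is correct and follows essentially the same strategy as the paper's: encode the hypotheses and the membership $f\in M_d(\g)$ by first-order formulas, show the resulting increasing family of semialgebraic sets $(S_d)_d$ exhausts $R^D$ over every real closed extension field $R$ via Corollary~\ref{mainrep3}, and conclude by the finiteness theorem. The only cosmetic difference is that you invoke Carath\'eodory to bound the number of squares in the description of $M_d(\g)$, while the paper simply hardwires a bound of $d$ terms into its formula and notes the Gram-matrix alternative in Remark~\ref{commentonbounds}(a); either device handles the bookkeeping point you flag at the end (that a representation over $\O_R[\x]$ with bounded degree lands in the transfer $M_{d_0}(\g)_R$).
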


\begin{proof}
Set $\nu:=\dim\R[\x]_N$. For each $d\in\N_0$, define the set $S_d\subseteq\R^\nu$
of all $a\in \R^\nu$ such that the following holds:
If $a\in[-N,N]^\nu$ and if
$a$ is the vector of coefficients (in a certain fixed order) of a polynomial $f\in\R[\x]_N$ with
exactly $k$ zeros $x_1,\dots,x_k$ on $S$, then
at least one of the following conditions (a), (b) and (c) is fulfilled:
\begin{enumerate}[(a)]
\item The sets $B_{\ep}(x_1),\ldots,B_\ep(x_n)$ are not pairwise disjoint or not all contained in $S$.
\item $f\ge\ep u$ on $S$ is violated where $u:=u_{x_1}\dotsm u_{x_k}\in\R[\x]$.
\item $f$ is not a sum of $d$ elements from $\R[\x]$ where each term in the sum is of degree at most $d$
and is of the form $p^2g_i$ with $p\in\R[\x]$ and $i\in\{0,\dots,m\}$ where $g_0:=1\in\R[\x]$.
\end{enumerate}
It is obvious that $S_0\subseteq S_1\subseteq S_2\subseteq\ldots$ and by real quantifier elimination
$S_d$ is a semialgebraic set for each $d\in\N_0$, see Subsection \ref{subs:tarski} above.
By Corollary \ref{mainrep3},
we have $\bigcup_{d\in\N_0}(S_d)_R=R^\nu$ for each real closed extension field $R$ of $\R$. By the finiteness theorem
(see Subsection \ref{subs:tarski} above),
it follows that $S_d=\R^\nu$ for some $d\in\N_0$ which is what we have to show.
\end{proof}

If we specialize the preceding theorem by setting $k=0$, we now obtain in the following corollary
Prestel's degree bound for Putinar's Positivstellensatz.
If we specialize also \emph{the proof of} the theorem by setting $k=0$, then it simplifies dramatically but
it is still considerably different from Prestel's proof.
What Prestel and we have in common is however the technique of working with non-Archimedean real closed fields
instead of the reals which enables us to work with infinitesimal neighborhoods and prevents us at many places
from having to quantify the size of certain neighborhoods of points. 
Nevertheless a quantitative version of the following corollary has also been proven with completely different means
by Nie and the second author \cite[Theorem 6]{ns}. To extend their quantitative approach in order to
try to prove a quantitative version of the above theorem would seem to be a very ambitious project that might be too tedious
even to begin with, at least in the general case.

\begin{cor}[Prestel]\label{putinardegreebound}
Let $n,m\in\N_0$ and $\g\in\R[\x]^m$ such that $M(\g)$ is Archi\-medean.
Moreover, let $N\in\N$ and $\ep\in\R_{>0}$.
Then there exists \[d\in\N_0\] such that for all $f\in\R[\x]_N$ with all coefficients in $[-N,N]$
and with $f\ge\ep$ on $S(\g)$, we have \[f\in M_d(\g).\]
\end{cor}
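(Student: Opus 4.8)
The plan is to derive Corollary \ref{putinardegreebound} as the special case $k=0$ of Theorem \ref{putinarzerosdegreebound}. When $k=0$, the tuple $(x_1,\dots,x_k)$ is empty, so the polynomial $u=u_{x_1}\dotsm u_{x_k}$ is the empty product, i.e.\ $u=1\in\R[\x]$. The hypothesis ``$\{x\in S(\g)\mid f(x)=0\}=\{x_1,\dots,x_k\}$'' becomes the assertion that $f$ has no zeros on $S(\g)$, which is precisely what $f\ge\ep$ on $S(\g)$ guarantees (since $\ep>0$). Likewise, the condition that the balls $B_\ep(x_1),\dots,B_\ep(x_k)$ be pairwise disjoint and contained in $S(\g)$ is vacuous for $k=0$, and $f\ge\ep u$ on $S(\g)$ reads $f\ge\ep$ on $S(\g)$. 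So the hypotheses of the corollary exactly match the $k=0$ instance of the theorem's hypotheses.

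First I would invoke Theorem \ref{putinarzerosdegreebound} with the given $n,m,\g$, with $k:=0$, and with the same $N\in\N$ and $\ep\in\R_{>0}$. This produces a $d\in\N_0$ such that for all $f\in\R[\x]_N$ with all coefficients in $[-N,N]$ the stated implication holds. Then I would take any $f\in\R[\x]_N$ with coefficients in $[-N,N]$ and $f\ge\ep$ on $S(\g)$, observe that then $f>0$ on $S(\g)$ so that $\{x\in S(\g)\mid f(x)=0\}=\emptyset=\{x_1,\dots,x_k\}$ with $k=0$, note that $u=1$ and that the disjointness/containment condition on the (empty family of) balls is trivially satisfied, and that $f\ge\ep=\ep u$ on $S(\g)$. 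The theorem then yields $f\in M_d(\g)$, which is the conclusion.

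The only point requiring a word of care is the bookkeeping about the empty product and empty family of balls, i.e.\ that all the auxiliary hypotheses of Theorem \ref{putinarzerosdegreebound} are indeed vacuously or trivially met when $k=0$; this is routine. There is no real obstacle here: the corollary is a clean specialization, and the substantive content (the transfer argument combining Corollary \ref{mainrep3} with the finiteness theorem) has already been carried out in the proof of the theorem. If one wished, one could also note that the proof of the theorem itself simplifies drastically in this case — the sets $S_d$ in that proof now only involve condition (c), since (a) and (b) disappear — but invoking the theorem as a black box is the shortest route.

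\begin{proof}
Apply Theorem \ref{putinarzerosdegreebound} with $k:=0$ and the given $n$, $m$, $\g$, $N$ and $\ep$. In this case the tuple $(x_1,\dots,x_k)$ is empty, so $u=u_{x_1}\dotsm u_{x_k}$ is the empty product $u=1\in\R[\x]$, and the family of balls $B_\ep(x_1),\dots,B_\ep(x_k)$ is empty, hence trivially pairwise disjoint and contained in $S(\g)$. We obtain some $d\in\N_0$ with the asserted property. Now let $f\in\R[\x]_N$ have all coefficients in $[-N,N]$ and satisfy $f\ge\ep$ on $S(\g)$. Since $\ep>0$, the polynomial $f$ has no zeros on $S(\g)$, so $\{x\in S(\g)\mid f(x)=0\}=\emptyset=\{x_1,\dots,x_k\}$ with $k=0$. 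Moreover $f\ge\ep=\ep u$ on $S(\g)$. By the conclusion of Theorem \ref{putinarzerosdegreebound} applied to this $f$, we get $f\in M_d(\g)$.
\end{proof}
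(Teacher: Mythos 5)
Your proof is correct and matches the paper's own derivation: the paper obtains this corollary precisely by specializing Theorem \ref{putinarzerosdegreebound} to $k=0$, noting that $u$ becomes the empty product $1$ and the ball conditions become vacuous. Your careful bookkeeping of these trivial points is exactly what the paper implicitly relies on.
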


\begin{pro}\label{squeeze}
Suppose $S\subseteq\R^n$ is compact, $x_1,\dots,x_k\in S^\circ$ are pairwise distinct,
$u:=u_{x_1}\dotsm u_{x_k}\in\R[\x]$ and $f\in\R[\x]$ with $f(x_1)=\ldots=f(x_k)=0$.
Then the following are equivalent:
\begin{enumerate}[\rm(a)]
\item $f>0$ on $S\setminus\{x_1,\ldots,x_k\}$ and $\hess f(x_1),\dots,\hess f(x_k)$ are positive definite.
\item There is some $\ep\in\R_{>0}$ such that $f\ge\ep u$ on $S$.
\end{enumerate}
\end{pro}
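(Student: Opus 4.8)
The plan is to prove the equivalence in Proposition~\ref{squeeze} by establishing both implications via elementary real analysis on the compact set $S$, exploiting the fact that the $x_i$ are interior points and that $u$ vanishes to second order precisely at the $x_i$.

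For the direction (b)$\implies$(a): Suppose $f\ge\ep u$ on $S$ for some $\ep\in\R_{>0}$. Since $u>0$ on $\R^n\setminus\{x_1,\dots,x_k\}$ (it is a product of the strictly positive-away-from-$x_i$ polynomials $u_{x_i}$), we immediately get $f>0$ on $S\setminus\{x_1,\dots,x_k\}$. For the Hessian claim, fix $i$. Since $x_i\in S^\circ$, there is a ball $B_\de(x_i)\subseteq S$, and on it $f-\ep u\ge0$ with $(f-\ep u)(x_i)=0$, so $x_i$ is a local minimizer of $f-\ep u$; hence $\hess(f-\ep u)(x_i)\succeq0$. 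Now compute $\hess u(x_i)$: because $u_{x_i}(x_i)=0$ and $\nabla u_{x_i}(x_i)=0$, the product rule gives $\hess u(x_i)=\bigl(\prod_{j\ne i}u_{x_j}(x_i)\bigr)\hess u_{x_i}(x_i)=2\bigl(\prod_{j\ne i}u_{x_j}(x_i)\bigr)I_n$, which is positive definite since all $u_{x_j}(x_i)>0$ for $j\ne i$ (the $x_j$ are distinct). Therefore $\hess f(x_i)\succeq\ep\hess u(x_i)\succ0$. This is essentially the computation already carried out inside the proof of Corollary~\ref{mainrep2}, specialized to $R=\R$.

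For the direction (a)$\implies$(b), which I expect to be the main obstacle: I would argue that the function $g:=f/u$, a priori defined only on $S\setminus\{x_1,\dots,x_k\}$, extends to a continuous (in fact positive) function on all of $S$, and then invoke compactness to get a positive lower bound $\ep$. Away from the $x_i$, positivity of $g$ is clear from (a). The issue is the behavior of $g$ near each $x_i$. Near $x_i$, by Taylor expansion $f(x)=\tfrac12(x-x_i)^T\hess f(x_i)(x-x_i)+o(\|x-x_i\|^2)$ and $u(x)=\bigl(\prod_{j\ne i}u_{x_j}(x_i)\bigr)\|x-x_i\|^2+o(\|x-x_i\|^2)$, using $f(x_i)=0=\nabla f(x_i)$ (the latter from the local-minimum-type reasoning, or directly: $f>0$ on $S\setminus\{x_i\}$ with $f(x_i)=0$ and $x_i\in S^\circ$ forces $x_i$ to be a local min, hence $\nabla f(x_i)=0$). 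Writing $x-x_i=r v$ with $\|v\|=1$, the quotient $f(x)/u(x)$ tends to $\tfrac{v^T\hess f(x_i)v}{2\prod_{j\ne i}u_{x_j}(x_i)}$ as $r\to0$, uniformly in $v$; since $\hess f(x_i)\succ0$ this limit is bounded below by a positive constant $c_i$ over the sphere. Hence $\liminf_{x\to x_i}g(x)>0$, so $g$ extends continuously to $x_i$ with a positive value. Doing this for each $i$ yields a continuous positive function on the compact set $S$, which therefore attains a positive minimum $\ep$; then $f\ge\ep u$ on $S$.

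The delicate point is making the "uniformly in $v$" Taylor estimate rigorous and handling the interplay between the two second-order expansions so that the quotient genuinely converges rather than merely being bounded; one cleanly way is to note that both $f$ and $c_iu$ (for a suitable $0<c_i$ slightly below $\min_{\|v\|=1}\tfrac12 v^T\hess f(x_i)v/\prod_{j\ne i}u_{x_j}(x_i)$) have the same value and gradient at $x_i$ while $\hess(f-c_iu)(x_i)\succ0$, so $x_i$ is a strict local minimizer of $f-c_iu$ and thus $f\ge c_iu$ on a neighborhood of $x_i$; patching finitely many such neighborhoods with the away-from-$x_i$ bound from compactness gives the global $\ep$. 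This local-strict-convexity trick avoids any explicit $o(\cdot)$ bookkeeping and is the technically cleanest route.
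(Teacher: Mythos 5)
Your proof is correct, but it takes a genuinely different route from the paper in the nontrivial direction (a)$\implies$(b). The paper reduces WLOG to the case where $S$ is a disjoint union of small closed balls around the $x_i$, encodes this reduced $S$ as the nonnegativity set of an Archimedean quadratic module $M$, and then invokes a \emph{strengthened} version of Theorem~\ref{mainrep} (obtained by using the implication (a)$\implies$(c) of Theorem~\ref{conemembershipunitextreme} instead of Corollary~\ref{conemembershipextr}) to conclude $f-\ep u\in M$, hence $f-\ep u\ge0$ on $S$. That is, the paper's route goes through the pure-state/unit machinery it has already built. You instead give a direct multivariate-analysis argument: choose $c_i>0$ small enough that $\hess(f-c_iu)(x_i)\succ0$ while $(f-c_iu)(x_i)=0$ and $\nabla(f-c_iu)(x_i)=0$, so that $f\ge c_iu$ on a ball around each $x_i$; on the compact remainder of $S$ the continuous function $f/u$ is positive and hence bounded below; take $\ep$ to be the minimum of these finitely many bounds. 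This is exactly the ``direct proof using only basic multivariate analysis'' that the paper explicitly leaves as an exercise to the reader, so your argument is the intended alternative. Your second-order/local-strict-minimum trick is the clean way to avoid explicit $o(\cdot)$ bookkeeping, and your (b)$\implies$(a) direction matches the paper's (which points to the computation in the proof of Corollary~\ref{mainrep2}). Both approaches are sound; the paper's is more uniform with its abstract framework, while yours is self-contained and elementary.
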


\begin{proof}
It is easy to show that (b) implies (a), cf. the proof of Corollary \ref{mainrep2}.

Conversely, suppose that (a) holds. We show (b). It is easy to show that one can WLOG
assume that \[S=\bigdotcup_{i=1}^k\overline{B_\ep(x_i)}\] for some $\ep>0$.
Then one finds easily an Archimedean quadratic module $M$ of $\R[\x]$ such that
\[S=\{x\in\R^n\mid\forall p\in M:p(x)\ge0\}.\]
A strengthened version of Theorem \ref{mainrep} now yields $f-\ep u\in M$ for some $\ep\in\R_{>0}$
and thus $f-\ep u\ge0$ on $S$. One gets this strengthened version of Theorem \ref{mainrep}
by applying (a)$\implies$(c) from Theorem \ref{conemembershipunitextreme} instead of
Corollary \ref{conemembershipextr} in its proof. Alternatively, we leave it as an exercise to the reader to
give a direct proof using only basic multivariate analysis.
\end{proof}

As a consequence, we obtain Scheiderer's well-known generalization of Putinar's Positivstellensatz to nonnegative polynomials
with finitely many zeros \cite[Corollary 3.6]{s1} that was mentioned in Subsections \ref{subs:our} and \ref{subs:pospol} above.

\begin{cor}\label{putinarzeros}
Let $\g\in\R[\x]^m$ such that $M(\g)$ is Archimedean.
Moreover, suppose $k\in\N_0$ and $x_1,\ldots,x_k\in S(\g)^\circ$ are pairwise distinct.
Let $f\in\R[\x]$ such that $f(x_1)=\ldots=f(x_k)=0$, $f>0$ on $S\setminus\{x_1,\ldots,x_k\}$ and
$\hess f(x_1),\dots,\hess f(x_k)$ are positive definite. Then $f\in M(\g)$.
\end{cor}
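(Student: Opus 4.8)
The plan is to deduce Corollary~\ref{putinarzeros} from Proposition~\ref{squeeze} together with Corollary~\ref{mainrep3} applied over the ground field $R=\R$ itself, so that no genuinely new argument is required; the corollary is essentially a repackaging of the results already established, whose substance lies in Theorems~\ref{dicho} and~\ref{mainrep}.

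First I would record that $S(\g)$ is compact: since $M(\g)$ is Archimedean, Proposition~\ref{archmodulechar} provides $N\in\N$ with $N-(X_1^2+\ldots+X_n^2)\in M(\g)$, so $S(\g)$ is contained in a closed ball and is closed, hence compact. Next I would apply Proposition~\ref{squeeze} with $S:=S(\g)$: the standing hypotheses of the corollary ($x_1,\dots,x_k\in S(\g)^\circ$ pairwise distinct, $f(x_1)=\ldots=f(x_k)=0$, $f>0$ on $S(\g)\setminus\{x_1,\dots,x_k\}$ and $\hess f(x_1),\dots,\hess f(x_k)\succ0$) are precisely condition~(a) of that proposition, so condition~(b) holds, i.e.\ there is some $\ep_0\in\R_{>0}$ with $f\ge\ep_0 u$ on $S(\g)$, where $u:=u_{x_1}\dotsm u_{x_k}$.

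Then I would shrink $\ep_0$ to some $\ep\in(0,\ep_0]$ small enough that the balls $B_\ep(x_1),\dots,B_\ep(x_k)$ are pairwise disjoint and each contained in $S(\g)$; this is possible because the $x_i$ are pairwise distinct and lie in $S(\g)^\circ$. Since $u=u_{x_1}\dotsm u_{x_k}\ge0$ on $\R^n$, we still have $f\ge\ep_0 u\ge\ep u$ on $S(\g)$, and $f(x_1)=\ldots=f(x_k)=0$. Now I would invoke Corollary~\ref{mainrep3} with $R=\R$: here $\O_\R=\R$, the transfer of a semialgebraic set over $\R$ to $\R$ is itself (so $(B_\ep(0))_\R=B_\ep(0)$ and $S(\g)_\R=S(\g)$), and the quadratic module generated by $\g$ in $\O_\R[\x]=\R[\x]$ is exactly $M(\g)$. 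Hence Corollary~\ref{mainrep3} yields $f\in M(\g)$.

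I do not expect a real obstacle; the only point worth a word is that the \emph{same} $\ep$ must simultaneously make the balls disjoint and validate $f\ge\ep u$, which is handled by monotonicity in $\ep$ using $u\ge0$. Alternatively one could bypass Corollary~\ref{mainrep3} and apply Corollary~\ref{mainrep2} directly with $M:=M(\g)$ over $\O=\R$, after noting that $\{x\in\R^n\mid\forall p\in M(\g):p(x)\ge0\}=S(\g)$ and that $f\in\bigcap_{i=1}^kI_{x_i}^2$ because each $x_i$ is an interior local minimizer of $f$ with value $0$, so $\nabla f(x_i)=0$ and Lemma~\ref{membershipix} applies.
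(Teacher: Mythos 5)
Your proof is correct, but it takes a genuinely different route from the paper's. The paper derives Corollary~\ref{putinarzeros} from Theorem~\ref{putinarzerosdegreebound} (the quantitative degree-bounded version, itself proved via Corollary~\ref{mainrep3} over arbitrary real closed extension fields plus the finiteness theorem) together with Proposition~\ref{squeeze}: one first gets $\ep$ with $f\ge\ep u$ on $S(\g)$, then invokes the degree-bounded theorem with suitably large $N$ and notes $M_d(\g)\subseteq M(\g)$. You instead combine Proposition~\ref{squeeze} with a direct application of Corollary~\ref{mainrep3} (or, in your variant, Corollary~\ref{mainrep2}) over the ground field $R=\R$, in which case $\O_\R=\R$, $\st=\id$, and the transfer is trivial, so the quadratic module generated by $\g$ in $\O_\R[\x]$ is literally $M(\g)$. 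This is more economical: it avoids the semialgebraic formula machinery and the finiteness theorem entirely, which are needed only to manufacture the uniform degree bound. What the paper's route buys in exchange is a cleaner logical picture --- it exhibits Corollary~\ref{putinarzeros} explicitly as the qualitative shadow of the quantitative Theorem~\ref{putinarzerosdegreebound}, which is the thrust of Remark following it. Both arguments are sound; your observation about choosing a single $\ep$ (shrinking $\ep_0$ using $u\ge0$) is exactly the right small point to flag, and your alternative via Corollary~\ref{mainrep2} correctly supplies $f\in\bigcap_i I_{x_i}^2$ from the interior-local-minimizer condition and Lemma~\ref{membershipix}.
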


\begin{proof}
This follows from Theorem \ref{putinarzerosdegreebound} by Proposition \ref{squeeze}.
\end{proof}

\begin{rem}
Because of Proposition \ref{squeeze}, Theorem \ref{putinarzerosdegreebound} is really a
quantitative version of Corollary \ref{putinarzeros}.
\end{rem}

\begin{rem}\label{commentonbounds}
\begin{enumerate}[(a)]
\item In Condition (c) from the proof of Theorem \ref{putinarzerosdegreebound}, we speak of
``a sum of $d$ elements'' instead of ``a sum of elements'' (which would in general
be strictly weaker).
Our motivation to do this was that this is the easiest way to make sure that we can formulate
(c) in a formula in the sense of Subsection \ref{subs:tarski}. A second motivation could have been to formulate
Theorem \ref{putinarzerosdegreebound} in stronger way, namely by letting $d$ be a bound
not only on the degree of the quadratic module representation but also on the number of terms in it.
This second motivation is however not interesting because we get also from the Gram matrix method
\cite[Theorem~8.1.3]{pd1} a bound on this number of terms (a priori bigger than $d$ but after readjusting $d$ we can
again assume it to be $d$). We could have used the Gram matrix method already to see that
``a sum of elements'' (instead of ``a sum of $d$ elements'') can also be expressed in a formula.
\item We could strengthen condition (c) from the proof of
Theorem \ref{putinarzerosdegreebound}, by writing ``with $p\in R[\x]$ all of whose coefficients lie
in $[-d,d]_R$'' instead of just ``with $p\in R[\x]$''. Then $\bigcup_{d\in\N_0}(S_d)_R=R^\nu$
would still hold for all real closed extension fields $R$ of $\R$ since Corollary \ref{mainrep3} states that
$f$ lies in the quadratic module generated by $\g$ even in $\O[\x]$ not just in $\R[\x]$.
This would lead to a real strengthening of Theorem \ref{putinarzerosdegreebound}, namely we
could ensure that $d$ is a bound
not only on the degree of the quadratic module representation but also on the size of the coefficients
in it. However, we do currently not know of any application of this and therefore renounced to carry this
out.
\end{enumerate}
\end{rem}

\subsection{Consequences for the Lasserre relaxation of a polynomial optimization problem}\label{subs:cons-pop}

We now harvest our first crop and present our
application to polynomial optimization that we have announced and discussed already in Subsection
\ref{subs:our}.

\begin{thm}\label{optimizationbound}
Let $n,m\in\N_0$ and $\g\in\R[\x]^m$ such that $M(\g)$ is Archimedean and $S(\g)\ne\emptyset$.
Moreover, let $k\in\N$, $N\in\N$ and $\ep\in\R_{>0}$.
Then there exists \[d\in\N_0\] such that for all $f\in\R[\x]_N$ with all coefficients in $[-N,N]$,
\[a:=\min\{f(x)\mid x\in S(\g)\}\quad\text{and}\quad\{x\in S(\g)\mid f(x)=a\}=\{x_1,\ldots,x_k\},\]
we have: If the balls $B_\ep(x_1),\ldots,B_\ep(x_n)$ are pairwise disjoint and contained in $S(\g)$ and if we have
\[f\ge a+\ep u\text{ on }S(\g)\]
where $u:=u_{x_1}\dotsm u_{x_k}\in\R[\x]$, then $f-a\in M(\g)$ and consequently
\[\lasserre_d(f,\g)=a.\]
\end{thm}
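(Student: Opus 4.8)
The plan is to derive Theorem \ref{optimizationbound} essentially as a specialization of Theorem \ref{putinarzerosdegreebound}, after replacing $f$ by $f-a$ and handling the fact that $a$ is an extra real parameter. First I would apply Theorem \ref{putinarzerosdegreebound} not to $f$ but to the shifted polynomial $f-a$. The subtle point is that $a=\min\{f(x)\mid x\in S(\g)\}$ depends on $f$, so $f-a$ need not have its coefficients in a fixed box $[-N,N]$; however, since $S(\g)$ is compact and $f$ has degree at most $N$ with coefficients in $[-N,N]$, the value $a$ is bounded in absolute value by some constant $N'$ depending only on $N$, $\g$ and $n$. Hence $f-a\in\R[\x]_N$ has all coefficients in $[-N',N']$, and replacing $N$ by $N':=\max\{N,N'\}$ we are exactly in the situation of Theorem \ref{putinarzerosdegreebound} with $k$, $N'$ and $\ep$: the zero set of $f-a$ on $S(\g)$ is $\{x\in S(\g)\mid f(x)=a\}=\{x_1,\dots,x_k\}$, the balls $B_\ep(x_i)$ are pairwise disjoint and contained in $S(\g)$, and $f-a\ge\ep u$ on $S(\g)$ where $u=u_{x_1}\dotsm u_{x_k}$. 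Theorem \ref{putinarzerosdegreebound} then produces a single $d\in\N_0$ (independent of $f$) with $f-a\in M_d(\g)$, hence a fortiori $f-a\in M(\g)$.

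Next I would translate $f-a\in M_d(\g)$ into the statement $\lasserre_d(f,\g)=a$ about the Lasserre relaxation. By the definition of $\lasserre_d(f,\g)$ in Subsection \ref{subs:formal}, for every linear $L\colon\R[\x]_d\to\R$ with $L(M_d(\g))\subseteq\R_{\ge0}$ and $L(1)=1$, we have $L(f-a)=L(f)-aL(1)=L(f)-a$, and since $f-a\in M_d(\g)$ this gives $L(f)-a\ge0$, i.e. $L(f)\ge a$; taking the infimum over all such $L$ yields $\lasserre_d(f,\g)\ge a$. For the reverse inequality, one uses that $\lasserre_d(f,\g)$ is always a lower bound for $\inf\{f(x)\mid x\in S(\g)\}=a$ (the evaluation functional at any point of the nonempty set $S(\g)$ is an admissible $L$), so $\lasserre_d(f,\g)\le a$. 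Combining the two inequalities gives $\lasserre_d(f,\g)=a$.

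One must be slightly careful that $d$ may a priori be smaller than $N$, in which case $f\notin\R[\x]_d$ and the evaluation of $L$ on $f$ does not literally make sense; this is harmless since we may always enlarge $d$ (the relaxations $M_d(\g)$ are nested, so $f-a\in M_d(\g)$ persists under increasing $d$), so without loss of generality $d\ge N$ and all the functionals $L$ above are defined on $f$. The only real obstacle is the bookkeeping around the parameter $a$: one has to be sure that the box bound $N'$ for the coefficients of $f-a$ really is uniform over all admissible $f$, which follows from compactness of $S(\g)$ together with the fact that there are only finitely many monomials of degree $\le N$ and each has a bounded coefficient, so $|a|\le\sup_{x\in S(\g)}|f(x)|$ is bounded by a constant depending only on $n$, $N$ and $\g$. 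Everything else is a direct invocation of Theorem \ref{putinarzerosdegreebound} and the definitions.
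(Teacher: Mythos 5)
Your proposal is correct and follows essentially the same route as the paper: shift to $\widehat f:=f-a$, bound $|a|$ uniformly using compactness of $S(\g)$ and the coefficient/degree bound on $f$, enlarge the coefficient box to some $\widehat N$, invoke Theorem~\ref{putinarzerosdegreebound}, and then read off $\lasserre_d(f,\g)=a$ from $f-a\in M_d(\g)$ together with the trivial lower-bound property of the relaxation. The paper compresses the last step to ``the rest is now easy,'' whereas you spell it out (including the harmless observation that one may take $d\ge N$); aside from a small notational overload on $N'$, there is no substantive difference.
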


\begin{proof}
Set $c:=N\left(\dim\R[\x]_N\right)\max_{x\in S(\g)}\|x\|\in\R_{\ge0}$.
We apply Theorem \ref{putinarzerosdegreebound} to the given data with $N$ replaced by some $\hat N\in\N$
satisfying $\hat N\ge N+c$,
and choose the corresponding $d\in\N_0$. Now let $f\in\R[\x]_N$ with all coefficients in $[-N,N]$,
\[a:=\min\{f(x)\mid x\in S(\g)\}\quad\text{and}\quad\{x\in S(\g)\mid f(x)=a\}=\{x_1,\ldots,x_k\}.\]
By the choice of $\widehat N$, one shows easily that all values of $f$ on $S(\g)$ lie in the interval $[-c,c]$. In particular,
$a\in[-c,c]$. Therefore all coefficients of \[\widehat f:=f-a\in\R[\x]_N\subseteq\R[\x]_{\widehat N}\]
lie in $[-\widehat N,\widehat N]$. By choice of $a$, we have of course
\[\{x\in S(\g)\mid f(x)=a\}=\{x_1,\ldots,x_k\}.\]
The rest is now easy using Theorem \ref{putinarzerosdegreebound}.
\end{proof}

\section{Linear polynomials and truncated quadratic modules}

In this section, we study the implications of Subsection \ref{subs:nnrcf} for membership
of \emph{linear} polynomials
in truncated quadratic modules. It will not be sufficient to build upon the results of the preceding section where we studied
membership
of \emph{arbitrary degree} polynomials
in truncated quadratic modules. Instead we will rather have to resort again
to the results about membership in quadratic modules over real closed fields that we have obtained in
Subsection \ref{subs:nnrcf}. Our ultimate goal are the applications to, in a broad sense,
solving systems of polynomial inequalities that we will harvest in Subsection \ref{subs:cons-solving}.

\subsection{Concavity}

\begin{df}\label{dfconcave}
Let $g\in\R[\x]$. If $x\in\R^n$, then we call $g$
\emph{strictly concave} at $x$ if $(\hess g)(x)\prec0$
and \emph{strictly quasiconcave} at $x$ if \[((\nabla g)(x))^Tv=0\implies
v^T(\hess g)(x)v<0\]
for all $v\in\R^n\setminus\{0\}$. If $S\subseteq\R^n$,
we call $g$ \emph{strictly (quasi-)concave} on $S$ is $g$ is
\emph{strictly (quasi-)concave} at every point of $S$.
\end{df}

\begin{rem}\label{whatmeansquasiconcavity}
It is trivial that quasiconcavity of a polynomial $g$ at $x$ depends only on the function $U\to\R,\ x\mapsto g(x)$
where $U$ is an arbitrarily small neighborhood of $x$. But if $g(x)=0$ and
$\nabla g(x)\ne0$, then it actually
depends only on the function
\[U\to\{-1,0,1\},\ x\mapsto\sgn(g(x)).\]
It then roughly means that the hypersurface $S(g)=\{x\in\R^n\mid g(x)\ge0\}$ looks locally around $x$ almost like a ``boundary piece''
of a closed disk \cite[Proposition~3.4]{ks}.
In the exceptional case where
$g(x)=0$ and $\nabla g(x)=0$, $g$ is strictly quasiconcave at $x$ if and only if it is strictly concave at $x$, and in this case
$S(g)$ equals locally the singleton set $\{x\}$.
\end{rem}

The following remark essentially appears in \cite[Lemma 11]{hn1}.

\begin{rem}\label{quasiconcavelift}
For $g\in\R[\x]$ and $x\in\R^n$, the following are obviously equivalent:
\begin{enumerate}[(a)]
\item $g$ is strictly quasiconcave at $x$.
\item $\exists\la\in\R:\la(\nabla g(x))(\nabla g(x))^T\succ(\hess g)(x)$
\end{enumerate}
\end{rem}

\begin{lem}\label{strictly2neighbor}
Let $g\in\R[\x]$. If $g$ is strictly (quasi-)concave at $x\in\R^n$,
then there is a neighborhood $U$ of $x$ such that $g$ is
strictly (quasi-)concave on $U$.
\end{lem}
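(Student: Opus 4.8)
The plan is to reduce both cases to the fact, recorded in the subsection on positive semidefinite matrices, that the set of positive definite (equivalently negative definite) symmetric matrices is open in $S\R^{n\times n}$, together with the observation that $y\mapsto(\hess g)(y)$ and $y\mapsto(\nabla g(y))(\nabla g(y))^T$ are continuous maps $\R^n\to S\R^{n\times n}$, their entries being polynomials in $y$.

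The strictly concave case is then immediate: by hypothesis $(\hess g)(x)\prec0$, so $x$ lies in the preimage under the continuous map $y\mapsto(\hess g)(y)$ of the open set of negative definite matrices, and this preimage is an open neighborhood $U$ of $x$ on which, by Definition \ref{dfconcave}, $g$ is strictly concave.

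The strictly quasiconcave case needs one extra idea, and this is the only genuine subtlety: one should \emph{not} try to prove openness of the condition in Definition \ref{dfconcave} directly, because the linear subspace $\{v\in\R^n\mid(\nabla g(y))^Tv=0\}$ over which negativity is tested varies with $y$ and in particular jumps in dimension where $\nabla g$ vanishes. Instead I would first pass to the lifted, unconstrained reformulation of Remark \ref{quasiconcavelift}: $g$ is strictly quasiconcave at $x$ if and only if there is some $\la\in\R$ with $\la(\nabla g(x))(\nabla g(x))^T\succ(\hess g)(x)$. Fix such a $\la$. Then $y\mapsto\la(\nabla g(y))(\nabla g(y))^T-(\hess g)(y)$ is a continuous map into $S\R^{n\times n}$ whose value at $y=x$ is positive definite, hence it takes positive definite values on an open neighborhood $U$ of $x$. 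Applying the implication (b)$\implies$(a) of Remark \ref{quasiconcavelift} at each point of $U$ \emph{with this same $\la$} shows that $g$ is strictly quasiconcave on $U$, as desired. Apart from isolating this use of the fixed multiplier, the proof is just continuity of polynomial maps and openness of the definiteness cones, so there is no real obstacle beyond that point.
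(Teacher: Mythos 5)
Your proof is correct and is exactly the argument the paper sketches: both cases reduce to openness of the definiteness cone and continuity of the relevant polynomial matrix map, with the quasiconcave case handled by fixing a single multiplier $\la$ from Remark \ref{quasiconcavelift}. You have simply spelled out the details the paper leaves to the reader.
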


\begin{proof}
The first statement follows from the openness of $\R^{n\times n}_{\succ0}$
by the continuity of $\R^n\to S\R^{n\times n},\ x\mapsto(\hess g)(x)$. The second statement follows
similarly by using the equivalence of (a) and (b) in Remark \ref{quasiconcavelift}.
\end{proof}

\begin{rem}\label{derexo}
Let $g\in\R[\x]$ and $x\in\R^n$ with $g(x)=0$. Then
\begin{align*}
(\nabla(g(1-g)^k))(x)&=(\nabla g)(x)\qquad\text{and}\\
(\hess(g(1-g)^k))(x)&=(\hess g-2k(\nabla g)(\nabla g)^T)(x).
\end{align*}
\end{rem}

\begin{lem}\label{quasi2concave}
Suppose $g\in\R[\x]$, $u\in\R^n$ and $g(u)=0$.
Then the following are equivalent:
\begin{enumerate}[(a)]
\item $g$ is strictly quasiconcave at $u$.
\item There exists $k\in\N$ such that $g(1-g)^k$ is strictly concave at $u$.
\item There exists $k\in\N$ such that for all $\ell\in\N$ with $\ell\ge k$, we have
that $g(1-g)^\ell$ is strictly concave at $u$.
\end{enumerate}
\end{lem}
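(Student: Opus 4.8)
The plan is to read off everything from Remark~\ref{derexo} and the equivalence (a)$\Leftrightarrow$(b) of Remark~\ref{quasiconcavelift}, using the single extra observation that a rank-one matrix $ww^T$ is positive semidefinite, so that the defining inequality $\la(\nabla g)(u)(\nabla g)(u)^T\succ(\hess g)(u)$ of quasiconcavity only becomes easier to satisfy when $\la$ is increased. The implication (c)$\Rightarrow$(b) is trivial (take $\ell=k$), so only (b)$\Rightarrow$(a) and (a)$\Rightarrow$(c) require work, and both are short.

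First I would do (b)$\Rightarrow$(a). Assuming $g(1-g)^k$ is strictly concave at $u$, i.e.\ $(\hess(g(1-g)^k))(u)\prec0$, I invoke Remark~\ref{derexo} (legitimate because $g(u)=0$) to rewrite this Hessian as $(\hess g)(u)-2k\,(\nabla g)(u)(\nabla g)(u)^T$, so that $2k\,(\nabla g)(u)(\nabla g)(u)^T\succ(\hess g)(u)$; then Remark~\ref{quasiconcavelift} with $\la:=2k$ gives strict quasiconcavity of $g$ at $u$.

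Next, for (a)$\Rightarrow$(c), I use Remark~\ref{quasiconcavelift} to pick $\la\in\R$ with $\la\,(\nabla g)(u)(\nabla g)(u)^T\succ(\hess g)(u)$, and I choose $k\in\N$ with $2k\ge\la$. For any $\ell\in\N$ with $\ell\ge k$ we then have $2\ell\ge\la$, hence $(2\ell-\la)\,(\nabla g)(u)(\nabla g)(u)^T\succeq0$ since $(\nabla g)(u)(\nabla g)(u)^T\succeq0$; adding this to the chosen strict inequality yields $2\ell\,(\nabla g)(u)(\nabla g)(u)^T\succ(\hess g)(u)$, i.e.\ $(\hess g)(u)-2\ell\,(\nabla g)(u)(\nabla g)(u)^T\prec0$, which by Remark~\ref{derexo} is exactly $(\hess(g(1-g)^\ell))(u)\prec0$. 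So $g(1-g)^\ell$ is strictly concave at $u$ for every $\ell\ge k$, proving (c).

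Honestly there is no real obstacle here; the proof is a bookkeeping exercise. The one point that deserves a sentence of justification is that the multiplier $\la$ in Remark~\ref{quasiconcavelift}(b) may be enlarged freely, which is precisely where positive semidefiniteness of the rank-one matrix $(\nabla g)(u)(\nabla g)(u)^T$ is used. I would also remark that the degenerate case $(\nabla g)(u)=0$ is handled uniformly by the above: there strict quasiconcavity at $u$ just means $(\hess g)(u)\prec0$, and since $(\hess(g(1-g)^\ell))(u)=(\hess g)(u)$ in that case, all three conditions reduce to the same statement.
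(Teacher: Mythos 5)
Your proof is correct and follows exactly the paper's approach, which is the one-line "Combine Remarks~\ref{derexo} and \ref{quasiconcavelift}"; you have simply spelled out the bookkeeping, including the (harmless but worth stating) observation that the multiplier $\lambda$ in Remark~\ref{quasiconcavelift}(b) can be enlarged because $(\nabla g)(u)(\nabla g)(u)^T\succeq0$.
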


\begin{proof}
Combine Remarks \ref{derexo} and \ref{quasiconcavelift}.
\end{proof}

\subsection{Real Lagrange multipliers}

\begin{rem}\label{gradientpositive}
If $u,x\in\R^n$ with $v:=x-u\ne0$, $g\in\R[\x]$, $g$ is quasiconcave at $u$, $g(u)=0$ and $g\ge0$ on $\conv\{u,x\}$, then
obviously $(\nabla g(u))^Tv>0$.
\end{rem}

\begin{lem}[Existence of Lagrange multipliers]\label{lagrange}
Let $u\in\R^n$, $f\in\R[\x]$, $\g\in\R[\x]^m$,
let $U$ be a neighborhood of $u$ in $\R^n$ such that $U\cap S(\g)$
is convex and not a singleton. Moreover, suppose $g_1,\ldots,g_m$ are quasiconcave at $u$, $f\ge0$ on $U\cap S(\g)$ and
\[f(u)=g_1(u)=\ldots=g_m(u)=0.\]
Then there are $\la_1,\ldots,\la_m\in\R_{\ge0}$ such that
\[\nabla f=\sum_{i=1}^m\la_i\nabla g_i(u).\]
\end{lem}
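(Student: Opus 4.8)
The plan is to recognize this as a Karush–Kuhn–Tucker statement and to prove it by a Farkas‑lemma (linear programming duality) argument. Let $A$ be the $m\times n$ matrix whose $i$‑th row is $(\nabla g_i(u))^T$ and set $c:=(\nabla f)(u)$; the assertion is that $c=A^T\la$ for some $\la\in\R^m_{\ge0}$, and by Farkas' lemma this is equivalent to the nonexistence of a vector $v\in\R^n$ with $Av\ge0$ and $c^Tv<0$. So I would fix such a hypothetical $v$ and aim for a contradiction with the hypothesis $f\ge0=f(u)$ on $U\cap S(\g)$.

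The key preliminary step exploits that $U\cap S(\g)$ is convex and not a singleton: since $u\in U\cap S(\g)$ (as $g_i(u)=0$ for all $i$, so $u\in S(\g)$, and $u\in U$), I can pick $x_0\in U\cap S(\g)$ with $w:=x_0-u\ne0$; then $\conv\{u,x_0\}\subseteq U\cap S(\g)\subseteq S(\g)$, so by quasiconcavity of the $g_i$ at $u$ together with $g_i(u)=0$, Remark~\ref{gradientpositive} yields $(\nabla g_i(u))^Tw>0$ for every $i$. Now I would pass to the perturbed direction $\tilde v:=v+\ep w$ for a small $\ep\in\R_{>0}$: from $Av\ge0$ and $(\nabla g_i(u))^Tw>0$ one gets $(\nabla g_i(u))^T\tilde v>0$ for all $i$, and since $c^Tv<0$ strictly, also $c^T\tilde v<0$ once $\ep$ is small enough. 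Running along the ray $t\mapsto u+t\tilde v$ for small $t>0$, a first‑order Taylor expansion gives $g_i(u+t\tilde v)=t\bigl((\nabla g_i(u))^T\tilde v+O(t)\bigr)>0$ for all $i$ and all small $t>0$, hence $u+t\tilde v\in S(\g)$; and $u+t\tilde v\in U$ for small $t$ because $U$ is a neighborhood of $u$. Thus $u+t\tilde v\in U\cap S(\g)$, while $f(u+t\tilde v)=t\bigl(c^T\tilde v+O(t)\bigr)<0$ for small $t>0$ --- a contradiction. So no such $v$ exists, and Farkas' lemma supplies the required $\la_i\ge0$.

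The one genuinely delicate point, and the reason the ``convex and not a singleton'' hypothesis is needed, is the feasibility of the ray: a direction $v$ that is merely \emph{tangent} to some constraint (i.e., $(\nabla g_i(u))^Tv=0$) could leave $S(\g)$ to second order, so the raw $v$ from Farkas' lemma is unusable. The perturbation by $w$ is exactly what turns every relevant gradient inequality strict, making the first‑order expansion conclusive and sidestepping any need for second‑order (quasiconcavity) information about the $g_i$ beyond what Remark~\ref{gradientpositive} already encodes. The boundary case $m=0$ needs no separate treatment: then $S(\g)=\R^n$, the claim reduces to $(\nabla f)(u)=0$, and the argument above collapses to the classical first‑order necessary condition at a local minimum.
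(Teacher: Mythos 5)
Your proposal is correct and follows essentially the same route as the paper: Farkas' lemma to obtain a hypothetical separating direction, then perturbation of that direction by a strictly feasible direction (obtained from a second point of $U\cap S(\g)$ via Remark~\ref{gradientpositive}) to make all constraint‑gradient inequalities strict, and finally a first‑order Taylor expansion to contradict $f\ge0$ on $U\cap S(\g)$. The only differences are cosmetic (your $v$ and $w$ swap roles relative to the paper, and you spell out the trivial $m=0$ case).
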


\begin{proof}
Choose $x\in U\cap S(\g)$ with $v:=x-u\ne0$: By Remark \ref{gradientpositive}, we have $(\nabla g_i(u))^Tv>0$ for $i\in\{1,\ldots,m\}$ since
$S$ is convex. Assume the required Lagrange multipliers do not exist. Then Farkas' lemma \cite[Corollary 22.3.1]{roc} yields $w\in\R^n$
such that $(\nabla f)^Tw<0$ and $((\nabla g_i)(u))^Tw\ge0$ for all $i\in\{1,\ldots,m\}$. Replacing $w$ by $w+\ep v$ for some
small $\ep>0$, we get even $((\nabla g_i)(u))^Tw>0$ for all $i\in\{1,\ldots,m\}$. Then for all sufficiently small $\de\in\R_{\ge0}$,
we have $u+\de v\in U\cap S(\g)$ but $f(u+\de v)<0$ $\lightning$.
\end{proof}

\subsection{The convex boundary}
We present the useful concept of the \emph{convex boundary} that has been introduced by Helton and Nie in \cite[Pages 762 and 775]{hn2}.

\begin{df}\label{nearconvexboundary}
Let $S\subseteq\R^n$.
We call \[\convbd S:=S\cap\partial\conv S\] the \emph{convex boundary} of $S$.
We say that $S$ \emph{has nonempty interior near its convex boundary}
if \[\convbd S\subseteq\overline{S^\circ}.\]
\end{df}

\begin{pro}\label{convbdchar}
Let $S\subseteq\R^n$. Then
\[\convbd S=\{u\in S\mid\exists\text{ linear form $f$ in }\R[\x]_1\setminus\{0\}:\forall x\in S:f(u)\le f(x)\}.\]
\end{pro}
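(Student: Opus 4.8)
The plan is to prove the two inclusions separately, exploiting the identity $\partial\conv S=\overline{\conv S}\setminus(\conv S)^\circ$ together with elementary separation arguments. I note first that for a convex set the interior and hence the boundary are unchanged under taking closure, so no closedness or compactness of $S$ will be needed.

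For the inclusion ``$\supseteq$'', I would suppose $u\in S$ and that $f=a_1X_1+\dots+a_nX_n$ is a linear form in $\R[\x]_1\setminus\{0\}$ with $f(u)\le f(x)$ for all $x\in S$. Since $f$ is additive and homogeneous, this inequality propagates to all of $\conv S$: for a convex combination $\sum_i\la_ix_i$ of points $x_i\in S$ one has $f\left(\sum_i\la_ix_i\right)=\sum_i\la_if(x_i)\ge\left(\sum_i\la_i\right)f(u)=f(u)$. Now $u\in S\subseteq\conv S\subseteq\overline{\conv S}$, so it remains to rule out $u\in(\conv S)^\circ$. If $B_\ep(u)\subseteq\conv S$ for some $\ep>0$, then the point $u-\frac{\ep}{2\|a\|}a$ lies in $B_\ep(u)$ but has $f$-value $f(u)-\frac{\ep}{2}\|a\|<f(u)$, contradicting the previous step. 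Hence $u\in\overline{\conv S}\setminus(\conv S)^\circ=\partial\conv S$, and therefore $u\in S\cap\partial\conv S=\convbd S$.

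For the inclusion ``$\subseteq$'', let $u\in\convbd S$, so $u\in S$ and $u\notin(\conv S)^\circ$. I would split according to whether $\conv S$ is full-dimensional. If $(\conv S)^\circ\ne\emptyset$, then $u$ is a boundary point of the full-dimensional convex set $\conv S$, and the supporting hyperplane theorem \cite[Theorem~11.6]{roc} yields $a\in\R^n\setminus\{0\}$ with $a^Tu\le a^Tx$ for all $x\in\conv S$, in particular for all $x\in S$. If instead $(\conv S)^\circ=\emptyset$, then $\conv S$, and hence $S$, is contained in a hyperplane $\{x\in\R^n\mid a^Tx=c\}$ with $a\ne0$, so that $a^Tu=c=a^Tx$ for all $x\in S$. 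In either case $f:=a_1X_1+\dots+a_nX_n\in\R[\x]_1\setminus\{0\}$ satisfies $f(u)\le f(x)$ for all $x\in S$, which is what was required.

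The argument is routine; the only point requiring care is the case distinction in the second inclusion, which is what guarantees simultaneously that a genuine \emph{supporting} hyperplane is available (when $\conv S$ is full-dimensional) and that the separating functional can be taken to be a \emph{homogeneous} linear form rather than merely affine (automatic in the degenerate case, by choosing $a$ normal to the affine hull of $S$).
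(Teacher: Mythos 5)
Your proof is correct and follows essentially the same route as the paper's: the paper simply splits on whether $\aff S=\R^n$ and cites \cite[Corollary~11.6.2]{roc} for the full-dimensional case, declaring the degenerate case trivial, which is exactly your case distinction on $(\conv S)^\circ$ together with the supporting hyperplane theorem. Your version just writes out both inclusions explicitly, including the easy $\supseteq$ direction that the cited Rockafellar corollary already subsumes.
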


\begin{proof}
If the affine hull of $S$ is $\R^n$, then this follows from \cite[11.6.2]{roc} and otherwise it is trivial.
\end{proof}

\subsection{Lagrange multipliers from a real closed field}

\begin{nt}
For $g\in\R[\x]$, we denote by
\[Z(g):=\{x\in\R^n\mid g(x)=0\}\]
the real zero set of $g$.
\end{nt}

\begin{lem}\label{prep1}
Let $B\subseteq\R^n$ be a closed ball in $\R^n$,
suppose that $g_1,\ldots,g_m\in\R[\x]$ are strictly quasiconcave on $B$ and set $\g:=(g_1,\ldots,g_m)$.
Then the following hold:
\begin{enumerate}[(a)]
\item $S(\g)\cap B$ is convex.
\item Every linear form from $\R[\x]\setminus\{0\}$ has at most one minimizer on $S(\g)\cap B$.
\item Let $u$ be a minimizer of the linear form $f\in\R[\x]\setminus\{0\}$ on $S(\g)\cap B$ and set
\[I:=\{i\in\{1,\ldots,m\}\mid g_i(u)=0\}.\]
Then $u$ is also minimizer of $f$ on
\[S:=\{x\in B\mid\forall i\in I:g_i(x)\ge0\}\supseteq S(\g)\cap B.\] 
\end{enumerate}
\end{lem}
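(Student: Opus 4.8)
The plan is to prove (a), (b), (c) in turn, each by an elementary one-dimensional argument: restricted to a line segment through a point of $B$, a strictly quasiconcave polynomial cannot have a local minimum at which its first derivative along the segment vanishes, since its second derivative there would be simultaneously $\ge0$ (second-order necessary condition for a minimum) and, by strict quasiconcavity, $<0$. So first I would take distinct $x,y\in S(\g)\cap B$; convexity of $B$ gives $[x,y]\subseteq B$. If some $g_i$ were negative somewhere on $[x,y]$, then $h(t):=g_i((1-t)x+ty)$ is $\ge0$ at the endpoints but attains a negative minimum at some interior $t^\ast\in(0,1)$, whence $h'(t^\ast)=0$ and $h''(t^\ast)\ge0$; writing $w:=(1-t^\ast)x+t^\ast y\in B$ and $v:=y-x\ne0$, this reads $(\nabla g_i(w))^Tv=0$ and $v^T(\hess g_i)(w)v\ge0$, contradicting strict quasiconcavity of $g_i$ at $w$. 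Hence $[x,y]\subseteq S(\g)\cap B$, proving (a); the same argument with the single function $g_i$ shows each $S(g_i)\cap B$ is convex, a fact I will reuse in (c).

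For (b), suppose a nonzero linear form $f$ has two distinct minimizers $u,v$ on $S(\g)\cap B$. By (a) the segment $[u,v]$ lies in $S(\g)\cap B$, and $f$, being affine, is constant on it, so its midpoint $w$ is again a minimizer. Since $B$ is a genuine (nondegenerate) ball, $u\ne v$ forces $w\in B^\circ$. If $g_i(w)=0$ for some $i$, then $w$ is an interior minimizer of $g_i$ along $[u,v]$, so $(\nabla g_i(w))^T(v-u)=0$ and $(v-u)^T(\hess g_i)(w)(v-u)\ge0$, contradicting strict quasiconcavity at $w\in B$. Otherwise all $g_i(w)>0$ and $w\in B^\circ$, so $w$ lies in the interior of $S(\g)\cap B$, whence minimality of $f$ forces $\nabla f(w)=0$; this is impossible since a nonzero linear form has a nonzero constant gradient.

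For (c), given $x\in S$ with $f(x)<f(u)$, set $v:=x-u\ne0$, so $[u,x]\subseteq B$ by convexity of $B$. For $i\in I$ both $u$ and $x$ lie in $S(g_i)\cap B$ (using $g_i(u)=0$ and, since $x\in S$, $g_i(x)\ge0$), and $S(g_i)\cap B$ is convex by (a), so $g_i\ge0$ on all of $[u,x]$. For $i\notin I$ we have $g_i(u)>0$, hence by continuity $g_i>0$ on an initial portion $\{u+tv:0\le t\le t_0\}$ of the segment. Therefore $u+tv\in S(\g)\cap B$ for all small $t>0$, while $f(u+tv)=f(u)+t(f(x)-f(u))<f(u)$, contradicting that $u$ minimizes $f$ on $S(\g)\cap B$. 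Hence $f\ge f(u)$ on $S$, and since $S\supseteq S(\g)\cap B\ni u$ the value $f(u)$ is indeed attained, so $u$ is a minimizer of $f$ on $S$.

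\textbf{Main obstacle.} None of this is deep; the only point demanding care is (b), where one must invoke strict convexity of the ball to place the midpoint in $B^\circ$ and then cleanly separate the ``some constraint active'' case (finished off exactly as in (a)) from the ``genuinely interior point'' case (finished off by the first-order condition together with $f\ne0$). I would also dispatch the degenerate situations (a one-point $B$, or a linear form with no minimizer on $S(\g)\cap B$ at all) at the outset so they do not clutter the main argument. An alternative route to (c) via Lemma~\ref{lagrange} (Lagrange multipliers for the active constraints, after shifting $f$ by $f(u)$) is possible but would require extra care about whether the ball constraint is active at $u$, so the direct segment argument above is preferable.
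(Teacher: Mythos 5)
Your proof is correct, and your part (a) is essentially the paper's argument (restrict to the segment, get $h'(t^\ast)=0$ at an interior minimum, contradict strict quasiconcavity via the second derivative). Parts (b) and (c), however, take a genuinely different route. For (b) the paper invokes Proposition~\ref{convbdchar} to place both minimizers in $\convbd(S(\g)\cap B)$, deduces that the whole relative interior of the segment lies in $\partial(S(\g)\cap B)\cap B^\circ\subseteq Z(g_1\dotsm g_m)$, and then uses a Zariski-type argument (a univariate polynomial vanishing on the open segment is zero, so a single $g_i$ vanishes on the whole segment) to contradict strict quasiconcavity at an endpoint. You instead work at the midpoint $w$, split into the case where some $g_i(w)=0$ (handled by strict quasiconcavity, exactly as in (a)) and the case where $w$ is interior to $S(\g)\cap B$ (handled by $\nabla f(w)=0$ versus $\nabla f\ne0$). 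This avoids both the convex-boundary machinery and the Zariski step, at the cost of a two-case split. For (c) the paper observes that $S$ and $S(\g)\cap B$ agree near $u$, so $u$ is a local minimizer of $f$ on the convex set $S$, and local implies global for linear objectives on convex sets; you instead run a direct segment argument from $u$ towards a hypothetical better point $x\in S$, using convexity of each $S(g_i)\cap B$ for $i\in I$ and openness of $\{g_i>0\}$ for $i\notin I$. Both are sound; the paper's version is shorter because it reuses the convexity statement (a) at the level of sets, while yours is more self-contained and spells out the segment analysis explicitly. Your remark about dispatching degenerate cases (one-point $B$, no minimizer) is prudent, though under the paper's convention a ``closed ball'' has positive radius, so $B^\circ\ne\emptyset$ automatically.
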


\begin{proof}
(a)
Let $x,y\in S(\g)\cap B$ with $x\ne y$ and $i\in\{1,\ldots,m\}$. The polynomial \[f:=g_i(Tx+(1-T)y)\in\R[T]\]
attains a minimum $a$ on $[0,1]_\R$. We have to show $a\ge0$.
Because of $f(0)=g_i(y)\ge0$ and $f(1)=g_i(x)\ge0$, it is enough to show that this minimum is not attained
in a point $t\in(0,1)_\R$. Assume it is. Then $f'(t)=0$, i.e., $((\nabla g_i)(z))^Tv=0$ for $z:=tx+(1-t)y$
and $v:=x-y\ne0$. Since $z\in B$ and hence $g_i$ is strictly quasiconcave at $z$, it follows that
$v^T((\hess g_i)(z))v<0$, i.e., $f''(t)<0$. Then $f<a$ on a neighborhood of $t$ $\lightning$.

\medskip
(b) Suppose $x$ and $y$ are minimizers of the linear form $f\in\R[\x]\setminus\{0\}$ on $S(\g)\cap B$.
Then $x,y\in\convbd(S(\g)\cap B)$ by Proposition~\ref{convbdchar}. Since $f$ is linear, it is constant on $\conv\{x,y\}$. Hence even
\[\conv\{x,y\}\overset{\text{Prop. }\ref{convbdchar}}{\underset{\text{(a)}}\subseteq}\convbd S\overset{\text{(a)}}=S\cap\partial S
=S\cap(\overline S\setminus S^\circ)\overset{\text{$S$ closed}}=S\setminus S^\circ=\partial S.\]
Since $\conv\{x,y\}\setminus\{x,y\}\subseteq B^\circ$, we have that
$\conv\{x,y\}\setminus\{x,y\}\subseteq Z(g_1\dotsm g_m)$.
Assume now for a contradiction that $x\ne y$.
Then this implies that at least one of the $g_i$ vanishes on $\conv\{x,y\}$. Fix a corresponding $i$.
Setting $v:=y-x$, we have then $((\nabla g_i)(x))^Tv=0$ and $v^T((\hess g_i)(x))v=0$.
Since $g_i$ is strictly quasiconcave at $x$, this implies $v=0$, i.e., $x=y$ as desired.

\medskip
(c) By definition of $I$, the sets $S(\g)\cap B$ and $S$ coincide on a neighborhood of $u$ in $\R^n$. Hence $u$ is a
\emph{local} minimizer of $f$ on $S$. Since $S$ is convex by (a) and $f$ is linear, $u$ is also a (\emph{global})
minimizer of $f$ on $S$.
\end{proof}

\begin{lem}\label{lagrange2}
Suppose $B$ is a closed ball in $\R^n$,
$g_1,\ldots,g_m\in\R[\x]$ are strictly quasiconcave on $B$ and $S(\g)\cap B$
has nonempty interior. Then the following hold:
\begin{enumerate}[(a)]
\item For every real closed extension field $R$ of $\R$
and all linear forms $f\in R[\x]\setminus\{0\}$,
$f$ has a unique minimizer on $(S(\g)\cap B)_R$.
\item
For every real closed extension field $R$ of $\R$,
all linear forms $f\in R[\x]$ with \[\|\nabla f\|_2=1\]
(note that $\nabla f\in R^n$) and every
$u$ which minimizes $f$ on $(S(\g)\cap B^\circ)_R$,
there are $\la_1,\ldots,\la_m\in\O_R\cap R_{\ge0}$ with
$\la_1+\ldots+\la_m\notin\m_R$ such that \[f-f(u)-\sum_{i=1}^m\la_ig_i\in I_u^2.\]
\end{enumerate}
\end{lem}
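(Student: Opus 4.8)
The plan is to obtain (a) by transfer and (b) by setting up the first-order optimality conditions at $u$ over $R$, reading the finiteness of the Lagrange multipliers off from a Slater point in the interior of $S(\g)\cap B$. For (a): the $g_i$ and the ball $B$ are fixed real data, and ``every linear form in $R[\x]\setminus\{0\}$ has a unique minimizer on $(S(\g)\cap B)_R$'' is a first-order sentence in the sense of Subsection \ref{subs:tarski}, with the coefficients of the $g_i$ and the centre and radius of $B$ as parameters. Over $\R$ it holds, because by Lemma \ref{prep1}(a) the set $S(\g)\cap B$ is convex and, being closed, bounded, and nonempty by the interior hypothesis, compact, so a nonzero linear form attains a minimum there, which is unique by Lemma \ref{prep1}(b). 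By the Tarski transfer principle (Subsection \ref{subs:tarski}) the sentence then holds over $R$, which is exactly (a).

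For (b), fix $R$, a linear form $f$ with $\|\nabla f\|_2=1$ and a minimizer $u$ of $f$ on $(S(\g)\cap B^\circ)_R$. Set $I:=\{i\mid g_i(u)=0\}$ and $\lambda_i:=0$ for $i\notin I$; complementary slackness is then automatic, $\bigl(f-f(u)-\sum_i\lambda_ig_i\bigr)(u)=-\sum_i\lambda_ig_i(u)=0$, and the claim reduces to finding $\lambda_i\in\O_R\cap R_{\ge0}$ for $i\in I$ with $\nabla f=\sum_{i\in I}\lambda_i\nabla g_i(u)$ and $\sum_i\lambda_i\notin\m_R$: granting this, $f-f(u)-\sum_i\lambda_ig_i$ has coefficients in $\O_R$ (note $\nabla f\in\O_R^n$ as $\|\nabla f\|_2=1$, and $u\in\O_R^n$), vanishes at $u$, and has vanishing gradient at $u$, hence lies in $I_u^2$ by Lemma \ref{membershipix}. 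Since the segment from $u$ to any point of $(S(\g)\cap B)_R$ stays in $(S(\g)\cap B)_R$ (convexity, i.e.\ the transfer of Lemma \ref{prep1}(a)) and begins inside $B^\circ$ (because $u\in B^\circ_R$), minimality on $(S(\g)\cap B^\circ)_R$ forces minimality on $(S(\g)\cap B)_R$, so $u$ locally minimizes $f$ on $\{x\mid g_i(x)\ge0,\ i\in I\}$ over $R$; the existence of multipliers $\lambda_i\in R_{\ge0}$ with $\nabla f=\sum_{i\in I}\lambda_i\nabla g_i(u)$ then follows by transferring to $R$ the real statement obtained by combining Lemma \ref{prep1}(c) with Lemma \ref{lagrange}, whose hypotheses (a neighbourhood of the minimizer on which the feasible set is convex and not a singleton, and the active $g_i$ quasiconcave) are met for the real $g_i$ thanks to strict quasiconcavity and the nonempty-interior assumption.

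It remains to make the $\lambda_i$ finite and to see that $\sum_i\lambda_i$ is not infinitesimal. Pick a \emph{real} interior point $p_0\in(S(\g)\cap B)^\circ$; then $p_0\in B^\circ$, $g_i(p_0)>0$ for all $i$, and $w_0:=p_0-u\in\O_R^n$. With $\bar u:=\st(u)\in S(\g)\cap B$ we have $g_i(\bar u)=0$ for $i\in I$, $g_i$ is quasiconcave at $\bar u$ (as $\bar u\in B$ and the $g_i$ are strictly quasiconcave on $B$), $p_0\neq\bar u$, and $g_i\ge0$ on $\conv\{\bar u,p_0\}\subseteq S(\g)\cap B$, so Remark \ref{gradientpositive}, applied over $\R$ at $\bar u$, gives $\st\bigl(\nabla g_i(u)\cdot w_0\bigr)=\nabla g_i(\bar u)\cdot(p_0-\bar u)>0$ for each $i\in I$; hence $\nabla g_i(u)\cdot w_0\ge c$ for some real $c>0$. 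Dotting $\nabla f=\sum_{i\in I}\lambda_i\nabla g_i(u)$ with $w_0$ gives $c\sum_i\lambda_i\le\nabla f\cdot w_0\le\|w_0\|_2\in\O_R$, so $\sum_i\lambda_i\in\O_R$ and each $\lambda_i\in\O_R\cap R_{\ge0}$; and $\sum_i\lambda_i\notin\m_R$ because otherwise every $\lambda_i\in\m_R$, forcing $\nabla f=\sum_i\lambda_i\nabla g_i(u)\in\m_R^n$ (as $\nabla g_i(u)\in\O_R^n$) and contradicting $\|\nabla f\|_2=1$.

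The crux is the finiteness $\lambda_i\in\O_R$: the naive route of transferring real Lagrange multipliers and then bounding a linear system by Cramer's rule breaks down, since the standard parts $\nabla g_i(\bar u)$ of $R$-linearly independent gradients $\nabla g_i(u)$ need not be $\R$-linearly independent. Pairing the multiplier equation with the interior Slater direction $w_0$, whose inner products with the active gradients have strictly positive standard part, sidesteps this, and it is precisely here that strict quasiconcavity (through Remark \ref{gradientpositive}) and the interior hypothesis on $S(\g)\cap B$ are used.
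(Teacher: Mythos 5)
Your proof is correct and takes essentially the same route as the paper's: part (a) is by Tarski transfer from Lemma~\ref{prep1}(a),(b); part (b) transfers the Lagrange multiplier existence from Lemma~\ref{prep1}(c) and Lemma~\ref{lagrange}, then bounds the multipliers by pairing the multiplier equation with a Slater direction and invoking Remark~\ref{gradientpositive} to get positive-standard-part inner products, finally concluding $\sum_i\la_i\notin\m_R$ from $\|\nabla f\|_2=1$. The paper chooses its Slater point in $S^\circ$ where $S:=\{x\in B\mid\forall i\in I:g_i(x)\ge0\}$ rather than in $(S(\g)\cap B)^\circ$, and phrases the last step via a triangle-inequality chain rather than your contrapositive, but these are cosmetic differences.
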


\begin{proof}
(a) By the Tarski transfer principle, it suffices to prove the statement in the case $R=\R$.
But then the unicity follows from Lemma \ref{prep1}(b) and existence from the compactness of $S(\g)\cap B$.

\medskip
(b) Now let $R$ be a real closed field extension of $\R$, $f\in R[\x]$ a linear form with $\|\nabla f\|_2=1$ and
$u$ a minimizer of $f$ on $(S(\g)\cap B^\circ)_R$.
Set \[I:=\{i\in\{1,\ldots,m\}\mid g_i(u)=0\}\]
and define the set
\[S:=\{x\in B\mid\forall i\in I:g_i(x)\ge0\}\supseteq S(\g)\cap B\]
which is convex by  \ref{prep1}(a).
Using the Tarski transfer principle,
one shows easily that $u$ is a minimizer of $f$ on $(S\cap B^\circ)_R$ by Lemma \ref{prep1}(c).
Note also that of course $u\in\O_R^n$ and $\st(u)\in S$.

Now Lemma \ref{lagrange} says in particular that for all linear forms $\widetilde f\in\R[\x]$ and
minimizers $\widetilde u$ of $\widetilde f$ on $S\cap B^\circ$ with $\forall i\in I:g_i(\widetilde u)=0$, there is a family
$(\la_i)_{i\in I}$ in $\R_{\ge0}$ such that \[\nabla\widetilde f=\sum_{i\in I}\la_i\nabla g_i(\widetilde u).\]

Using the Tarski transfer priniciple, we see that actually for all real closed extension fields
$\widetilde R$ of $\R$, all linear forms $\widetilde f\in\widetilde R[\x]$ and
all minimizers $\widetilde u$ of $\widetilde f$ on $(S\cap B^\circ)_R$
with $\forall i\in I:g_i(\widetilde u)=0$, there is a family
$(\la_i)_{i\in I}$ in $R_{\ge0}$ such that
\[\nabla\widetilde f=\sum_{i\in I}\la_i\nabla g_i(\widetilde u).\]

We apply this to $\widetilde R:=R$, $\widetilde u:=u$, $\widetilde f:=f$ and thus obtain a family
$(\la_i)_{i\in I}$ in $R_{\ge0}$ such that
\[(*)\qquad\nabla f=\sum_{i\in I}\la_i\nabla g_i(u).\]

In order to show that $\la_i\in\O_R$ for all $i\in I$, we choose a point $x\in S^\circ\ne\emptyset$ with $\prod_{i\in I}g_i(x)\ne0$
and thus $g_i(x)>0$ for all $i\in I$. Setting $v:=x-u\in\O_R^n$, we get from $(*)$ that
$(\nabla f)^Tv=\sum_{i\in I}\la_i(\nabla g_i(u))^Tv$. Since $\st(u)\in S$ and $S$ is convex,
Remark~\ref{gradientpositive} yields $\st((\nabla g_i(u))^Tv)=(\nabla g_i(\st(u)))^T\st(v)>0$ for all $i\in I$
(use that $\st(u)\ne x$ since $g_i(\st(u))=0$ while $g_i(x)>0$).
Together with $\la_i\ge0$ for all $i\in I$, this shows $\la_i\in\O_R$ for all $i\in I$ as desired.

It now suffices to show that $\sum_{i\in I}\la_i\notin\m_R$. But this is clear since $(*)$ yields in particular
\[1=\|\nabla f\|_2\le\sum_{i\in I}\la_i\|(\nabla g_i)(u)\|_2\le\left(\sum_{i\in I}\la_i\right)\max_{i\in I}\|(\nabla g_i)(u)\|_2\]
(note that $I\ne\emptyset$ by the first inequality) which readily implies $\sum_{i\in I}\la_i\notin\m_R$.
\end{proof}

\begin{lem}\label{finitecontact}
Let $\g=(g_1,\ldots,g_m)\in\R[\x]^m$ such that $S(\g)$ is compact and has nonempty interior near its convex boundary.
Suppose that $g_i$ is strictly quasiconcave on $\convbd(S(\g))\cap Z(g_i)$ for each $i\in\{1,\dots,m\}$.
Let $R$ be real closed extension field of $\R$ and
$f\in R[\x]$ be a linear form with $\|\nabla f\|_2=1$.
Then the following hold:
\begin{enumerate}[(a)]
\item $F:=\{u\in S(\g)\mid\forall x\in S(\g):\st(f(u))\le\st(f(x))\}$
is a finite subset of $\convbd(S(\g))$.
\item We have $S:=(S(\g))_R\subseteq\O_R^n$ and
$f$ has a unique minimizer $x_u$ on \[\{x\in S\mid \st(x)=u\}\] for each $u\in F$.
\item For every $u\in F$, there are
$\la_{u1},\ldots,\la_{um}\in\O_R\cap R_{\ge0}$ with
$\la_{u1}+\ldots+\la_{um}\notin\m_R$ such that \[f-f(x_u)-\sum_{i=1}^m\la_{ui}g_i\in I_{x_u}^2.\]
\end{enumerate}
\end{lem}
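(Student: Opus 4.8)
The plan is to push everything through the real linear form $\hat f:=\st(f)\in\R[\x]$ and then, at each point of $F$ separately, to restrict to a small ball on which only the active constraints matter so that Lemmata~\ref{prep1} and~\ref{lagrange2} apply. First observe that, $f$ being a linear form with $\|\nabla f\|_2=1$, its coefficients are finite, so $\hat f$ is again a linear form and $\|\nabla\hat f\|_2=\st(\|\nabla f\|_2)=1$; in particular $\hat f\in\R[\x]_1\setminus\{0\}$ is a nonconstant linear form. Since $\st(f(u))=\hat f(u)$ for $u\in S(\g)\subseteq\R^n$, the set $F$ is exactly the set of minimizers of $\hat f$ on the compact set $S(\g)$. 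The inclusion $S=(S(\g))_R\subseteq\O_R^n$ in (b) I would get by picking $N\in\N$ with $S(\g)\subseteq[-N,N]^n$ and transferring this inclusion via the Tarski principle. For part (a), Proposition~\ref{convbdchar} (with $\hat f$ as witness) gives $F\subseteq\convbd(S(\g))$ at once. Fixing $u\in F$: since $u\in\partial\conv S(\g)$ it cannot lie in $S(\g)^\circ$, so the active set $I(u):=\{i\mid g_i(u)=0\}$ is nonempty; for $i\in I(u)$ we have $u\in\convbd(S(\g))\cap Z(g_i)$, hence $g_i$ is strictly quasiconcave at $u$, and by Lemma~\ref{strictly2neighbor} on a whole neighbourhood. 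I would then take a closed ball $B_u$ of positive radius about $u$ inside all these neighbourhoods and small enough that $g_j>0$ on $B_u$ for $j\notin I(u)$, so that $S(\g)\cap B_u=S(\g'_u)\cap B_u$ for $\g'_u:=(g_i)_{i\in I(u)}$. Lemma~\ref{prep1}(b) applied to $\g'_u,B_u$ says the nonzero linear form $\hat f$ has at most one minimizer on $S(\g)\cap B_u$; as $u$ is one, $F\cap B_u=\{u\}$, so $F$ is discrete, and being closed and bounded it is finite.

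For part (b), fix $u\in F$ and keep $B_u,\g'_u$. Since $S(\g)$ has nonempty interior near its convex boundary and $u\in\convbd(S(\g))$, some point of $S(\g)^\circ$ lies in $B_u^\circ$, whence $S(\g'_u)\cap B_u=S(\g)\cap B_u$ has nonempty interior. Lemma~\ref{lagrange2}(a), applied to $\g'_u,B_u$ and the nonzero linear form $f$, then yields a unique minimizer $x_u$ of $f$ on $(S(\g'_u)\cap B_u)_R=S\cap(B_u)_R$. Next I would check $\st(x_u)=u$: boundedness of $S(\g)\cap B_u$ (transferred) forces $x_u\in\O_R^n$, and since $S(\g)\cap B_u$ is closed, $\st(x_u)\in S(\g)\cap B_u$; for every real $y\in S(\g)\cap B_u$ the inequality $f(x_u)\le f(y)$ gives $\hat f(\st(x_u))=\st(f(x_u))\le\st(f(y))=\hat f(y)$, so $\st(x_u)$ minimizes $\hat f$ on $S(\g)\cap B_u$ and hence equals $u$ by the uniqueness established in part (a). Finally, any $x\in S$ with $\st(x)=u$ satisfies $\|x-u\|_2\in\m_R$, so $x\in(B_u)_R$; thus $\{x\in S\mid\st(x)=u\}\subseteq S\cap(B_u)_R$, this set contains $x_u$, and $x_u$ is its unique $f$-minimizer because $x_u$ is already the unique $f$-minimizer of the larger set $S\cap(B_u)_R$.

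For part (c), since $\st(x_u)=u$ is the centre of $B_u$ we have $\|x_u-u\|_2\in\m_R$, so $x_u$ lies in the transferred \emph{open} ball and therefore minimizes $f$ on $(S(\g'_u)\cap B_u^\circ)_R$. Applying Lemma~\ref{lagrange2}(b) to $\g'_u,B_u,f$ and the minimizer $x_u$ produces $\la_i\in\O_R\cap R_{\ge0}$ for $i\in I(u)$ with $\sum_{i\in I(u)}\la_i\notin\m_R$ and $f-f(x_u)-\sum_{i\in I(u)}\la_ig_i\in I_{x_u}^2$. Setting $\la_{ui}:=\la_i$ for $i\in I(u)$ and $\la_{ui}:=0$ otherwise then gives the required multipliers, the total sum $\sum_{i=1}^m\la_{ui}=\sum_{i\in I(u)}\la_i$ being unchanged and the membership relation being unaffected.

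The pivotal step is the restriction to the active subsystem $\g'_u$ on a ball so small that $S(\g)\cap B_u=S(\g'_u)\cap B_u$; this is what makes available the hypotheses of Lemmata~\ref{prep1} and~\ref{lagrange2}, which demand strict quasiconcavity of \emph{all} listed polynomials together with nonempty interior. I expect the main obstacle to be the bookkeeping connecting the real picture near $u$ with the picture over $R$ near $x_u$ through the standard part map --- in particular verifying $\st(x_u)=u$, deducing that $\{x\in S\mid\st(x)=u\}$ is caught inside the transferred ball, and extracting the nonempty-interior hypothesis of Lemma~\ref{lagrange2} from the ``nonempty interior near the convex boundary'' assumption.
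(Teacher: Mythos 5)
Your proof is correct and follows essentially the same route as the paper: establish $F\subseteq\convbd(S(\g))$ via Proposition~\ref{convbdchar}, shrink to a small closed ball around each $u\in F$ on which only the active constraints matter, and invoke Lemma~\ref{prep1}(b) for finiteness, Lemma~\ref{lagrange2}(a) for existence/uniqueness of $x_u$, and Lemma~\ref{lagrange2}(b) for the multipliers. The one place you are more explicit than the paper — replacing $\g$ by the active subtuple $\g'_u$ before invoking Lemmata~\ref{prep1} and~\ref{lagrange2}, whose hypotheses demand strict quasiconcavity of \emph{every} listed $g_i$ on the ball — is exactly the right reading of the paper's slightly compressed ``each $g_i$ is positive or strictly quasiconcave on $B$'' step, and your remaining bookkeeping ($\st(x_u)=u$, $\{x\in S\mid\st(x)=u\}\subseteq S\cap(B_u)_R$, $I(u)\ne\emptyset$) is all sound.
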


\begin{proof}
(a) Obviously $\st(f)\ne0$ and hence \[F=\{u\in S(\g)\mid\forall x\in S(\g):(\st(f))(u)\le(\st(f))(x)\}\subseteq
\convbd(S(\g))\] by Proposition \ref{convbdchar}.
We now prove that $F$ is finite.
WLOG $S(\g)\ne\emptyset$. Set \[a:=\min\{(\st(f))(x)\mid x\in S(\g)\}\] so that
\[F=\{u\in S(\g)\mid(\st(f))(u)=a\}.\] By compactness of $S(\g)$, it is enough to show that
every $x\in S(\g)$ possesses a neighborhood $U$ in $S(\g)$ such that $U\cap F\subseteq\{x\}$.
This is trivial for the points in $S(\g)\setminus F$. So consider an arbitrary point $x\in F$.
Since $x\in\convbd(S(\g))$, each $g_i$ is positive or strictly
quasiconcave at $x$. According to Lemma \ref{strictly2neighbor},
we can choose a closed ball $B$ of positive radius around
$x$ in $\R^n$ such that each $g_i$ is positive or strictly
quasiconcave even on $B$. By Lemma \ref{prep1}(b), $\st(f)$ has at most one minimizer on $U:=S(\g)\cap B$,
namely $x$, i.e., $U\cap F\subseteq\{x\}$.

\medskip
(b) First observe that
$S:=(S(\g))_R\subseteq\O_R^n$ since the
transfer from $\R$ to $R$ is an isomorphism of Boolean algebras:
Choosing $N\in\N$ with $S(\g)\subseteq[-N,N]^n$, we have $S\subseteq[-N,N]_R^n\subseteq\O_R^n$.

Now we fix $u\in F$ and we show that $f$ has a unique minimizer on
\[A:=\{x\in S\mid\st(x)=u\}.\]
Choose $\ep\in\R_{>0}$ such that
each $g_i$ is strictly quasiconcave or positive on
the ball \[B:=B_\ep(u).\]
Since $u\in\convbd(S(\g))\subseteq\overline{S(\g)^\circ}$,
Lemma \ref{lagrange2}(a) says that $f$ has a unique minimizer $x$ on $(S(\g)\cap B)_R$.
Because of $A\subseteq(S(\g)\cap B)_R$, it is thus enough to show $x\in A$.
Note that $u\in F\cap B\subseteq S(\g)\cap B\subseteq(S(\g)\cap B)_R$ and thus
$f(x)\le f(u)$. This implies $\st(f(\st(x)))=\st(f(x))\le\st(f(u))$ which yields together with $\st(x)\in S(\g)$ that
$\st(x)\in F$ (and $\st(f(\st(x)))=\st(f(u))$). Again by Lemma \ref{lagrange2}(a), $\st(f)$ has a unique minimizer on
$S(\g)\cap B$ . But $u$ and $\st(x)$ are both a minimizer of $\st(f)$ on $S(\g)\cap B$ (note that $\st(x)\in S(\g)\cap B$).
Hence $u=\st(x)$ and thus $x\in A$ as desired.

\medskip
(c) Fix $u\in F$. Choose again $\ep\in\R_{>0}$ such that
each $g_i$ is strictly quasiconcave or positive on
the ball $B:=B_\ep(u)$ and such that $B\cap F=\{u\}$.
Since $x_u$ obviously minimizes $f$ on $(S(\g)\cap B)_R$, we get the necessary
Lagrange multipliers by Lemma~\ref{lagrange2}(b).
\end{proof}

\subsection{Linear polynomials and quadratic modules}

\begin{rem}\label{calculusexo}
For all $k\in\N$ and $x\in[0,1]_\R$, we have $x(1-x)^k\le\frac1k$.
\end{rem}

The main geometric idea in the proof of the following theorem is as follows: Consider a hyperplane that isolates a basic closed
semialgebraic subset of $\R^n$ and that is defined over a real closed extension field of $\R$. Because we want to apply
Theorem $\ref{mainrep}$ to get a sums of squares ``isolation certificate'', the points where the hyperplane gets infinitesimally
close to the set pose problems unless the hyperplane \emph{exactly} touches the set in the respective point. The idea is
to find a nonlinear infinitesimal deformation of the hyperplane so that all ``infinitesimally near points'' becoming ``touching points''.
This would be easy if there is at most one ``infinitesimally near point'' but since we deal in this article
with \emph{not necessarily convex} basic closed semialgebraic sets, it is crucial to cope with several such points.

\begin{thm}\label{linearstability}
Let $\g\in\R[\x]^m$ such that $M(\g)$ is Archimedean and suppose that $S(\g)$ has nonempty interior near its convex boundary.
Suppose that $g_i$ is strictly quasiconcave on $(\convbd S)\cap Z(g_i)$ for each
$i\in\{1,\dots,m\}$. Let $R$ be a real closed extension field of $\R$ and $\ell\in\O_R[\x]_1$ such that $\ell\ge0$ on $S_R$.
Then $\ell$ lies in the quadratic module generated by $\g$ in $\O_R[\x]$.
\end{thm}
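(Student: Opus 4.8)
The plan is to peel off, via Lemma~\ref{finitecontact}, the finitely many points where the hyperplane $\{\ell=0\}$ touches $S(\g)_R$ to first order, to replace $\ell$ by a nonlinear $p\in\O_R[\x]$ that vanishes doubly \emph{exactly} at these points while staying positive on the rest of $S(\g)$, and then to invoke Theorem~\ref{mainrep}. Write $Q$ for the quadratic module generated by $\g$ in $\O_R[\x]$; it is Archimedean (it contains $M(\g)$), and its associated set $\{x\in\R^n\mid\st(p(x))\ge0\text{ for all }p\in Q\}$ equals $S(\g)$. First I would dispose of degenerate cases by routine scaling reductions: if $\nabla\ell=0$ then $\ell$ is a nonnegative constant, hence a square, hence in $Q$; and if $\st(\ell)>0$ on $S(\g)$ — in particular if $S(\g)=\emptyset$, or if after dividing $\ell$ by the largest absolute value of its coefficients the result has positive constant standard part — then Theorem~\ref{mainrep} with $k=0$ already yields $\ell\in Q$. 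Since every element of $\O_R\cap R_{\ge0}$ is a square in $\O_R$, dividing $\ell$ by $\|\nabla\ell\|_2$ (iterating the rescaling once if $\nabla\ell$ is infinitesimal) reduces us to $\|\nabla\ell\|_2=1$ and $S(\g)\ne\emptyset$. Applying Lemma~\ref{finitecontact} to $f:=\ell$ then produces a finite set $F=\{u_1,\dots,u_k\}\subseteq\convbd S(\g)$, points $x_{u_1},\dots,x_{u_k}\in\O_R^n$ with $\st(x_{u_l})=u_l$ minimizing $\ell$ on their monads in $S(\g)_R$, and for every $l$ multipliers $\la_{l1},\dots,\la_{lm}\in\O_R\cap R_{\ge0}$ with $\la_{l1}+\dots+\la_{lm}\notin\m_R$, with $\la_{li}=0$ unless $g_i(x_{u_l})=0$, and with $\ell-\ell(x_{u_l})-\sum_i\la_{li}g_i\in I_{x_{u_l}}^2$; note $\ell(x_{u_l})\in\O_R\cap R_{\ge0}$.

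For the deformation I fix a large even $N$ and set $\hat g_i:=g_i(1-g_i)^N=\bigl((1-g_i)^{N/2}\bigr)^2g_i$; then $\{x\mid\hat g_i(x)\ge0\}=\{x\mid g_i(x)\ge0\}$, and at each $x_{u_l}$ with $g_i(x_{u_l})=0$ one has $\hat g_i(x_{u_l})=0$, $\nabla\hat g_i(x_{u_l})=\nabla g_i(x_{u_l})$ and $\hess\hat g_i(x_{u_l})=\hess g_i(x_{u_l})-2N\nabla g_i(x_{u_l})\nabla g_i(x_{u_l})^T$ by Remark~\ref{derexo}. Strict quasiconcavity of the constraints active at $u_l$ (for directions in the kernel of $\sum_i\st(\la_{li})\nabla g_i(u_l)\nabla g_i(u_l)^T$, using $\sum_i\st(\la_{li})>0$) together with the $2N(\cdots)$-term (for the complementary directions) and a compactness argument on the unit sphere as in Remark~\ref{quasiconcavelift} show that a single even $N$ makes the real symmetric matrices
\[A_l:=\sum_{i=1}^m\st(\la_{li})\bigl(2N\nabla g_i(u_l)\nabla g_i(u_l)^T-\hess g_i(u_l)\bigr)\qquad(l=1,\dots,k)\]
all positive definite. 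Next I choose polynomial bump functions $w_1,\dots,w_k\in\O_R[\x]$ with $w_j-\de_{jl}\in I_{x_{u_l}}^{r}$ for all $j,l$ (with a fixed $r\ge4$; so in particular $w_j(x_{u_l})=\de_{jl}$, $\nabla w_j(x_{u_l})=0$, $\hess w_j(x_{u_l})=0$), bounded on $S(\g)_R$, and with $\st(w_j)$ concentrated near $u_j$; these arise by composing a fixed one-variable Hermite polynomial having a zero of order $\ge r$ at $1$ and a zero of order $\ge2$ at $0$ with a product of a high power of $1-c_j u_{x_{u_j}}$ and the $\prod_{l\ne j}u_{x_{u_l}}$, where $u_x$ is as in Lemma~\ref{ixunit} and $c_j\in\R_{>0}$ is small. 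Setting
\[\sigma_0:=\sum_{j=1}^k\ell(x_{u_j})w_j^2,\quad\sigma_i:=\sum_{j=1}^k\la_{ji}w_j^2\ (1\le i\le m),\quad q:=\sigma_0+\sum_{i=1}^m\sigma_i\hat g_i,\quad p:=\ell-q,\]
all $\sigma_i$ are sums of squares in $\O_R[\x]$ because $\ell(x_{u_j}),\la_{ji}\in\O_R\cap R_{\ge0}$, hence $q\in Q$.

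It then remains to verify the hypotheses of Theorem~\ref{mainrep} for $p$, with $Q$ in the role of $M$ and the $x_{u_l}$ in the role of the $x_i$. From $w_j(x_{u_l})=\de_{jl}$, $\nabla w_j(x_{u_l})=0$, $\hess w_j(x_{u_l})=0$, the identities $\hat g_i(x_{u_l})=0$ resp.\ $\la_{li}=0$, and the Lagrange relation one computes $p(x_{u_l})=0$, $\nabla p(x_{u_l})=\nabla\ell-\sum_i\la_{li}\nabla g_i(x_{u_l})=0$ and $\st\bigl((\hess p)(x_{u_l})\bigr)=A_l$; thus $p\in\bigcap_l I_{x_{u_l}}^2$ and $\st\bigl(v^T(\hess p)(x_{u_l})v\bigr)>0$ for $v\in\R^n\setminus\{0\}$. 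For $\st(p)>0$ on $S(\g)\setminus F$ I argue locally: near $u_l$ the constant and linear parts of $\st(p)$ vanish and $\st(p(x))=\tfrac12(x-u_l)^TA_l(x-u_l)+(\text{higher order})>0$ on a punctured ball $B_\rho(u_l)$ whose radius depends only on $A_l$ and the fixed data — here the order-$r$ flatness of the $w_j$ at the $x_{u_l}$ is what keeps the higher-order remainder from growing with the concentration parameter — while on the compact set $S(\g)\setminus\bigcup_lB_\rho(u_l)$ one has $\st(\ell)\ge\eta>0$ and $0\le\st(q)<\eta$ once the bumps are concentrated enough, the latter because $\st(w_j)$ there is exponentially small in the concentration parameter. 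Theorem~\ref{mainrep} then gives $p\in Q$, whence $\ell=p+q\in Q$, which is the assertion.

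The hard part — and the reason for the delicacy of Lemmata~\ref{prep1}, \ref{lagrange2}, \ref{finitecontact} and of this argument — is handling several contact points $u_l$ simultaneously: concentrating a bump $w_j$ more makes the ball $B_\rho(u_l)$ on which $\st(p)$ is governed by $A_l$ shrink (through the higher-order terms), which in turn demands still more concentration in order to beat $\st(\ell)$ off that ball, a seeming circularity. I expect to break it exactly by forcing the $w_j$ to agree with $\de_{jl}$ to a sufficiently high order $r$ at every $x_{u_l}$: then $\rho$ shrinks only polynomially in the concentration parameter whereas the bump tails still decay exponentially, so a finite concentration does the job. The remaining subtle points are the uniform choice of the power $N$ making all the $A_l$ positive definite at once, and the observation that the Lagrange multipliers $\la_{ji}$ and the values $\ell(x_{u_j})$ from Lemma~\ref{finitecontact}, being squares in $\O_R$, can be threaded into genuine sum-of-squares coefficients $\sigma_i$.
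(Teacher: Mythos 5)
The proposal takes a genuinely different route from the paper, and I believe that route has a gap that the paper's argument is specifically designed to avoid.

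You replace the paper's Chinese-Remainder construction (fixed polynomials $s_0,\dots,s_m$ with $s_i\equiv_{I_{x_u}^3}\sqrt{\la_{ui}}$ for \emph{all} $u\in F$ simultaneously) by bump functions $w_j$ carrying a concentration parameter $L$, and you fix the power $N$ in $\hat g_i=g_i(1-g_i)^N$ once and for all. The cost is that you now have two knobs ($N$ and $L$) with conflicting effects, whereas the paper has a single knob $k$ whose effect is monotone. Concretely, the paper's $f_k:=\st(\ell)-\sum_i\st(s_i^2)(1-g_i)^{2k}g_i$ is \emph{nondecreasing in $k$} on $S(\g)$ (since $0\le g_i\le1$ there), $(\hess f_k)(u)$ is nondecreasing in $k$ in the PSD order, and Remark~\ref{calculusexo} gives the uniform bound $g_i(1-g_i)^{2k}\le\frac1{2k}$; so a single $k$ achieves both positive definiteness of the Hessians at $F$ and positivity off any fixed neighbourhood $U$ of $F$, with no trade-off. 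Your argument, by contrast, has no such monotonicity, and you are forced into the circularity you yourself flag.

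The claimed resolution — ``$\rho$ shrinks only polynomially in the concentration parameter whereas the bump tails still decay exponentially'' — is the gap. Write $\rho(L)$ for the radius on which $\frac12(x-u_l)^TA_l(x-u_l)$ dominates the remainder $R$ of $\st(p)$ at $u_l$. The degree-$3$ part of $R$ is fixed (it comes from $\hat g_i$ alone), but the degree-$\ge r{+}1$ part comes from $\st(w_l)^2-1\in I_{u_l}^r$, whose order-$r$ derivatives at $u_l$ and near it scale like $L^r$ (each application of the chain rule to $(1-c_ju_{x_{u_j}})^L$ brings down a factor $L$). So the remainder obeys $|R(x)|\lesssim C_1\|x\|^3+L^{\,r+1}\|x\|^{r+1}$, which forces $\rho(L)\lesssim L^{-(r+1)/(r-1)}$, i.e.\ the exponent $a:=(r+1)/(r-1)$ is \emph{greater than $1$} for every $r$, and in particular $a\ge\frac12$. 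But for the bump tails $(1-c\,\rho(L)^2)^{2L}\approx e^{-2cL\rho(L)^2}=e^{-2cL^{1-2a}}$ actually to decay you need $a<\frac12$. So increasing $r$ does not break the circularity; the exponent $s$ in $M(L)\sim L^s$ grows with $r$ (indeed $s\sim r$), and the condition $r>2s+1$ you would need is never met. On top of this, the bump construction as described does not give $w_j-\de_{jl}\in I_{x_{u_l}}^r$ for $j\ne l$ (composing with a polynomial having a zero of order $2$ at $0$ only yields membership in $I_{x_{u_l}}^4$), and there is a normalisation issue at $x_{u_j}$ because $\prod_{l\ne j}u_{x_{u_l}}(x_{u_j})\ne1$.

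Everything else in your argument is sound and matches the paper: the degenerate-case reductions, the appeal to Lemma~\ref{finitecontact}, the computation showing $p\in\bigcap_lI_{x_{u_l}}^2$ and $\st(\hess p)(x_{u_l})=A_l$, the use of Remark~\ref{quasiconcavelift} to pick one $N$ making all $A_l\succ0$, and the observation that the $\la_{ji}$, being nonnegative and finite in $R$, are squares in $\O_R$. To close the gap you should drop the bump functions entirely and instead, as the paper does, encode the interpolation data $\la_{ui}$ into fixed CRT polynomials $s_i$ (with $s_0$ having coefficients in $\m_R$), and then exploit that the single exponent $k$ in $(1-g_i)^{2k}g_i$ can be increased, monotonically improving both the Hessians at $F$ (via Lemma~\ref{quasi2concave}/Remark~\ref{derexo}) and the off-$F$ bound (via $x(1-x)^k\le\frac1k$ on $[0,1]$).
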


\begin{proof}
We will apply Theorem $\ref{mainrep}$. Since $S(\g)$ is compact, we can rescale the $g_i$ and suppose WLOG that
\[g_i\le1\text{ on }S(\g)\]
for $i\in\{1,\ldots,m\}$.
Let $M$ denote the quadratic module generated by $\g$ in $\O_R[\x]$.
Since $M(\g)$ is Archimedean, also $M$ is Archimedean by 
Propositions \ref{archmodulechar}(b) and \ref{archmodulecharrcf}(b). Moreover, $S$ could now alternatively be defined
from $M$ as in Theorem~$\ref{mainrep}$. Write
\[\ell=f-c\]
with a linear form $f\in\O_R[\x]$ and $c\in\O_R$.
By a rescaling argument, we can suppose that at least one of the coefficients of $\ell$ lies
in $\O_R^\times$.
If $\st(\ell(x))>0$ for all $x\in S(\g)$, then Theorem $\ref{mainrep}$ applied to $\ell$ with $k=0$ yields $\ell\in M$
and we are done. Hence we can from now on suppose that there is some $u\in S$ with $\st(\ell(u))=0$. For such an $u$,
we have $\st(c)=\st(f(u))$ so that at least one coefficient of $f$ must lie in $\O_R^\times$.
By another rescaling, we now can suppose WLOG that $\|\nabla f\|_2=1$. Now we are in the situation of
Lemma~\ref{finitecontact} and we define
\[F,\quad (x_u)_{u\in F}\quad\text{and}\quad
(\la_{ui})_{(u,i)\in F\times\{1,\ldots,m\}}\]
accordingly. Note that
\[F=\{u\in S(\g)\mid\st(\ell(u))=0\}\ne\emptyset\]
since $\st(\ell(x))\ge0$ for all $x\in S(\g)$.
We have $f(x_u)-c=\ell(x_u)\ge0$ and \[\st(f(x_u)-c)=\st(\ell(u))=0\] for all $u\in F$.
Hence $f(x_u)-c\in\m_R\cap R_{\ge0}$ for all $u\in F$. We thus have
\[\ell-\underbrace{(f(x_u)-c)}_{=:\la_{u0}\in\m_R\cap R_{\ge0}}-\sum_{i=1}^m\underbrace{\la_{ui}}_{\in\O_R\cap R_{\ge0}}g_i\in I_{x_u}^2\]
for all $u\in F$ by the Lemmata \ref{finitecontact}(c) and \ref{membershipix}. Evaluating this in $x_u$ (and using $g_i(x_u)\ge0$) yields
\begin{gather}
\tag{$*$}g_i(x_u)\ne0\implies\la_{ui}=0\qquad\text{and thus}\\
\tag{$**$}\la_{ui}g_i\equiv_{I_{x_u}^2}\la_{ui}g_i(1-g_i)^k
\end{gather}
for all $u\in F$, $i\in\{1,\ldots,m\}$ and $k\in\N$.
By the Chinese remainder theorem, we find polynomials $s_0,\ldots,s_m\in\O_R[\x]$
such that $s_i\equiv_{I_{x_u}^3}\sqrt{\la_{ui}}\in\O_R$ for all $u\in F$ and $i\in\{0,\ldots,m\}$
because the ideals $I_{x_u}^3$ ($u\in F$) are pairwise coprime by Lemma~\ref{coprime} (use that
$\st(x_u)=u\ne v=\st(x_v)$ for all $u,v\in F$ with $u\ne v$).
By an easy scaling argument, we can even guarantee that the coefficients
of $s_0$ lie in $\m_R$ since $\sqrt\la_{u0}\in\m_R$. Then we have
\begin{equation}
\tag{$***$}s_i^2\equiv_{I_{x_u}^3}\la_{ui}
\end{equation}
which means in other words
\[s_i^2(x_u)=\la_{ui},\qquad(\nabla(s_i^2))(x_u)=0\qquad\text{and}\qquad(\hess(s_i^2))(x_u)=0\]
for all $i\in\{0,\ldots,m\}$  and $k\in\N$.
It suffices to show that there is $k\in\N$ such that the polynomial
\[\ell-s_0^2-\sum_{i=1}^ms_i^2(1-g_i)^{2k}g_i\overset{(***)}{\underset{(**)}\in}\bigcap_{u\in F}I_{x_u}^2\]
lies in $M$ since this implies immediately $\ell\in M$. By Theorem $\ref{mainrep}$, this task reduces to find $k\in\N$ such that
$f_k>0$ on $S(\g)\setminus F$ and $(\hess(f_k))(u)\succ0$ for all $u\in F$ where
\[f_k:=\st(\ell)-\sum_{i=1}^m\st(s_i^2)(1-g_i)^{2k}g_i\in\R[\x]
\]
is the standard part of this polynomial. Note for later use that $f_k$ and $\nabla f_k$ vanish on $F$ for all $k\in\N$.
In order to find such a $k$, we calculate
\begin{align*}
(\hess f_k)(u)&\overset{(***)}=-\sum_{i=1}^m\st(\la_{ui})\hess((1-g_i)^{2k}g_i)(u)\\
&\ \ \overset{\text{Rem. }\ref{derexo}}{\underset{(*)}=}\sum_{i=1}^m\st(\la_{ui})(4k(\nabla g_i)(\nabla g_i)^T-\hess g_i)(u)
\end{align*}
for $u\in F$ and $k\in\N$.
By Lemma \ref{quasi2concave} we can choose $k\in\N$ such that
$g_i(1-g_i)^{2k}$ is strictly concave on $\{x\in F\mid g_i(x)=0\}$ for $i\in\{1,\ldots,m\}$. Since \[\st(\la_1)+\ldots+\st(\la_m)>0\]
by Lemma \ref{finitecontact}(c), we get together with $(*)$ and Remark~\ref{derexo} that for all sufficiently large $k$, we
have $(\hess f_k)(u)\succ0$ for all $u\in F$. In particular, we can choose $k_0\in\N$ such that
$\hess(f_{k_0})(u)\succ0$ for all $u\in F$. Since $f_{k_0}$ and $\nabla f_{k_0}$ vanish on $F$,
we have by elementary analysis that there is an open subset $U$ of $\R^n$ containing $F$ such that $f_{k_0}\ge0$ on $U$.
Then $S(\g)\setminus U$ is compact so that we can choose $N\in\N$ with $\st(\ell)\ge\frac1N$ and
$\st(s_i^2)\le N$ on $S(\g)\setminus U$. Then $f_k\ge\frac1N-m\frac N{2k}$ on $S(\g)\setminus U$ by
Remark \ref{calculusexo} since $0\le g_i\le1$ on $S$ for all $i\in\{1,\ldots,m\}$.
For all sufficiently large $k\in\N$ with $k\ge k_0$, we now have $f_k>0$ on $S\setminus U$ and
because of $f_k\ge f_{k_0}>0$ on $S(\g)\cap U$ (use again that $0\le g_i\le1$ on $S(\g)$) even $f_k>0$ on $S(\g)$.
\end{proof}

For the applications in this article
we do no longer need to work over the ring $\O_R$ of finite elements of a real closed extension field
$R$ of $\R$ (even though this could have other potential applications in the future, cf. Remark \ref{commentonbounds}(b) above)
but can directly work over $\R$. That is why we draw the following corollary.

\begin{cor}\label{linearstabilitycor}
Let $\g\in\R[\x]^m$ such that $M(\g)$ is Archimedean and suppose that $S(\g)$ has nonempty interior near its convex boundary.
Suppose that $g_i$ is strictly quasiconcave on $(\convbd S)\cap Z(g_i)$ for each
$i\in\{1,\dots,m\}$. Let $R$ be a real closed extension field of $\R$ and $\ell\in R[\x]_1$ such that $\ell\ge0$ on $S_R$.
Then $\ell$ lies in the quadratic module generated by $\g$ in $R[\x]$.
\end{cor}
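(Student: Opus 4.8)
The statement differs from Theorem~\ref{linearstability} only in that $\ell$ is now permitted to be an \emph{arbitrary} linear polynomial over $R$, not just one with finite coefficients. The plan is therefore a simple rescaling argument that reduces the corollary to Theorem~\ref{linearstability}: divide $\ell$ by a suitable positive element of $R$ so that the coefficients become finite (in fact bounded by $1$), apply Theorem~\ref{linearstability} over $\O_R[\x]$, and then transport the resulting representation up to $R[\x]$.

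In detail, I would first dispose of the trivial case $\ell=0$, since $0$ lies in every quadratic module. So assume $\ell\ne0$ and let $\la\in R_{>0}$ be the absolute value of a coefficient of $\ell$ of largest absolute value. Then $\ell':=\la^{-1}\ell$ is a linear polynomial all of whose coefficients lie between $-1$ and $1$, hence in $\O_R$, so $\ell'\in\O_R[\x]_1$; and $\ell'\ge0$ on $S_R$ because $\la>0$. The hypotheses on $\g$ are precisely those of Theorem~\ref{linearstability}, which applied to $\ell'$ yields that $\ell'$ lies in the quadratic module $M'$ generated by $\g$ in $\O_R[\x]$. Since $\O_R[\x]\subseteq R[\x]$, we have $M'\subseteq M$, where $M$ denotes the quadratic module generated by $\g$ in $R[\x]$, so $\ell'\in M$. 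Finally, as $R$ is real closed, $\sqrt\la\in R\subseteq R[\x]$, whence
\[\ell=\la\ell'=(\sqrt\la)^2\ell'\in R[\x]^2\,M\subseteq M\]
because $M$ is a quadratic module. This is exactly the assertion of the corollary.

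There is essentially no obstacle: all the substance sits in Theorem~\ref{linearstability}. The only points that need a little care are (i) dividing by a coefficient of \emph{maximal} absolute value, rather than an arbitrary nonzero one, so that the rescaled polynomial really does have coefficients in $\O_R$, and (ii) observing that multiplication by the positive scalar $\la=(\sqrt\la)^2$ preserves membership in a quadratic module over the real closed field $R$, together with the trivial nesting $M'\subseteq M$ of the quadratic modules generated by $\g$ over $\O_R[\x]$ and over $R[\x]$.
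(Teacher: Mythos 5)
Your proof is correct, and since the paper states Corollary~\ref{linearstabilitycor} without an explicit proof (the preceding paragraph makes clear it is meant to be a direct consequence of Theorem~\ref{linearstability}), your rescaling argument reconstructs precisely the intended reduction: normalize by the largest coefficient to land in $\O_R[\x]_1$, invoke Theorem~\ref{linearstability}, and undo the normalization using that $\la=(\sqrt\la)^2$ is a square in $R$ and that the quadratic module generated by $\g$ in $\O_R[\x]$ sits inside the one generated in $R[\x]$.
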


\subsection{Linear polynomials and truncated quadratic modules}

Going now from real closed fields back to the reals,
the general tools from Subsection \ref{subs:tarski}, enable us now
to turn Corollary \ref{linearstabilitycor} into Corollary \ref{linearstabilitybound}
just like we have turned the statement Corollary \ref{mainrep3}
into Theorem \ref{putinarzerosdegreebound} earlier on.

\begin{cor}\label{linearstabilitybound}
Let $\g\in\R[\x]^m$ such that $M(\g)$ is Archimedean and suppose that $S(\g)$ has nonempty interior near its convex boundary.
Suppose that $g_i$ is strictly quasiconcave on $(\convbd S)\cap Z(g_i)$ for each
$i\in\{1,\dots,m\}$.
Then there exists \[d\in\N\] such that for all $\ell\in\R[\x]_1$ with $\ell\ge0$ on $S$, we have \[\ell\in M_d(\g).\]
\end{cor}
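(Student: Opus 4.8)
The plan is to follow the same route by which Theorem \ref{putinarzerosdegreebound} was deduced from Corollary \ref{mainrep3}: encode ``$\ell\ge0$ on $S$'' and ``$\ell$ lies in a degree-bounded truncated quadratic module'' as semialgebraic conditions on the coefficient vector of $\ell$, transfer these to an arbitrary real closed extension field $R$ of $\R$, apply Corollary \ref{linearstabilitycor} there, and then invoke the finiteness theorem from Subsection \ref{subs:tarski}. Concretely, I would set $\nu:=\dim\R[\x]_1=n+1$ and identify a vector $a\in\R^\nu$ with the linear polynomial $\ell_a\in\R[\x]_1$ having coefficient vector $a$ in some fixed order.

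For each $d\in\N$ I would let $S_d\subseteq\R^\nu$ consist of those $a\in\R^\nu$ for which the following holds: \emph{if} $\ell_a\ge0$ on $S(\g)$, \emph{then} $\ell_a$ is a sum of $d$ polynomials each of which has degree at most $d$ and is of the form $p^2g_i$ with $p\in\R[\x]$ and $i\in\{0,\dots,m\}$, where $g_0:=1$. Since every such $p$ has degree at most $\lfloor d/2\rfloor$, this condition refers only to finitely many real parameters, so by real quantifier elimination each $S_d$ is semialgebraic, and for every real closed extension field $R$ of $\R$ the transfer $(S_d)_R$ is described by the very same condition read over $R$, i.e.\ (vacuously, or) membership of $\ell_a$ in the $d$-truncated quadratic module generated by $\g$ in $R[\x]$ with at most $d$ summands. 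Padding a representation with zero summands $0^2\cdot 1$ shows $S_0\subseteq S_1\subseteq S_2\subseteq\cdots$, and any ``sum of $d$ summands of degree $\le d$ of the form $p^2g_i$'' obviously lies in $M_d(\g)$.

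The main step would then be to check that $\bigcup_{d\in\N}(S_d)_R=R^\nu$ for every real closed extension field $R$ of $\R$. Given $a\in R^\nu$: if $\ell_a\ge0$ on $S_R$ fails, then $a\in(S_d)_R$ for every $d$; and if $\ell_a\ge0$ on $S_R$, then Corollary \ref{linearstabilitycor} shows $\ell_a$ lies in the quadratic module generated by $\g$ in $R[\x]$, hence $\ell_a$ is a \emph{finite} sum of terms $p^2g_i$ with $p\in R[\x]$ and $i\in\{0,\dots,m\}$, so choosing $d$ at least as large as both the number and the maximal degree of these terms (and padding) gives $a\in(S_d)_R$. The finiteness theorem then yields $d\in\N$ with $S_d=\R^\nu$, which says precisely that every $\ell\in\R[\x]_1$ with $\ell\ge0$ on $S$ is a sum of $d$ terms of degree $\le d$ of the form $p^2g_i$, and therefore lies in $M_d(\g)$.

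The only genuinely delicate point is the one already handled in the proof of Theorem \ref{putinarzerosdegreebound} and in Remark \ref{commentonbounds}: one must phrase quadratic-module membership with a \emph{bounded} number of summands so that it becomes a first-order, hence semialgebraic, condition on the coefficients of $\ell$ with the expected transfer behaviour. Everything else is routine here; in particular, because the truncated quadratic modules are cones and $\ell$ has a priori bounded degree, no normalization of the size of the coefficients of $\ell$ is needed, in contrast with Theorem \ref{putinarzerosdegreebound}.
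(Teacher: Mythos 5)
Your proposal is correct and follows essentially the same route as the paper: define the semialgebraic sets $S_d$ of coefficient vectors via the bounded-summand formulation of truncated quadratic module membership, verify monotonicity and that $\bigcup_d (S_d)_R=R^{n+1}$ for every real closed $R\supseteq\R$ using Corollary \ref{linearstabilitycor}, and conclude by the finiteness theorem. Your extra remark that no coefficient normalization is needed (unlike in Theorem \ref{putinarzerosdegreebound}) is accurate and matches the paper's proof, which also does without it.
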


\begin{proof}
For each $d\in\N$, consider the set $S_d\subseteq\R^{n+1}$ of all
$a=(a_0,a_1,\ldots,a_n)\in\R^{n+1}$
such that whenever \[\forall x\in S:a_1x_1+\ldots+a_nx_n+a_0\ge0\]
holds, the polynomial $a_1X_1+\ldots+a_nX_n+a_0$ is a sum
of $d$ elements from $\R[\x]$ where each term in the sum is of degree at most $d$
and is of the form $p^2g_i$ with $p\in\R[\x]$ and $i\in\{0,\dots,m\}$ where $g_0:=1\in\R[\x]$
(cf. Remark \ref{commentonbounds}(a)).
By real quantifier elimination, it is easy to see that each $S_d$ is a semialgebraic set,
see Subsection \ref{subs:tarski} above. Obviously, $S_1\subseteq S_2\subseteq S_3
\subseteq\ldots$.
We want to show that $S_d=\R^{n+1}$ for some
$d\in\N$. By the finiteness theorem (see Subsection \ref{subs:tarski} above), it suffices to show 
$\bigcup_{d\in\N}(S_d)_R=R^{n+1}$ for all real closed extension fields $R$ of $\R$. But this follows from
Corollary \ref{linearstabilitycor}.
\end{proof}

\subsection{Consequences for the Lasserre relaxation of a system of polynomial inequalities}\label{subs:cons-solving}

Finally, we harvest our second crop and present our
application to solving systems of polynomial inequalities
that we have announced and discussed already in Subsection \ref{subs:towardssolving}.

\begin{cor}\label{lasserreexact}
Let $\g\in\R[\x]^m$ such that $M(\g)$ is Archimedean and suppose that $S(\g)$ has nonempty interior near its convex boundary.
Suppose that $g_i$ is strictly quasiconcave on $(\convbd S)\cap Z(g_i)$ for each $i\in\{1,\dots,m\}$.
Then there exists $d\in\N$ such that \[S(\g)=\conv S_d(\g).\]
\end{cor}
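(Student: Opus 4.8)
The plan is to derive the corollary as a short duality argument from Corollary~\ref{linearstabilitybound}, where all the substance already resides. First I would collect two preliminary observations. Since $M(\g)$ is Archimedean, Proposition~\ref{archmodulechar}(b) yields $N\in\N$ with $N-(X_1^2+\ldots+X_n^2)\in M(\g)$, so $S(\g)$ is bounded and closed, hence compact; the half-space description of the convex hull (\cite[Theorem 11.5]{roc}, with the closure dropped since $S(\g)$ is compact) then gives
\[\conv S(\g)=\{x\in\R^n\mid\forall\ell\in\R[\x]_1:(\ell\ge0\text{ on }S(\g)\implies\ell(x)\ge0)\}.\]
Second, $S_d(\g)$ is convex for every $d$, being an affine image of the convex set of admissible functionals $L$; hence $\conv S_d(\g)=S_d(\g)$, and the asserted identity is the statement that $S_d(\g)=\conv S(\g)$ for a suitable $d$ (which in turn equals $S(\g)$ exactly when $S(\g)$ is itself convex, a case which is not assumed here). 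So it suffices to exhibit $d\in\N$ with $S_d(\g)=\conv S(\g)$.

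Next I would fix the degree by invoking Corollary~\ref{linearstabilitybound}: choose $d\in\N$ so that every $\ell\in\R[\x]_1$ with $\ell\ge0$ on $S(\g)$ lies in $M_d(\g)$. For this $d$, the inclusion $\conv S(\g)\subseteq S_d(\g)$ is already recorded in Subsection~\ref{subs:towardssolving}: evaluation at points of $S(\g)$ gives $S(\g)\subseteq S_d(\g)$, and $S_d(\g)$ is convex. For the reverse inclusion take $x=(L(X_1),\ldots,L(X_n))\in S_d(\g)$ with $L\colon\R[\x]_d\to\R$ linear, $L(M_d(\g))\subseteq\R_{\ge0}$ and $L(1)=1$. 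Given any $\ell=a_0+a_1X_1+\ldots+a_nX_n\in\R[\x]_1$ with $\ell\ge0$ on $S(\g)$, the choice of $d$ gives $\ell\in M_d(\g)$, so $0\le L(\ell)=a_0L(1)+\sum_{i=1}^na_iL(X_i)=\ell(x)$. Since $\ell$ was arbitrary, the half-space description above yields $x\in\conv S(\g)$, proving $S_d(\g)\subseteq\conv S(\g)$. Hence $S_d(\g)=\conv S(\g)=\conv S_d(\g)$, which is the claim.

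I do not expect any obstacle in this corollary itself: it is purely a dualization of Corollary~\ref{linearstabilitybound}. The entire weight — and the genuine obstacle — sits upstream in Theorem~\ref{linearstability}, namely in the construction, over a non-Archimedean real closed extension field of $\R$, of a nonlinear infinitesimal deformation of an isolating hyperplane that simultaneously turns each of its finitely many ``infinitesimally near'' boundary contact points into an exact touching point, carried out via the Lagrange-multiplier analysis of Lemmata~\ref{prep1}, \ref{lagrange2}, \ref{finitecontact} together with the concavity bookkeeping of Lemma~\ref{quasi2concave}. At this final stage the only points that need care are to extract compactness of $S(\g)$ from the Archimedean hypothesis before using the half-space description of $\conv S(\g)$, and to keep in mind that $S_d(\g)$ is convex, so that the identity to be established is really $\conv S(\g)=S_d(\g)$.
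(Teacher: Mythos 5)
Your proof is correct and is exactly the intended duality argument: choose $d$ from Corollary~\ref{linearstabilitybound}, note $\conv S(\g)\subseteq S_d(\g)$ always, and for $x\in S_d(\g)$ apply the defining functional $L$ to each linear $\ell\ge 0$ on $S(\g)$ (now in $M_d(\g)$) to obtain $\ell(x)=L(\ell)\ge 0$, whence $x\in\conv S(\g)$ by the half-space description of the convex hull of the compact set $S(\g)$. The paper supplies no explicit proof, treating this deduction from Corollary~\ref{linearstabilitybound} as immediate, and your version spells out the missing steps accurately (including the compactness of $S(\g)$ from the Archimedean hypothesis, needed to drop the closure in Rockafellar's half-space description). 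You also rightly flag that the corollary as printed, $S(\g)=\conv S_d(\g)$, cannot be the intended reading when $S(\g)$ is nonconvex — since $S_d(\g)$ is itself convex, that equality would force $S(\g)$ to be convex — and the equality stated in the introduction, $\conv S(\g)=S_d(\g)$, is what you prove.
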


\begin{rem}\label{mildnatural}
By Remark \ref{whatmeansquasiconcavity} (see \cite[Proposition 3.4]{ks} or \cite{hn1,hn2} for more details), it becomes clear that
the hypothesis made in Theorem \ref{linearstability} and in its Corollaries \ref{linearstabilitycor}, \ref{linearstabilitybound} and
\ref{lasserreexact} on the $g_i$ to be strictly quasiconcave on \[(\convbd S)\cap Z(g_i)\] is mild and natural.
On the other hand, it even excludes linear defining polynomials but this is because the statement becomes in general false
for nonconvex $S(\g)$ in this case as \cite[Example 4.10]{ks} shows.
\end{rem}

\begin{ex}\label{onedimex}
The requirement that $S(\g)$ has nonempty interior near its convex boundary cannot be dropped. Consider $n=1$ and
$g=X(1-X)(X-2)^2\in\R[\x]$. Then $S(g)=[0,1]\cup\{2\}$ is compact and moreover $M(g)$ is Archimedean, for example by
Remark \ref{prestel}. Also $g$ is strictly quasiconcave
on $\convbd S(g)=\{0,2\}$ since $g'(0)=4\ne0$, $g'(2)=0$ and $g''(2)=-4<0$.

It is an immediate consequence of a result of Gouveia and Netzer
\cite[Proposition 4.1]{gn} (see also \cite[Theorem 4.9]{ks}) that $g$ has no exact Lasserre relaxation.
By inspection of the proof of Gouveia and Netzer, one sees that each Lasserre relaxation contains a right neighborhood of $2$.
\end{ex}

In the situation of Corollary \ref{lasserreexact}, Helton and Nie proved that $S(\g)$ is a projection of a spectrahedron
\cite[Theorem 3.3]{hn2}. To obtain this result, they showed that certain Lasserre relaxations for
``small boundary pieces'' of $S(\g)$ become exact and they glue together these pieces
obtained non-constructively by a compactness argument. In particular, their arguments do not give any algorithm of how
to compute such a semidefinite lifting whereas we show with our new technique that the initial Lasserre relaxation itself just
succeeds. With again completely different techniques, Helton and Nie tried to restrict themselves to a single Lasserre relaxation
by altering the description of $S(\g)$ but could only achieve
very technical partial results going in the direction of Corollary \ref{lasserreexact}
above \cite[Section 5]{hn2}. This is discussed in more detail at the end of the introduction of \cite{ks}.

Although already stated in Subsection
\ref{subs:towardssolving}, we remind the reader again that our Corollary \ref{lasserreexact} is weaker than
\cite[Main Theorem 4.8]{ks} in the case of a \emph{convex} basic closed semialgebraic set.
We do not know how to extend our method using real closed fields to prove \cite[Main Theorem 4.8]{ks}
and, conversely, it seems that the techniques in \cite{ks} that are clever refinements of the ideas of Helton and Nie \cite{hn1,hn2}
do not seem to work here at all if $S(\g)$ is nonconvex.

A major recent result of Scheiderer that complements our positive results on Lasserre relaxations on the negative side is that
\emph{not} every convex semialgebraic set is a projection of a spectrahedron \cite{s2}.

\subsection{What about quadratic polynomials?}
Finally, we show that Theorem \ref{linearstability} and its corollaries do not extend from linear to quadratic polynomials. We need the following preparation: 

\begin{pro}\label{como}
Let $\g=(g_1,\ldots,g_m)\in\R[\x]^m$ and $x\in S(\g)$. Suppose that there exists $y\in S(\g)\setminus\{x\}$ such that $\conv\{x,y\}\subseteq S(\g)$.
Set \[I:=\{i\in\{1,\ldots,m\}\mid g_i(x)=0\}\] and suppose that $g_i$ is strictly quasiconcave at $x$ for all $i\in I$.
Consider (taking $R:=\R$ in Definition~\ref{adamshom}) \[I_x=(X_1-x_1,\ldots,X_n-x_n)\subseteq\R[\x].\]
Fix $v\in\R^n$ and define $\ph\colon\R[\x]\to\R, p\mapsto v^T(\hess p)(x)v$.
Then \[\ph\left(M(\g)\cap I_x^2\right)\subseteq\R_{\ge0}.\]
\end{pro}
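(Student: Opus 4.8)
The plan is to reduce everything to the one-dimensional situation along the segment $\conv\{x,y\}$, exactly in the spirit of the curvature computations in Lemma~\ref{prep1}(a). Fix $p\in M(\g)\cap I_x^2$; so $p\ge0$ on $S(\g)$, $p(x)=0$ and $\nabla p(x)=0$, and we must show $v^T(\hess p)(x)v\ge0$ for the given $v$. If $v=0$ there is nothing to prove, so assume $v\ne0$. The first step is to handle $v$ in a \emph{generic} position, where $v$ points ``into'' $S(\g)$ at $x$, and then to remove the genericity assumption by a continuity argument. Concretely, I would first show the inequality when $\de\mapsto x+\de v$ stays in $S(\g)$ for small $\de\ge0$: then $t\mapsto p(x+tv)$ is a univariate polynomial that is $\ge0$ for small $t\ge0$, vanishes at $t=0$ together with its first derivative, so its second derivative at $0$, which is $v^T(\hess p)(x)v$, is $\ge0$. (A one-sided double zero of a nonnegative univariate function forces nonnegative second derivative.)

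The second step is to make such directions available in enough abundance. Using $y\in S(\g)\setminus\{x\}$ with $\conv\{x,y\}\subseteq S(\g)$, put $w:=y-x\ne0$. For each $i\in I$ we have $g_i(x)=0$, $g_i\ge0$ on $\conv\{x,y\}$ and $g_i$ strictly quasiconcave at $x$, so Remark~\ref{gradientpositive} gives $(\nabla g_i(x))^Tw>0$; in particular $w$ points strictly inward along every active constraint. Hence for every $v'\in\R^n$ the perturbed direction $v'_\ep:=v'+\tfrac1\ep w$ satisfies $(\nabla g_i(x))^T v'_\ep>0$ for all $i\in I$ once $\ep>0$ is small, which by a standard argument (the inactive $g_j$ are positive near $x$, the active $g_i$ have strictly positive directional derivative) implies $x+\de v'_\ep\in S(\g)$ for all sufficiently small $\de\ge0$. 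Applying Step~1 to $v'_\ep$ gives $(v'_\ep)^T(\hess p)(x)(v'_\ep)\ge0$; letting $\ep\to0$ and using continuity of the quadratic form $z\mapsto z^T(\hess p)(x)z$ yields $(v')^T(\hess p)(x)v'\ge0$ for \emph{all} $v'\in\R^n$, in particular for $v'=v$. This is exactly $\ph(p)\ge0$.

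I expect the only genuinely delicate point to be the passage from ``$(\nabla g_i(x))^Tv'_\ep>0$ for all $i\in I$'' to ``$x+\de v'_\ep\in S(\g)$ for small $\de\ge0$'': one must note that, for $j\notin I$, $g_j(x)>0$ so $g_j(x+\de v'_\ep)>0$ for small $\de$ by continuity, while for $i\in I$, $g_i(x+\de v'_\ep)=\de(\nabla g_i(x))^Tv'_\ep+O(\de^2)>0$ for small $\de>0$ because the linear term is strictly positive. This is the same mechanism used in the proof of Lemma~\ref{lagrange} (``$u+\de v\in U\cap S(\g)$ but $f(u+\de v)<0$''), so it is routine, but it is where the strict quasiconcavity hypothesis on the active $g_i$ is actually consumed. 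Everything else --- the univariate double-zero argument and the limit $\ep\to0$ --- is elementary. Note that the hypothesis $p\in M(\g)$ is used only through $p\ge0$ on $S(\g)$, and $p\in I_x^2$ only through $p(x)=0$, $\nabla p(x)=0$ (Lemma~\ref{membershipix}); the structure of the quadratic module plays no further role here.
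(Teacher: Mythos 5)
There is a genuine gap, and it is exactly at the point you flag as the delicate step but of a different nature than you anticipate. Your Steps 1--2 correctly establish $(v')^{T}(\hess p)(x)v'\ge 0$ for every $v'$ in the tangent cone $\overline{C}:=\{z\in\R^n\mid (\nabla g_i(x))^Tz\ge 0\text{ for all }i\in I\}$. But the concluding limit argument does not extend this to all of $\R^n$. With $v'_\ep:=v'+\tfrac1\ep w$ we have $\|v'_\ep\|\to\infty$ as $\ep\to 0$, so continuity of $z\mapsto z^T(\hess p)(x)z$ gives nothing about $v'$; rescaling by $\ep$ merely recovers the inequality for $w$. The fix $v'_\ep:=v'+\ep w$ does converge to $v'$, but then $(\nabla g_i(x))^Tv'_\ep\to(\nabla g_i(x))^Tv'$, which can be negative, so $v'_\ep$ need not point into $S(\g)$ for small $\ep$. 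Either way you obtain the conclusion only on $\overline{C}$, and $\overline{C}\cup(-\overline{C})=\R^n$ fails as soon as $|I|\ge 2$.

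This is not a technicality: the statement you are actually proving, namely ``$p\ge 0$ on $S(\g)$ and $p\in I_x^2$ imply $(\hess p)(x)\succeq 0$'', is \emph{false}, and the paper's final Example is a counterexample. There one has $g=1-(X-1)^2-Y^2$, $h=1-X^2-(Y-1)^2$, $x=0$, $p=XY$, which satisfies $p\ge 0$ on $S(g,h)$, $p\in I_0^2$, and all the geometric hypotheses of the Proposition (take $y=(1/2,1/2)$), yet $(\hess p)(0)=\left(\begin{smallmatrix}0&1\\1&0\end{smallmatrix}\right)$ is indefinite; with $v=(1,-1)^T\notin\overline{C}$ one gets $v^T(\hess p)(0)v=-2<0$. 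The whole point of the Proposition and of that Example is to conclude $XY\notin M(g,h)$. Your closing remark that ``the hypothesis $p\in M(\g)$ is used only through $p\ge 0$ on $S(\g)$'' is therefore precisely where the argument goes wrong: the paper's proof crucially exploits the representation $p=\sum_i s_ig_i$ with $s_i\ge 0$ on $\R^n$, using $\nabla p(x)=0$ together with Remark~\ref{gradientpositive} to force $s_i(x)=0$ for all $i$, hence $s_i\in I_x^2$, hence $s_ig_i\in I_x^3$ for $i\in I$ and $\ph(s_ig_i)=g_i(x)\,v^T(\hess s_i)(x)v\ge 0$ for $i\notin I$. That algebraic bookkeeping on the summands of the quadratic-module representation, not the pointwise nonnegativity of $p$ on $S(\g)$, is what makes the Hessian bound hold for \emph{all} $v$.
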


\begin{proof}
Set $g_0:=1\in\R[\x]$ and consider $s_0,\ldots,s_m\in\sum\R[\x]^2$ such that
\[
p:=\sum_{i=0}^m s_ig_i \in I_x^2.
\]
We have to show $\ph(p)\ge0$. We will show this even under the weaker condition that $s_i\ge0$ on $\R^n$ instead of
$s_i\in\sum\R[\x]^2$ for $i\in\{0,\dots,m\}$.
We have \[\sum_{\substack{i=0\\i\notin I}}^m\underbrace{s_i(x)}_{\ge0}\underbrace{g_i(x)}_{>0}=0\]
and therefore $s_i\in I_x^2$ for each $i\in\{0,\dots,m\}\setminus I$. In particular,
\[q:=\sum_{i\in I}s_ig_i\in I_x^2\]
and
\[\ph\left(s_ig_i\right)=\underbrace{g_i(x)}_{>0}v^T\underbrace{(\hess s_i)(x)}_{\succeq0}v\ge0
\]
for all $i\in\{0,\dots,m\}\setminus I$. Hence $\ph(p)\ge\ph(q)$.
We will show that $\ph(q)=0$. In fact, we will even show that $s_ig_i\in I_x^3$ for all $i\in I$.
It suffices to show that $s_i\in I_x^2$ for all $i\in I$.
Setting $v:=y-x$, we have $(\nabla g_i(x))^Tv>0$ for $i\in I$ by Remark \ref{gradientpositive}. Hence
\[\sum_{i\in I}\underbrace{s_i(x)}_{\ge0}\underbrace{(\nabla g_i(x))^Tv}_{>0}=(\nabla q(x))^T=0.\]
This implies $s_i(x)=0$ and thus $s_i\in I_x^2$ and hence $s_ig_i\in I_x^3$ for $i\in I$, using that $s_i\ge0$ on $\R^n$.
\end{proof}

Now we come to the promised example.

\begin{ex}
Consider
\begin{align*}
g&:=1-(X-1)^2-Y^2\in\R[X,Y]\qquad\text{and}\\
h&:=1-X^2-(Y-1)^2\in\R[X,Y]
\end{align*}
One shows easily that $M(g,h)$ is Archimedean (for example by Remark \ref{prestel}). We have $X\ge0$ on the ball $S(g)$ and
$Y\ge0$ on the ball $S(h)$ and thus $XY\ge0$ on $S(g,h)$. The Hessian of both $g$ and $h$ is constantly $-2I_2\prec0$. Hence
$g$ and $h$ are strictly concave on $\R^2$. Set $v:=\left(\begin{smallmatrix}1\\-1\end{smallmatrix}\right)\in\R^2$ and consider
\[\ph\colon\R[\x]\to\R, p\mapsto v^T(\hess p)(x)v.\]
By Proposition \ref{como}, we have $\ph(M(g,h)\cap I_x^2)\subseteq\R_{\ge0}$. On the other hand
$\ph(XY)=-2<0$. Since $XY\in I_0^2$, this shows that $XY\notin M(g,h)$.
\end{ex}

\section*{Acknowledgments}
\noindent
Both authors were supported by the DFG grant SCHW 1723/1-1.

\end{document}